\numberwithin{equation}{section}
\author{Zhenghui Huo, Nathan A. Wagner and Brett D. Wick}
\title[Bekoll\'e-Bonami estimates on some pseudoconvex domains]{Bekoll\'e-Bonami estimates on some pseudoconvex domains}
\begin{document}
	\thanks{BDW's research is partially supported by National Science Foundation grants DMS \# 1560955 and DMS \# 1800057. NAW's research is supported by National Science Foundation grant DGE \#1745038 }
\address{Zhenghui Huo, Department of Mathematics and Statistics, The University of Toledo,  Toledo, OH 43606-3390, USA}
\email{zhenghui.huo@utoledo.edu}
\address{Nathan A. Wagner, Department of Mathematics and Statistics, Washington University in St. Louis,  St. Louis, MO 63130-4899, USA}
\email{nathanawagner@wustl.edu}
\address{Brett D. Wick, Department of Mathematics and Statistics, Washington University in St. Louis,  St. Louis, MO 63130-4899, USA}
\email{bwick@wustl.edu}
		\newtheorem{thm}{Theorem}[section]
	\newtheorem{cl}[thm]{Claim}
	\newtheorem{lem}[thm]{Lemma}
		\newtheorem{ex}[thm]{Example}
	\newtheorem{de}[thm]{Definition}
	\newtheorem{co}[thm]{Corollary}
	\newtheorem*{thm*}{Theorem}
	\theoremstyle{definition}
		\newtheorem{rmk}[thm]{Remark}
		
	\maketitle
\begin{abstract}
We establish a weighted $L^p$ norm estimate for the Bergman projection for a class of pseudoconvex domains.  We obtain  an upper bound for the weighted $L^p$ norm when the domain is, for example, a bounded smooth strictly pseudoconvex domain, a pseudoconvex domain of finite type in $\mathbb C^2$, a convex domain of finite type in $\mathbb C^n$, or a decoupled domain of finite type in $\mathbb C^n$. The upper bound is related to the Bekoll\'e-Bonami constant and is sharp. When the domain is smooth, bounded, and strictly pseudoconvex, we also obtain a lower bound for the weighted norm. As an additional application of the method of proof, we obtain the result that the Bergman projection is weak-type $(1,1)$ on these domains.

\medskip

\noindent
{\bf AMS Classification Numbers}: 32A25, 32A36,  32A50, 42B20, 42B35

\medskip

\noindent
{\bf Key Words}: Bergman projection, Bergman kernel, weighted inequality
\end{abstract}

\section{Introduction}
Let $\Omega\subseteq \mathbb C^n$ be a bounded domain. Let $dV$ denote the Lebesgue measure on $\mathbb C^n$. The Bergman projection $P$ is the orthogonal projection from $L^2(\Omega)$ onto the Bergman space $A^2(\Omega)$, the space of all square-integrable holomorphic functions.
Associated with $P$, there is a unique function $K_\Omega$ on $\Omega\times\Omega$ such that for any $f\in L^2(\Omega)$:
\begin{equation}
P(f)(z)=\int_{\Omega}K_\Omega(z;\bar w)f(w)dV(w).
\end{equation}
Let $P^+$ denote the positive Bergman operator defined by:
\begin{equation}
P^+(f)(z):=\int_{\Omega}|K_\Omega(z;\bar w)|f(w)dV(w).
\end{equation}
A question of importance in analytic function theory and harmonic analysis is to understand the boundedness of $P$ or $P^+$ on the space $L^p(\Omega, \sigma dV)$, where $\sigma$ is some non-negative locally integrable function on $\Omega$.
In \cite{BB78,Bekolle}, Bekoll\'e and Bonami established the following  for $P$ and $P^+$ on the unit ball $\mathbb B_n\subseteq \mathbb C^n$:
\begin{thm}[Bekoll\'e-Bonami]
	Let $T_z$ denote the Carleson tent over $z$ in $\mathbb B_n\in \mathbb C^n$ defined as below:
	\begin{itemize}
		\item $T_z:=\left\{w\in \mathbb B_n:\left|1-\bar w\frac{z}{|z|}\right|<1-|z|\right\}$ for $z\neq 0$, and
		\item $T_z:= \mathbb B_n$ for $z=0$.
	\end{itemize} 	Let the weight $\sigma$ be a positive, locally integrable function on $\mathbb B_n$. Let $1<p<\infty$. Then the following conditions are equivalent:
	\begin{enumerate}[label=\textnormal{(\arabic*)}]
		\item $P:L^p(\mathbb B_n,\sigma)\mapsto L^p(\mathbb B_n,\sigma)$ is bounded;
		\item $P^+:L^p(\mathbb B _n,\sigma)\mapsto L^p(\mathbb B_n,\sigma)$ is bounded;
		\item The Bekoll\'e-Bonami constant $\mathcal B_p(\sigma)$ is finite where: $$\mathcal B_p(\sigma):=\sup_{z\in \mathbb B_n}\frac{\int_{T_z}\sigma(w) dV(w)}{\int_{T_z}dV(w)}\left(\frac{\int_{T_z}\sigma^{-\frac{1}{p-1}} (w)dV(w)}{\int_{T_z}dV(w)}\right)^{p-1}.$$
	\end{enumerate} 
\end{thm}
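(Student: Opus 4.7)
The implication $(2)\Rightarrow(1)$ is immediate from the pointwise bound $|Pf(z)|\leq P^+(|f|)(z)$. For the other two implications I would use a test-function argument for the necessity $(1)\Rightarrow(3)$ and a dyadic reduction for the sufficiency $(3)\Rightarrow(2)$.

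For $(1)\Rightarrow(3)$, the plan is the standard test-function argument. Fix $z\in\mathbb{B}_n\setminus\{0\}$ and apply the hypothesis to $f_z:=\chi_{T_z}\,\sigma^{-1/(p-1)}$, noting that
\[
\|f_z\|_{L^p(\sigma)}^p=\int_{T_z}\sigma^{-1/(p-1)}\,dV.
\]
The geometric input is that for $\zeta,w$ in a suitable ``top half'' of $T_z$ one has $1-\bar\zeta\cdot w$ of modulus and argument both of order $1-|z|$, so that the explicit formula for the ball kernel gives $\operatorname{Re} K_{\mathbb{B}_n}(\zeta,w)\gtrsim V(T_z)^{-1}$, and hence
\[
Pf_z(\zeta)\gtrsim\frac{1}{V(T_z)}\int_{T_z}\sigma^{-1/(p-1)}\,dV
\]
on that subset. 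Raising to the $p$-th power, integrating against $\sigma$, and invoking $\sigma(T_z^{\mathrm{top}})\approx\sigma(T_z)$ (a Whitney covering argument), the boundedness of $P$ on $L^p(\sigma)$ reduces to exactly $\mathcal{B}_p(\sigma)\lesssim\|P\|^p<\infty$.

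For $(3)\Rightarrow(2)$, the plan is to dominate $P^+$ by a positive dyadic operator on a tree of tents. Combining the size estimate $|K_{\mathbb{B}_n}(z,w)|\approx|1-\bar z\cdot w|^{-(n+1)}$ with a Whitney decomposition of $\mathbb{B}_n$ adapted to the nonisotropic geometry on $\partial\mathbb{B}_n$ produces a tree $\mathcal{T}$ of Carleson tents $T$ with tops $\widehat T$ partitioning $\mathbb{B}_n$, together with the pointwise bound
\[
P^+f(z)\lesssim\sum_{T\in\mathcal{T}}\chi_{\widehat T}(z)\,\frac{1}{V(T)}\int_T|f|\,dV.
\]
The $L^p(\sigma)$-boundedness of the dyadic averaging operator on the right under the hypothesis $\mathcal{B}_p(\sigma)<\infty$ would then be established either by Schur's test with an auxiliary weight of the form $h(w)=\sigma(w)^{-1/p}(1-|w|)^{-\epsilon}$ for suitably small $\epsilon>0$, or by transplanting the classical Muckenhoupt $A_p$ theory — including a reverse-Hölder/self-improvement property for $\mathcal{B}_p$ weights — to the tree $\mathcal{T}$. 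The main obstacle is this reduction step: it requires the Whitney tents to form a tree with bounded overlap, and requires the oscillatory phase of $K_{\mathbb{B}_n}$ to be absorbed cleanly into a sum of positive averages, which is the reason one must pass through $P^+$ rather than $P$ itself at this stage.
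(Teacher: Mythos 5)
Your outline has the right architecture, and in fact the paper does not prove this quoted theorem at all (it is cited from Bekoll\'e--Bonami); what the paper proves is a quantitative generalization whose skeleton matches yours: $(2)\Rightarrow(1)$ is trivial, necessity goes by testing on $\chi_T\,\sigma^{-1/(p-1)}$, and sufficiency goes by dominating $P^+$ by positive dyadic averaging operators over tents (Theorem \ref{thm4.6}). However, your proposal has two genuine gaps. First, in $(1)\Rightarrow(3)$ the step ``$\sigma(T_z^{\mathrm{top}})\approx\sigma(T_z)$'' is false for a general positive locally integrable $\sigma$: nothing prevents $\sigma$ from concentrating essentially all of its mass on $T_z\setminus T_z^{\mathrm{top}}$, and no Whitney covering argument can repair a comparison between the $\sigma$-measures of a set and a proper subset. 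The standard repair --- and what the paper does in Section 7 (Lemma \ref{Lem7.1} and the computation around (\ref{3.25})) --- is to use two \emph{separated} tents $T_1,T_2$ of comparable size at mutual distance comparable to their radius, so that the kernel has essentially constant argument on $T_1\times T_2$; one then obtains $\langle\sigma\rangle_{T_2}\left(\langle\nu\rangle_{T_1}\right)^{p-1}\lesssim\|P\|^p$ together with the symmetric estimate, multiplies the two, and uses H\"older's inequality in the form $\langle\sigma\rangle_{T_2}\left(\langle\nu\rangle_{T_2}\right)^{p-1}\geq 1$ to eliminate $T_2$. This costs a square (one gets $\mathcal B_p(\sigma)\lesssim\|P\|^{2p}$ rather than $\|P\|^p$) but gives the required finiteness.

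Second, in $(3)\Rightarrow(2)$ the pointwise dyadic domination is correct and is exactly the content of Theorem \ref{thm4.6}, but both of your proposed ways to bound the resulting positive operator fail. A Schur test with $h(w)=\sigma(w)^{-1/p}(1-|w|)^{-\epsilon}$ needs pointwise control of $\sigma$ that the integral condition $\mathcal B_p(\sigma)<\infty$ does not supply; and Bekoll\'e--Bonami weights notoriously do \emph{not} satisfy a reverse H\"older inequality or the self-improvement property $\mathcal B_p\subseteq\mathcal B_{p-\epsilon}$ (they need not even be doubling), so the Muckenhoupt machinery cannot be transplanted wholesale. The argument that does work, carried out in Section 5, is the sparse-operator bound: replace each tent $\hat K^k_j$ by its kube $K^k_j$ with $V(K^k_j)\approx V(\hat K^k_j)$, use H\"older in the form $V(K^k_j)\le\left(\nu(K^k_j)\right)^{1/p'}\left(\sigma(K^k_j)\right)^{1/p}$ together with the disjointness of the kubes, and control the resulting sums by the weighted dyadic maximal functions $\mathcal M_{\mathcal T,\nu}$ and $\mathcal M_{\mathcal T,\sigma}$, which are bounded on $L^p(\nu)$ and $L^{p'}(\sigma)$ for \emph{every} weight (Lemma \ref{lem3.12}). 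No reverse H\"older inequality or extrapolation enters anywhere.
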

In \cite{HWW}, we generalized  Bekoll\'e and Bonami's result to a wide class of pseudoconvex domains of finite type. To do so, we combined the methods of Bekolle \cite{Bekolle} with McNeal \cite{McNeal2003}.  This method of proof is qualitative, showing that the Bekoll\'e-Bonami class is sufficient for the weighted inequality of the projection to hold on those domains, and also necessary if the domain is strictly pseudoconvex.  However, the method of good-lambda inequalities in \cite{Bekolle} seems unlikely to give optimal estimates for the norm of the Bergman projection.  In this paper, we address the quantitative side of this question using sparse domination techniques.

Motivated by recent developments on the $A_2$-Conjecture by Hyt\"onen \cite{Hytonen} for singular integrals in the setting of Muckenhoupt weighted $L^p$ spaces, people have made progress on the dependence of the operator norm $\|P\|_{L^p(\mathbb B_n,\sigma)}$ on $\mathcal B_{p}(\sigma)$. In \cite{Pott}, Pott and Reguera gave a weighted $L^p$ estimate for the Bergman projection on the upper half plane. Their estimates are in terms of the Bekoll\'e-Bonami constant and the upper bound is sharp. Later, Rahm, Tchoundja, and Wick \cite{Rahm} generalized the results of Pott and Reguera to the unit ball case, and also obtained estimates for the Berezin transform. Weighted norm estimates of the Bergman projection have also been obtained \cite{ZhenghuiWick2} on the Hartogs triangle.

We use the known estimates of the Bergman kernel in \cite{Fefferman,Monvel,NRSW3,McNeal1,McNeal2, McNeal91} to establish the Bekoll\'e-Bonami type estimates for the Bergman projection on some classes of finite type domains. By finite type we mean that the D'Angelo 1-type \cite{D'Angelo82} is finite.  The domains of finite type we focus on are:
\begin{enumerate}
	\item domains of finite type in $\mathbb C^2$;
	\item strictly pseudoconvex domains with smooth boundary in $\mathbb C^n$; 
	\item convex domains of finite type in $\mathbb C^n$;
	\item decoupled domain of finite type in $\mathbb C^n$.
\end{enumerate}

Given functions of several variables $f$ and $g$, we use $f\lesssim g$ to denote that $f\leq Cg$ for a constant $C$. If $f\lesssim g$ and $g\lesssim f$, then we say $f$ is comparable to $g$ and write $f\approx g$. 

The main result obtained in this paper is:
\begin{thm}
	\label{t:main}
	Let $1<p<\infty$, and $p'$ denote the H\"older conjugate to $p$.
	Let $\sigma(z)$ be a positive, locally integrable function on $\Omega$. Set $\nu=\sigma^{-p^\prime/p}(z)$.
	Then the Bergman projection $P$  satisfies the following norm estimate on the weighted space $L^p(\Omega,\sigma)$:
	\begin{equation}\label{1.1}
\|P\|_{L^p(\Omega,\sigma)}\leq \|P^+\|_{L^p(\Omega,\sigma  )}\lesssim  \mathcal [\sigma]_p,
	\end{equation}
	where $[\sigma]_p=\left(\langle\sigma \rangle^{dV}_{\Omega}\left(\langle \nu\rangle^{dV}_{\Omega}\right)^{p-1}\right)^{1/p}+pp^\prime\left(\sup_{\epsilon_0>\delta>0, z\in \mathbf b\Omega}\langle\sigma \rangle^{dV}_{B^\#(z,\delta)}\left(\langle \nu\rangle^{dV}_{B^\#(z,\delta)}\right)^{p-1}\right)^{\max \{1,\frac{1}{p-1}\}}.$
\end{thm}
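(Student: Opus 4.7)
The plan is to adapt the sparse domination approach of Rahm, Tchoundja, and Wick \cite{Rahm} from the unit ball to the four classes of domains listed. The three ingredients will be: pointwise size estimates for $K_\Omega(z,\bar w)$ coming from the references \cite{Fefferman,Monvel,McNeal1,McNeal2,McNeal91,NRSW3}; a dyadic tent decomposition of $\Omega$ adapted to the McNeal pseudo-metric that generates the balls $B^\#(z,\delta)$; and the standard sharp weighted bound for positive sparse operators.

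First, I would split $P^+ = P^+_{\mathrm{int}} + P^+_{\mathrm{bdy}}$ by fixing a defining function $\rho$ and a small tubular neighborhood $\{-\epsilon_0 < \rho < 0\}$ of $\mathbf b\Omega$ on which the kernel estimates hold. On the complement of this strip, $|K_\Omega(z,\bar w)|$ is bounded, so $P^+_{\mathrm{int}}f(z) \lesssim \langle |f|\rangle^{dV}_\Omega$, and a direct computation shows that this contributes the first summand $\bigl(\langle\sigma\rangle^{dV}_\Omega(\langle\nu\rangle^{dV}_\Omega)^{p-1}\bigr)^{1/p}$ to $[\sigma]_p$ via H\"older's inequality. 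The bulk of the work concerns $P^+_{\mathrm{bdy}}$.

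Second, for the boundary piece I would use the size estimate
\[
|K_\Omega(z,\bar w)| \;\lesssim\; \frac{1}{|B^\#(\pi(z),\delta(z,w))|},
\]
where $\pi$ is the normal projection to $\mathbf b\Omega$ and $\delta(z,w)$ is the pseudo-distance adapted to the class of domain in question; for each of the four families, this estimate (or a suitable $\sum$-of-tents refinement of it) is available in the cited works. Combined with the engulfing/doubling properties of the $B^\#$-balls, this reduces $P^+_{\mathrm{bdy}}f(z)$ to a positive, tent-based averaging operator. Using a Lerner-type stopping-time selection on dyadic tents, one then produces a sparse family $\mathcal S$ of balls $B^\#(z_0,\delta)$ with $\delta<\epsilon_0$ so that
\[
P^+_{\mathrm{bdy}}f(z) \;\lesssim\; \sum_{B\in\mathcal S} \chi_B(z)\,\frac{1}{|B|}\int_B |f|\,dV.
\]
Sparseness here comes from the weak-$(1,1)$ boundedness of the associated maximal function on the space of homogeneous type $(\Omega, dV, B^\#)$.

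Third, one invokes the by now standard weighted estimate for sparse averaging operators: by duality and a three-lattice argument,
\[
\Bigl\|\sum_{B\in\mathcal S}\chi_B\,\langle f\rangle^{dV}_B\Bigr\|_{L^p(\sigma)} \;\lesssim\; p p'\,\Bigl(\sup_{B\in\mathcal S}\langle\sigma\rangle^{dV}_B (\langle\nu\rangle^{dV}_B)^{p-1}\Bigr)^{\max\{1,\,1/(p-1)\}}\,\|f\|_{L^p(\sigma)},
\]
which produces the second summand of $[\sigma]_p$. Combining with the interior bound gives \eqref{1.1}. The main obstacle will be step two: the Bergman kernel is not a Calder\'on-Zygmund kernel in any standard sense, so the sparse domination has to be extracted directly from the pseudo-metric geometry. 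Verifying that $(\Omega, B^\#, dV)$ behaves like a space of homogeneous type uniformly across the four families, that a dyadic tent system with the nesting/covering properties required by Lerner's algorithm exists, and that the kernel estimate telescopes cleanly across generations of tents, is where the domain-dependent technicalities concentrate.
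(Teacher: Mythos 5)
Your proposal follows essentially the same route as the paper: pointwise domination of $|K_\Omega|$ by tent averages (a global term plus boundary terms), the global term yielding the first summand of $[\sigma]_p$ via H\"older, and the standard Moen--Lacey sparse-operator argument (with duality for $1<p<2$) yielding the $pp'(\cdots)^{\max\{1,1/(p-1)\}}$ term. The one place where you overcomplicate matters is the extraction of sparseness in step two. No Lerner-type stopping time is needed, and sparseness does not come from the weak-$(1,1)$ bound of the maximal function: the paper builds Hyt\"onen--Kairema dyadic systems $\mathcal Q_l$ on $\mathbf b\Omega$, lifts them to dyadic tents $\hat K^k_j$, and defines kubes $K^k_j=\hat K^k_j\setminus\bigcup_i\hat K^{k+1}_i$; choosing the scaling parameter $s$ large enough forces $V(K^k_j)\approx V(\hat K^k_j)$ with the $K^k_j$ pairwise disjoint, so the \emph{entire} dyadic tent family is already a Carleson (sparse) family and the kernel estimate $|K_\Omega(p,\bar q)|\lesssim (V(B^\#(\pi(p),r)))^{-1}$ immediately gives the pointwise bound $|K_\Omega(p,\bar q)|\lesssim (V(\Omega))^{-1}+\sum_l\sum_{\hat K^k_j\in\mathcal T_l}(V(\hat K^k_j))^{-1}1_{\hat K^k_j\times\hat K^k_j}(p,q)$, using finitely many adjacent systems in place of your three-lattice trick. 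The weighted maximal function enters only afterwards, in the Carleson-embedding step $\sum_j(\langle f\rangle^{\nu dV}_{\hat K^k_j})^p\nu(K^k_j)\leq\int(\mathcal M_{\mathcal T_l,\nu}f)^p\nu\,dV\leq (p')^p\|f\|^p_{L^p(\nu)}$, which is where the factor $pp'$ is tracked. So the domain-dependent technicalities you correctly anticipate are resolved by the kube construction rather than by a stopping-time algorithm; otherwise your outline matches the paper's proof.
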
	
The tent $B^\#(z,\delta)$ above is slightly different from the tent we use in \cite{HWW} in order to fit in the machinery of dyadic harmonic analysis. These two tents are essentially equivalent. The construction of $B^\#(z,\delta)$ uses the existence of the projection map onto $\mathbf b\Omega$ which is defined in a small tubular neighborhood of $\mathbf b\Omega$. Hence the restriction $\delta<\epsilon_0$ is needed to make sure that $B^\#(z,\delta)$ is inside the tubular neighborhood. See Lemma \ref{lem3.2} and Definition \ref{de3.3} in Section 3. For the detailed definition of the constant $[\sigma]_p$ and its connection with the Bekoll\'e-Bonami constant $\mathcal B_p(\sigma)$, see Definition \ref{de3.4} and Remark \ref{Re3.5}. We provide a sharp example for the upper bound above. See Section 6. 

Using the asymptotic expansion of the Bergman kernel on a strictly pseudoconvex domain \cite{Fefferman, Monvel}, we showed in \cite{HWW} that when $\Omega$ is smooth, bounded, and strictly pseudoconvex, the boundedness of the Bergman projection $P$ on the weighted space $L^p(\Omega,\sigma)$ implies that the weight $\sigma$ is in the $\mathcal B_p$ class. Here we also provide the corresponding quantitative result, giving a  lower bound of  the weighted norm of $P$:
\begin{thm}	\label{t:main1}
Let $\Omega$ be a smooth, bounded, strictly pseudoconvex domain. Let $1<p<\infty$, and $p'$ denote the H\"older conjugate to $p$. 
Let $\sigma(z)$ be a positive, locally integrable function on $\Omega$. Set $\nu=\sigma^{-p^\prime/p}(z)$. Suppose the projection $P$ is bounded on $L^p(\Omega,\sigma)$. Then we have
\begin{equation}\label{1.2}
\left(\sup_{\epsilon_0>\delta>0, z\in \mathbf b\Omega}\langle\sigma \rangle^{dV}_{B^\#(z,\delta)}\left(\langle \nu\rangle^{dV}_{B^\#(z,\delta)}\right)^{p-1}\right)^{\frac{1}{2p}}\lesssim\|P\|_{L^p(\Omega,\sigma)}.
\end{equation}
If $\Omega$ is also Reinhardt, then 
\begin{equation}\label{1.3}
\left(\mathcal B_p(\sigma)\right)^{\frac{1}{2p}}\lesssim\|P\|_{L^p(\Omega,\sigma)},
\end{equation}
where $\mathcal B_p(\sigma)=\max{\left\{\langle\sigma \rangle^{dV}_{\Omega}\left(\langle \nu\rangle^{dV}_{\Omega}\right)^{p-1}, \;\;\sup_{\epsilon_0>\delta>0, z\in \mathbf b\Omega}\langle\sigma \rangle^{dV}_{B^\#(z,\delta)}\left(\langle \nu\rangle^{dV}_{B^\#(z,\delta)}\right)^{p-1}\right\}}.$
\end{thm}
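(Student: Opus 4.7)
The plan is to lower-bound $\|P\|_{L^p(\Omega,\sigma)}$ by a testing argument adapted to the tent geometry: we evaluate $P$ against a function concentrated on a Carleson tent $B^\#(\zeta,\delta)$, pair against a matching dual test, and use the Fefferman/Boutet de Monvel asymptotic expansion of the Bergman kernel to extract a pointwise lower bound for $K_\Omega$ on the tent.

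First I would localize the kernel. Fix $\zeta\in\mathbf{b}\Omega$ and $0<\delta<\epsilon_0$, and let $z_\delta=\zeta-\delta\mathbf{n}_\zeta$ be the inner-normal point at distance $\delta$ from $\zeta$. Strict pseudoconvexity supplies the expansion $K_\Omega(z,\bar w)=F(z,\bar w)\Phi(z,\bar w)^{-(n+1)}+L(z,\bar w)\log(-\Phi(z,\bar w))$ with $F(\zeta,\bar\zeta)>0$ and $-\Phi(z,\bar w)\asymp\delta$ on a non-isotropic neighborhood of $z_\delta$ of size $\delta$. By continuity of $F$ and $\Phi$, one can locate a subset $\widetilde B\subset B^\#(\zeta,\delta)$ with $|\widetilde B|\asymp|B^\#(\zeta,\delta)|$ and a unimodular constant $\lambda$ so that $\mathrm{Re}(\lambda K_\Omega(z,\bar w))\gtrsim |B^\#(\zeta,\delta)|^{-1}$ for all $z,w\in\widetilde B$ (the logarithmic correction is dominated by the leading singular term).

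Next I would test. Take $f=\chi_{\widetilde B}\,\nu$ and $g=\chi_{\widetilde B}\,\sigma$. Since $\nu=\sigma^{-p'/p}$, elementary bookkeeping gives $\|f\|_{L^p(\Omega,\sigma)}^{p}=\int_{\widetilde B}\nu\,dV$ and $\|g\|_{L^{p'}(\Omega,\nu)}^{p'}=\int_{\widetilde B}\sigma\,dV$. The kernel bound yields
\begin{equation*}
|\langle Pf,g\rangle_{dV}|\geq|\mathrm{Re}(\lambda\langle Pf,g\rangle_{dV})|\gtrsim\frac{1}{|B^\#(\zeta,\delta)|}\int_{\widetilde B}\nu\,dV\int_{\widetilde B}\sigma\,dV,
\end{equation*}
which combined with the weighted duality bound $|\langle Pf,g\rangle_{dV}|\leq\|P\|_{L^p(\sigma)}\|f\|_{L^p(\sigma)}\|g\|_{L^{p'}(\nu)}$ controls $\langle\sigma\rangle_{\widetilde B}^{dV}\bigl(\langle\nu\rangle_{\widetilde B}^{dV}\bigr)^{p-1}$ by a power of $\|P\|_{L^p(\sigma)}$. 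Transferring the estimate from $\widetilde B$ to the full tent $B^\#(\zeta,\delta)$ — which amounts to controlling the ratios $\sigma(B)/\sigma(\widetilde B)$ and $\nu(B)/\nu(\widetilde B)$ without doubling — is handled by Hölder's inequality, and this transfer is what produces the weakened exponent $1/(2p)$ in \eqref{1.2} (in contrast to the exponent $1/p$ that the testing alone would give on $\widetilde B$). For the Reinhardt assertion \eqref{1.3} one must additionally dominate the interior term $\langle\sigma\rangle_\Omega^{dV}\bigl(\langle\nu\rangle_\Omega^{dV}\bigr)^{p-1}$. Here the $n$-torus action preserves $\Omega$ and commutes with $P$, so averaging $\sigma$ over the torus yields a torus-invariant weight $\widetilde\sigma$ with $\|P\|_{L^p(\widetilde\sigma)}\leq\|P\|_{L^p(\sigma)}$; for such weights the interior average is comparable to a boundary tent average at scale $\delta\asymp\mathrm{diam}(\Omega)$, at which point \eqref{1.2} closes the argument.

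The main obstacle is the first step — producing a subset $\widetilde B$ of measure comparable to $|B^\#(\zeta,\delta)|$ on which the phase of $K_\Omega$ is controlled. Off the diagonal, $\arg\Phi^{-(n+1)}$ rotates, so $\widetilde B$ must be an anisotropic (parabolic) neighborhood of $z_\delta$ adapted to the Levi geometry of $\mathbf{b}\Omega$: small enough to freeze $F$ and $\arg\Phi$ within a fixed aperture, yet large enough that $|\widetilde B|\asymp|B^\#(\zeta,\delta)|$ so that the bilinear pairing captures $\sigma$- and $\nu$-mass of the correct order. Uniformity in $\zeta\in\mathbf{b}\Omega$ is supplied by smoothness and compactness of the boundary.
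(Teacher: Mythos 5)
There is a genuine gap in both halves of your argument. For \eqref{1.2}: your testing pair $f=\chi_{\widetilde B}\nu$, $g=\chi_{\widetilde B}\sigma$ only ever sees the weight on the shrunken set $\widetilde B$, so the estimate you extract is $\bigl(\sigma(\widetilde B)/V(B)\bigr)^{1/p}\bigl(\nu(\widetilde B)/V(B)\bigr)^{1/p'}\lesssim\|P\|$, and no application of H\"older's inequality can convert this into a bound on $\langle\sigma\rangle^{dV}_{B^\#(z,\delta)}\bigl(\langle\nu\rangle^{dV}_{B^\#(z,\delta)}\bigr)^{p-1}$: take $\sigma\equiv 1$ on $\widetilde B$ and $\sigma\equiv M$ on $B^\#(z,\delta)\setminus\widetilde B$ with $M$ huge; the $\widetilde B$-quantity stays $O(1)$ while the full-tent quantity is $\approx M$. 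H\"older only gives the lower bound $V(E)^p\le\sigma(E)\nu(E)^{p-1}$, never an upper bound on $\sigma(B)$ in terms of $\sigma(\widetilde B)$, so the "transfer" step that is supposed to produce the exponent $1/(2p)$ does not exist. The paper avoids this entirely by testing \emph{off the diagonal}: Lemma \ref{Lem7.1} produces a second full tent $B^\#(z_2,\delta)$ separated from $B^\#(z_1,\delta)$ by distance $\approx\delta$ on which $|PM_\nu 1_{B^\#(z_1,\delta)}|\gtrsim\langle\nu\rangle^{dV}_{B^\#(z_1,\delta)}$ (the separation is what controls the phase of $K_\Omega$ without shrinking either tent). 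The weak-type $(p,p)$ bound then gives the mixed estimate $\langle\sigma\rangle^{dV}_{B^\#(z_2,\delta)}\bigl(\langle\nu\rangle^{dV}_{B^\#(z_1,\delta)}\bigr)^{p-1}\lesssim\mathcal A^p$ with \emph{full-tent} averages; symmetrizing in $z_1,z_2$, multiplying the two estimates, and using $\langle\sigma\rangle^{dV}_{B^\#(z_2,\delta)}\bigl(\langle\nu\rangle^{dV}_{B^\#(z_2,\delta)}\bigr)^{p-1}\gtrsim1$ is what yields $\mathcal A^{2p}$. Your construction of $\widetilde B$ with controlled $\arg K_\Omega$ is itself plausible (a slab at depth $\asymp\delta$ with tangential radius $c\delta^{1/2}$ does keep $\arg\Phi^{-(n+1)}$ in a small sector while retaining comparable volume), but it is aimed at the wrong pairing.

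The Reinhardt part has a similar problem. Torus-averaging does give $\|P\|_{L^p(\widetilde\sigma)}\le\|P\|_{L^p(\sigma)}$, but it does not help: the dual weight of $\widetilde\sigma$ is $\widetilde\sigma^{1-p'}$, and by Jensen (the map $t\mapsto t^{1-p'}$ is convex) one has $\widetilde\sigma^{1-p'}\le\widetilde{\nu}$, so $\mathcal B_p(\widetilde\sigma)\le\mathcal B_p(\sigma)$ --- the inequality points the wrong way for deducing anything about $\sigma$ from $\widetilde\sigma$. Moreover, the claim that for invariant weights the interior average is comparable to a tent average at scale $\mathrm{diam}(\Omega)$ is false (consider $\sigma=|z|^{-\alpha}$, whose mass concentrates at the origin, invisible to boundary tents). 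What the paper actually uses is that the Reinhardt hypothesis forces $K_\Omega(z;0)=\|1\|_{L^2(\Omega)}^{-2}$ to be a nonzero constant, hence $|K_\Omega(z;\bar w)|\approx1$ with controlled argument for all $z\in\Omega$ and $w$ in a fixed neighborhood $U$ of $0$; testing the pair $(\Omega,U)$ and $(U,\Omega)$, multiplying, and using $\langle\sigma\rangle^{dV}_U\bigl(\langle\nu\rangle^{dV}_U\bigr)^{p-1}\ge1$ gives $\langle\sigma\rangle^{dV}_{\Omega}\bigl(\langle\nu\rangle^{dV}_{\Omega}\bigr)^{p-1}\lesssim\mathcal A^{2p}$. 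You would need to replace both of your central steps by arguments of this two-set type for the proof to close.
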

When $\Omega$ is the unit ball in $\mathbb C^n$, the estimate (\ref{1.3}) was obtained in \cite{Rahm}. When $\Omega$ is smooth, bounded, and strictly pseudoconvex, it was proven in \cite{HWW} that if  $P$ is bounded on $L^p(\Omega,\sigma)$, then the constant $\mathcal B_p(\sigma)$ is finite. It remains unclear to us that, for a general strictly pseudoconvex domain $\Omega$, how $\|P\|_{L^p(\Omega,\sigma)}$ dominates the constant $\langle\sigma \rangle^{dV}_{\Omega}\left(\langle \nu\rangle^{dV}_{\Omega}\right)^{p-1}$.

The approach we employ in this paper is similar to the ones in \cite{Pott} and \cite{Rahm}. To prove (\ref{1.1}), we show that $P$ and $P^+$ are controlled by a positive dyadic operator. Then an analysis of the weighted $L^p$ norm of the dyadic operator yields the desired estimate. The construction of the dyadic operator uses a doubling quasi-metric on the boundary $\mathbf b\Omega$ of the domain $\Omega$ and a result of Hyt\"onen and Kairema \cite{HK}. For the domains we consider, estimates of the Bergman kernel function in terms of those quasi-metrics are known so that a domination of the Bergman projection by the dyadic operator is possible. There are other domains where estimates for the Bergman kernel function are known. We just focus on the above four cases and do not attempt to obtain the most general result. 

The paper is organized as follows: In Section 2, we recall the definitions and known results concerning the non-isotropic metrics and balls on the boundary of the domain. In Section 3, we give the definition of tents and the dyadic tents structure based on the non-isotropic balls in Section 2. In Section 4, we recall known estimates for the Bergman kernel function,  and prove a pointwise domination of the (positive) Bergman kernel function by a positive dyadic kernel.  In Section 5, we prove Theorem \ref{t:main}. In Section 6, we provide a sharp example for the upper bound in Theorem \ref{t:main}. In Section 7, we prove Theorem \ref{t:main1}. In section 8, we provide an additional (unweighted) application of the pointwise dyadic domination to show the Bergman projection is weak-type $(1,1)$. We point out some directions for generalization in Section 9.
\section*{Acknowledgment}
B. D. Wick's research is partially supported by National Science Foundation grants DMS \# 1560955 and DMS \# 1800057.  N. A. Wagner's research is partially supported by National Science Foundation grant DGE \#1745038.
We would like thank John D'Angelo, Siqi Fu, Ming Xiao, and Yuan Yuan for their suggestions and comments. We would also like to acknowledge the work of Chun Gan, Bingyang Hu,
and Ilyas Khan, who independently obtained a result similar to main one in this paper
at around the same time (see \cite{Hu}).
\section{Non-isotropic balls on the boundary}
In this section, we recall various definitions of quasi-metrics and their associated balls on the boundary of $\Omega$. When $\Omega$ is of finite type in $\mathbb C^2$ or strictly pseudoconvex in $\mathbb C^n$, distances on the boundary can be well described using sub-Riemannian geometry. Properties and equivalence of these distances can be found in \cite{NSW1,NSW,Nagel,BaloghBonk}.  For discussions about the sub-Riemannian geometry, see for example \cite{Bellaiche,Gromov1,Montgomery}.  

For convex or decoupled domains of finite type in $\mathbb C^n$, the boundary geometry could be more complicated. We use quasi-metrics in \cite{McNeal2,McNeal3,McNeal91}. In fact, all four classes of domains we consider in this paper can be referred to as the so-called ``simple domains'' in \cite{McNeal2003}. There it has been shown that estimates for the kernel function on these domains fall into a unified framework. When $\Omega$ is of finite type in $\mathbb C^2$ or strictly pseudoconvex in $\mathbb C^n$, the boundary geometry of $\Omega$ is relatively straightforward. The quasi-metric induced by special coordinates systems in \cite{McNeal1} and \cite{McNeal2003}  is essentially the same as the sub-Riemannian metric.

It is worth mentioning that estimates expressed using some quasi-metrics for the Bergman kernel function are known in other settings. See for example \cite{CD,Koenig,Raich}.
\subsection{Balls on the boundaries of domains of finite type in $\mathbb C^2$ or strictly pseudoconvex domains in $\mathbb C^n$} Let $\Omega$ be a bounded domain in $\mathbb C^n$ with $C^\infty$-smooth boundary. A defining function $\rho$ of $\Omega$ is a real-valued $C^\infty$ function on $\mathbb C^n$ with the following properties:
\begin{enumerate}
	\item $\rho(z)<0$ for all $z\in \Omega$ and $\rho(z)>0$ for all $z\notin \Omega$.
	\item $\partial \rho(z)\neq 0$ when $\rho(z)=0$.
\end{enumerate}
Such a $\rho$ can be constructed, for instance, using the Euclidean distance between the point $z$ and $\mathbf b\Omega$, the boundary of $\Omega$. One can also normalize $\rho$ so that $|\partial \rho|=1$. Let $T(\mathbf b\Omega)$ denote the tangent bundle of $\mathbf b\Omega$ and $\mathbb CT(\mathbf b\Omega)=T(\mathbf b\Omega)\otimes\mathbb C$ its complexification. Let $T^{1,0}(\mathbf b\Omega)$ denote the subbundle of $\mathbb CT(\mathbf b\Omega)$ whose sections are linear combinations of ${\partial}/{\partial z_j}$, and $T^{0,1}(\mathbf b\Omega)$ be its complex conjugate bundle. Their sum $H\mathbf (\mathbf b\Omega):=T^{1,0}(\mathbf b\Omega)+T^{0,1}(\mathbf b\Omega)$ is a bundle of real codimension one in the complex tangent bundle $\mathbb CT(\mathbf b\Omega)$. Let $\langle\cdot,\cdot\rangle$ denote the contraction of the one forms and vector fields, and let $[\cdot,\cdot]$ denote the Lie bracket of two vector fields.
Let $\lambda$ denote the Levi form, the Hermitian form on $T^{1,0}(\mathbf b\Omega)$ defined by 
$$\lambda (L,\bar K)=\langle \frac{1}{2}(\partial-\bar \partial)\rho,[L,\bar K]\rangle \;\;\;\text{ for }\;\;L,K\in T^{1,0}(\mathbf b\Omega).$$ By the Cartan formula for the exterior derivative of a one form, one obtains
\begin{align*}
\lambda(L,\bar K)=\left\langle -d \left(\frac{1}{2}(\partial-\bar \partial)\rho\right),L\wedge\bar K\right\rangle=\langle\partial\bar \partial\rho, L\wedge\bar K\rangle.
\end{align*}
Hence, the Levi form is the complex Hessian $(\rho_{i\bar j})$ of $\rho$, restricted to $T^{1,0}(\mathbf b\Omega)$.

The domain $\Omega$ is called pseudoconvex (resp. strictly pseudoconvex) if $\lambda$ is positive semidefinite (resp. definite), i.e., the complex Hessian $(\rho_{i\bar j})$ is positive semidefinite (resp. definite) when restricted to $T^{1,0}(\mathbf b\Omega)$.

Given $L\in T^{1,0}(\mathbf b\Omega)$, we say the type of $L$ at a point $p\in \mathbf b\Omega$ is $k$ and write $\text{Type}_pL=k$ if 
$k$ is the smallest integer such that there is a iterated commutator 
$$[\dots[[L_1,L_2],L_3],\dots,L_k]=\Psi_k,$$ 
where each $L_j$ is either $L$ or $\bar L$ such that $\langle \Psi_k,(\partial-\bar \partial)\rho\rangle\neq 0$. 

When $\Omega\subseteq \mathbb C^2$, the subbundle $T^{1,0}(\mathbf b\Omega)$ has dimension one at each boundary point $p$ and $\text{Type}_pL$ defines the type of the point $p$: A point $q\in \mathbf b\Omega$ is of finite type $m$ in the sense of Kohn \cite{Kohn} if
$\text{Type}_pL=m$. A domain is of Kohn finite type $m$ if every point $q\in \mathbf b\Omega$ is of Kohn finite type at most $m$. In the $\mathbb C^2$ case, Kohn's type and D'Angelo's 1-type are equivalent. See \cite{D'Angelo} for the proof.

When $\Omega$ is strictly pseudoconvex, the Levi form $\lambda$ is positive definite. Thus for every $L\in T^{1,0}(\mathbf b\Omega)$ and $p \in \mathbf b\Omega$, one has that $\text{Type}_pL=2$.

Using the defining function $\rho$, a local basis of $H (\mathbf b\Omega)$ can be chosen as follows. Let $p\in \mathbf b\Omega$ be a boundary point. We may assume that, after a unitary rotation, ${\partial}\rho(p)= dz_n$. Then there is a neighborhood $U$ of $p$ such that $\frac{\partial\rho}{\partial z_n}\neq0$ on $U$. We define $n-1$ local tangent vector fields on $\mathbf b\Omega\cap U$:
\[L_j=\rho_{z_n}\frac{\partial}{\partial z_j}-{\rho_{z_j}}{}\frac{\partial}{\partial z_n}\;\;\;\;\; \;\;\;\;\; j=1,2,3\dots, n-1;\]
and their conjugates:
\[\bar L_j=\rho_{\bar z_n}\frac{\partial}{\partial \bar z_j}-{\rho_{\bar z_j}}\frac{\partial}{\partial \bar z_n}\;\; \;\;\;\;\;\;\;\;j=1,2,3\dots, n-1.\]
Then the $L_j$'s span $T^{1,0}(\mathbf b\Omega)$ and the $\bar L_j$'s span $T^{0,1}(\mathbf b\Omega)$.  
We set \[S=\sum_{j=1}^{n}{\rho_{\bar z_j}}\frac{\partial }{\partial z_j}\;\;\; \text{ and } \;\; \;T=S-\bar S.\]
Then the $L_j$'s, $\bar L_j$'s and $T$ span $\mathbb CT(\mathbf b\Omega)$. 
Let $X_j$, $X_{n-1+j}$ be real vector fields such that \[L_j=X_j-iX_{n-1+j}\] for $j=1,\dots, n-1$. Then $X_j$'s and $T$ span the real tangent space of  $\mathbf b\Omega$ near the point $p$.

For every $k$-tuple of integers $l^{(k)}=(l_1,\dots,l_k)$ with $k\geq 2$ and $l_j\in\{1,\dots, 2n-2\}$, we define $\lambda_{l^{(k)}}$ to be the smooth function such that
\[[X_{l_k},[X_{l_{k-1}},[\dots[X_{l_2},X_{l_1}]\dots]]]=\lambda_{l^{(k)}}T\;\;\text{ mod }\;X_1,\dots, X_{2n-2},\]
 and define $\Lambda_k$ to be the smooth function 
 \[\Lambda_{k}(q)=\left(\sum_{\text{all }l^{(k)}}|\lambda_{l^{(k)}}(q)|^2\right)^{{1}/{2}}.\]
For $q\in U$ and $\delta>0$, we set \begin{align}\label{2.2}\Lambda(q,\delta)=\sum_{j=2}^{m}\Lambda_j(q)\delta^j.\end{align}
In the $\mathbb C^2$ case, a point $q\in \mathbf b\Omega$ is of finite type $m$ if and only if $\Lambda_2(q)=\cdots=\Lambda_{m-1}(q)=0$ but $\Lambda_m(q)\neq 0$.  When $\Omega$ is strictly pseudoconvex in $\mathbb C^n$, the function $\Lambda_2\neq 0$ on $\mathbf b\Omega$.

 Though the function $\Lambda$ is locally defined, one can construct a global $\Lambda$ that is defined on $\mathbf b{\Omega}$ and is comparable to its every local piece. In the  finite type in $\mathbb C^2$ case and the strictly pseudoconvex case, a global construction can be realized without using partitions of unity. We explain this now. When $\Omega$ is strictly pseudoconvex, $\Lambda_2$ does not vanish on the boundary of $\Omega$, therefore we can simply set $\Lambda(q,\delta)=\delta^2$. When $\Omega$ is of finite type in $\mathbb C^2$, global tangent vector fields $L_1$ and $S$ can be chosen on $\mathbf b\Omega$:
 \begin{align*}
 &L_1={\rho_{z_2}}\frac{\partial}{\partial z_1}-{\rho_{z_1}}\frac{\partial}{\partial z_2},\\
 &S={\rho_{\bar z_1}}\frac{\partial}{\partial z_1}+{\rho_{\bar z_2}}\frac{\partial}{\partial z_2}.
 \end{align*}
Then the function $\Lambda$ induced by the above $L_1$ and $S$ is  a smooth function defined on $\mathbf b\Omega$. From now on,  we choose $\Lambda$ to be the smooth function induced by $L_1$ and $S$ on $\mathbf b\Omega$ when $\Omega$ is of finite type in $\mathbb C^2$, and choose $\Lambda(q,\delta)=\delta^2$ when $\Omega$ is strictly pseudoconvex in $\mathbb C^n$.

We recall several non-isotropic metrics on $\mathbf b\Omega$ that are locally equivalent:
\begin{de}
	For $p,q\in \mathbf b\Omega$, the metric $d_{1}(\cdot,\cdot)$ is defined by:
	\begin{align}
	d_{1}(p,q)=\inf\Big\{& \int_{0}^{1}|\alpha^\prime(t)|dt: \alpha \text{ is any piecewise smooth map from }[0,1] \text{ to } \mathbf b \Omega\nonumber\\&\text{ with } \alpha(0)=p, \alpha(1)=q, \text{ and } \alpha^\prime(t)\in H_{\alpha(t)}(\mathbf b\Omega)\Big\}.
	\end{align}
Equipped with the metric $d_1$, we define the ball $B_1$ centered at $p\in \mathbf b\Omega$ of radius $r$ to be \begin{align}B_{1}(p,r)=\{q\in \mathbf b\Omega:d_{1}(p,q)<r\}.\end{align}
\end{de}

 \begin{de}
 	For $p,q\in \mathbf b\Omega$, the metric $d_{2}(\cdot,\cdot)$ is defined by:
 	\begin{align}
 	d_{2}(p,q)=\inf\Big\{& \delta: \text{ There is a piecewise smooth map } \alpha \text{ from } [0,1] \text{ to } \mathbf b \Omega\nonumber\\&\text{ with } \alpha(0)=p, \alpha(1)=q, \alpha^\prime(t)=\sum_{j=1}^{2n-2}a_j(t)X_j(\alpha(t)),\text{ and } |a_j(t)|<\delta\Big\}.
 	\end{align}
 Equipped with the metric $d_2$, we define the ball $B_2$ centered at $p\in \mathbf b\Omega$ of radius $r$ to be \begin{align}B_{2}(p,r)=\{q\in \mathbf b\Omega:d_{2}(p,q)<r\}.\end{align}
 \end{de}
\begin{de}
	For $p,q\in \mathbf b\Omega$, the metric $d_{3}(\cdot,\cdot)$ is defined by:
	\begin{align}
	d_{3}(p,q)=\inf\Big\{& \delta: \text{ There is a piecewise smooth map } \alpha \text{ from } [0,1] \text{ to } \mathbf b \Omega \text{ with }\nonumber\\& \alpha(0)=p,  \alpha(1)=q, \text{ and } \alpha^\prime(t)=\sum_{j=1}^{2n-2}a_j(t)X_j(\alpha(t))+b(t)T(\alpha(t)),\nonumber\\&\text{where } |a_j(t)|<\delta, |b(t)|<\Lambda(p,\delta)\Big\}.
	\end{align}
Equipped with the metric $d_3$, we define the ball $B_3$ centered at $p\in \mathbf b\Omega$ of radius $r$ to be \begin{align}B_{3}(p,r)=\{q\in \mathbf b\Omega:d_{3}(p,q)<r\}.\end{align}
\end{de}
It is known that when the domain is strictly pseudoconvex in $\mathbb C^n$, or of finite type in $\mathbb C^2$, the quasi-metrics $d_1$, $d_2$, and $d_3$ are locally equivalent (cf. \cite{NSW1,NSW,Nagel}), i.e. there are positive constants $C_1$, $C_2$ and $\delta$ so that when $d_i(p,q)<\delta$ with $i\in\{1,2,3\}$, \[C_1d_j(p,q)<d_i(p,q)<C_2d_j(p,q) \;\;\;\text{ for } j\in\{1,2,3\}.\]
As a consequence, the balls $B_j$ are also equivalent in the sense that for small $\delta$, there are positive constants $C_1$, $C_2$ such that \[B_i(p,C_1\delta)\subseteq B_j(p,\delta)\subseteq B_i(p,C_2\delta) \;\;\;\text{ for } i,j\in\{1,2,3\}.\]

It is worth noting that the definition $d_1(\cdot,\cdot)$ does not depend on how we choose the local vector fields. Moreover,  if $d_1(p,q)<\delta$, then for some positive constants $C_1,C_2$, \begin{align}\label{2.81}C_1\Lambda(p,\delta)\leq\Lambda(q,\delta)\leq C_2\Lambda(p,\delta).\end{align}
To introduce the Ball-Box Theorem below, we also need to define balls using the exponential map.
\begin{de}
	For $q\in \mathbf b\Omega$ and $\delta>0$, set
	\[B_4(q,\delta)=\left\{p\in \mathbf b\Omega:p=\exp\left(\sum_{j=1}^{2n-2}a_jX_j(q)+bT(q)\right), \text{ where }|a_j|<\delta, \text{ and }|b|<\Lambda(p,\delta)\right\}.\]
\end{de}
\begin{thm}[Ball-Box Theorem] \label{thm 2.5}
There exist positive constants $C_1, C_2$ such that for any $q\in\mathbf b\Omega$ and any sufficiently small $\delta>0$, \[B_j(q,C_1\delta)\subseteq B_4(q,\delta)\subseteq B_j(q,C_2\delta) \;\;\text{ for } j\in\{1,2,3\} .\] 
\end{thm}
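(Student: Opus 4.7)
The plan is to invoke the local equivalence of $B_1$, $B_2$, $B_3$ recorded just before this theorem to reduce the statement to comparing $B_4(q,\delta)$ with a single path-based ball. I choose $j=3$ because the coefficient bounds $|a_j|<\delta$ and $|b(t)|<\Lambda(p,\delta)$ defining $d_3$ mirror the anisotropic box in Definition 2.4 most closely. The two inclusions have different flavors and are treated separately.

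For the upper inclusion $B_4(q,\delta)\subseteq B_3(q,C_2\delta)$, given $p=\exp(Y)(q)$ with $Y=\sum_{j=1}^{2n-2}a_jX_j+bT$, $|a_j|<\delta$ and $|b|<\Lambda(p,\delta)$, the integral curve $\gamma(t)=\exp(tY)(q)$, $t\in[0,1]$, is by construction an admissible path for $d_3$: its velocity at every point is $Y$ itself, so the $X_j$- and $T$-coefficients are the constants $a_j$ and $b$. A short bootstrap using a crude Euclidean estimate shows that $\gamma(t)$ stays within $d_1$-distance $O(\delta)$ of $q$ for all $t\in[0,1]$ when $\delta$ is small, so (\ref{2.81}) applies along $\gamma$ to replace $\Lambda(p,\delta)$ by a constant multiple of $\Lambda(q,\delta)$, finishing this inclusion.

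The substantive work is the reverse inclusion $B_3(q,C_1\delta)\subseteq B_4(q,\delta)$. My plan is to take an admissible $d_3$-path $\alpha:[0,1]\to\mathbf b\Omega$ from $q$ to $p$, partition $[0,1]$ into $N$ equal subintervals, freeze the coefficients $a_j(t)$ and $b(t)$ at the start $t_i$ of each piece to obtain constant-coefficient vector fields $Y_i=\sum_j a_j(t_i)X_j+b(t_i)T$, and approximate $\alpha$ on the $i$-th subinterval by the time-$(1/N)$ flow of $Y_i$. The composition of these $N$ short flows converges to $p$ as $N\to\infty$ and, by the Baker-Campbell-Hausdorff formula, equals $\exp(Z)$ for a single vector field $Z$. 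The $X_j$-coefficients of $Z$ are controlled by $\int_0^1 a_j(t)\,dt$ and bounded by $\delta$, while each iterated bracket $[X_{l_k},[\cdots[X_{l_2},X_{l_1}]\cdots]]=\lambda_{l^{(k)}}T$ modulo $X_1,\dots,X_{2n-2}$ entering the BCH expansion contributes a $T$-coefficient of order $|\lambda_{l^{(k)}}(q)|\delta^k$. Summing over all iterated brackets yields a total $T$-coefficient in $Z$ of size at most $\sum_k\Lambda_k(q)\delta^k=\Lambda(q,\delta)$, and (\ref{2.81}) reexpresses this as a constant multiple of $\Lambda(p,\delta)$, exactly what the box defining $B_4$ allows.

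The main obstacle will be that $\mathbf b\Omega$ carries no global Lie-group structure, so the BCH series does not terminate and both the infinite bracket tail and the piecewise-freezing approximation error must be absorbed into the slack built into the box widths. Truncating BCH at bracket length $m$ is justified by the finite-type hypothesis, which forces all brackets of length beyond $m$ to re-enter the horizontal distribution modulo $T$ with bounded structure functions; the brackets involving $T$ (such as $[X_j,T]$) similarly contribute only smaller-order terms because each application of $T$ costs a factor of $\Lambda(q,\delta)\ll\delta$. The rigorous execution of this truncation with uniform constants as $N\to\infty$ is the technical heart of the argument and is carried out in detail in the foundational papers \cite{NSW1,NSW,Nagel}, whose normalizations can be adapted directly to ours.
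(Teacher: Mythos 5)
The paper does not prove this theorem at all: immediately after the statement it says the proof ``can be found in for example \cite{NSW,Bellaiche,Gromov1,Montgomery},'' so there is no in-paper argument to compare yours against. Your sketch is a faithful outline of the classical Nagel--Stein--Wainger route (reduce to $d_3$ via the local equivalence of $d_1,d_2,d_3$; flow the frozen field $\exp(tY)$ for $B_4\subseteq B_3$; piecewise-freeze an admissible path and control the Baker--Campbell--Hausdorff tail by the bracket functions $\lambda_{l^{(k)}}$ for the reverse inclusion), and you correctly locate the genuine content --- uniform truncation of the non-terminating BCH series and the convergence of the freezing approximation --- in exactly the references the paper itself cites. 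Since both you and the authors ultimately rest on \cite{NSW,NSW1,Nagel}, I regard this as the same approach.

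One local caution: in the inclusion $B_4(q,\delta)\subseteq B_3(q,C_2\delta)$ your ``crude Euclidean estimate'' does not do the job you assign it. Euclidean closeness at scale $\delta$ does not imply $d_1$-closeness at scale $O(\delta)$, because the $d_1$-ball is only $\Lambda(q,\delta)\sim\delta^{m}$ thick in the $T$-direction; concretely, smoothness of the $\Lambda_j$ only gives $\Lambda(p,\delta)\le\Lambda(q,\delta)+O(\delta^{3})$, which is useless when $\Lambda(q,\delta)\approx\delta^{m}$ with $m>3$. So the comparison of $\Lambda(p,\delta)$ with $\Lambda(q,\delta)$ needed to apply (\ref{2.81}) cannot be bootstrapped from a Euclidean bound; it requires the continuity/openness argument of \cite{NSW} (or, equivalently, proving the two inclusions simultaneously by induction on scale). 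Since you are already deferring the technical heart to those references, this does not change the verdict, but as written that particular sentence asserts more than the cited estimate delivers.
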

The proof of this theorem can be found in for example \cite{NSW,Bellaiche,Gromov1,Montgomery}. Variants of the Ball-Box Theorem also exist in the literature. The following version of the Ball-Box Theorem  is a consequence of Theorem \ref{thm 2.5} and can be found in \cite{BaloghBonk}.
\begin{co}[Ball-Box Theorem] \label{Cor2.7} Let $\Omega$ be a smooth, bounded, strictly pseudoconvex domain.
There exist positive constants $C_1$, $C_2$ such that for any $q\in\mathbf b\Omega$ and any sufficiently small $\delta>0$, \[\text{Box}(q,C_1\delta)\subseteq B_j(q,\delta)\subseteq \text{Box}(q,C_2\delta) \;\;\text{ for } j\in\{1,2,3,4\}.\] 
Here $\text{Box}(q,\delta)=\{q+Z_H+Z_N\in \mathbf b\Omega: |Z_H|<\delta, |Z_N|<\delta^2\}$ where $Z_{H}\in H_q(\mathbf b\Omega)$ and $Z_{N}$ is orthogonal to $H_q(\mathbf b\Omega)$.
\end{co}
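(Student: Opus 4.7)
The plan is to derive Corollary \ref{Cor2.7} from Theorem \ref{thm 2.5} by establishing that, in the strictly pseudoconvex setting, the non-isotropic ball $B_4(q,\delta)$ is comparable to the Euclidean box $\text{Box}(q,\delta)$ with constants uniform in $q\in\mathbf b\Omega$. Since $\Omega$ is strictly pseudoconvex one has $\Lambda(q,\delta)=\delta^2$, so $B_4(q,\delta)$ is precisely the set of points $\exp(\sum_j a_j X_j(q)+bT(q))$ with $|a_j|<\delta$ and $|b|<\delta^2$. Combining this comparison with the equivalence $B_1\approx B_2\approx B_3\approx B_4$ from Theorem \ref{thm 2.5} then yields the stated equivalence for every $j\in\{1,2,3,4\}$.

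For the inclusion $B_4(q,\delta)\subseteq \text{Box}(q,C_2\delta)$, I would Taylor-expand the time-$1$ flow of the vector field $V:=\sum_j a_j X_j + bT$ starting at $q$. Since the $X_j$'s and $T$ are smooth with uniformly bounded derivatives along $\mathbf b\Omega$ and $|V(q)|\lesssim \delta$, a standard ODE estimate gives
\[\exp(V)(q)=q+\sum_j a_j X_j(q)+bT(q)+O(\delta^2).\]
Decomposing the displacement $\exp(V)(q)-q$ against the orthogonal splitting $T_q\mathbf b\Omega=H_q(\mathbf b\Omega)\oplus H_q(\mathbf b\Omega)^\perp$, the horizontal component has size $\lesssim\delta$, coming from $\sum_j a_j X_j(q)$ plus an $O(\delta^2)$ correction, while the transverse component has size $\lesssim\delta^2$, coming from the normal part of $bT(q)$ together with an $O(\delta^2)$ error. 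Choosing $C_2$ sufficiently large gives $\exp(V)(q)\in\text{Box}(q,C_2\delta)$.

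For the reverse inclusion $\text{Box}(q,C_1\delta)\subseteq B_4(q,\delta)$, I would rescale by setting $a_j=\delta\tilde a_j$ and $b=\delta^2\tilde b$ and study
\[\Phi_{q,\delta}(\tilde a,\tilde b):=\exp\!\Bigl(\delta\sum_j \tilde a_j X_j(q)+\delta^2\tilde b\, T(q)\Bigr)(q).\]
In coordinates on $\mathbf b\Omega$ adapted to $H_q(\mathbf b\Omega)\oplus H_q(\mathbf b\Omega)^\perp$, the map $\Phi_{q,\delta}$ has the form $(\delta A(q)\tilde a+O(\delta^2),\,\delta^2 c(q)\tilde b+O(\delta^3))$, where the linear map $A(q)$ sending $\tilde a$ to $\sum_j \tilde a_j X_j(q)$ is invertible because the $X_j(q)$ span $H_q(\mathbf b\Omega)$, and the scalar $c(q)$ recording the transverse component of $T(q)$ is bounded away from zero by strict pseudoconvexity. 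A quantitative inverse function theorem, applied uniformly in $\delta$, then shows that $\Phi_{q,\delta}$ covers $\text{Box}(q,C_1\delta)$ for some $C_1>0$ depending only on $\Omega$.

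The main obstacle is arranging uniformity of $C_1,C_2$ as $q$ ranges over $\mathbf b\Omega$, since the vector fields $X_j$ and $T$ were only locally defined. This is handled by covering $\mathbf b\Omega$ by finitely many coordinate charts on which both the Taylor expansion above and the nondegeneracy of $A(q)$, $c(q)$ hold with quantitative constants; compactness of $\mathbf b\Omega$ and smoothness of the data then let one pick a single pair $(C_1,C_2)$ that works at every boundary point, completing the reduction to Theorem \ref{thm 2.5}.
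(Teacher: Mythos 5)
The paper does not actually prove Corollary \ref{Cor2.7}; it simply asserts that the statement follows from Theorem \ref{thm 2.5} and cites Balogh--Bonk, so your proposal supplies an argument where the paper offers only a reference. Your sketch is essentially the standard proof and the overall structure is sound: reduce to comparing $B_4(q,\delta)$ with $\text{Box}(q,\delta)$ using $\Lambda(q,\delta)=\delta^2$, get the easy inclusion from the first-order Taylor expansion of the flow (the key point being that $X_j(q)\in H_q(\mathbf b\Omega)$, so the component of the displacement orthogonal to $H_q(\mathbf b\Omega)$ is $b\,T(q)$ plus the $O(\delta^2)$ flow error, hence $O(\delta^2)$), get the reverse inclusion from a uniform inverse function theorem for the anisotropically rescaled exponential map, and then transfer to $j\in\{1,2,3\}$ via Theorem \ref{thm 2.5} and compactness of $\mathbf b\Omega$.

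One inaccuracy worth flagging: your claimed normal form $\Phi_{q,\delta}(\tilde a,\tilde b)=(\delta A(q)\tilde a+O(\delta^2),\ \delta^2 c(q)\tilde b+O(\delta^3))$ is not quite right. The second-order term of the flow, $\tfrac12 DV(q)V(q)$ with $V=\delta\sum_j\tilde a_jX_j+\delta^2\tilde b\,T$, contributes a genuine $\delta^2 Q(\tilde a)$ term to the transverse component, where $Q$ is a quadratic form built from the derivatives of the $X_j$ (this is precisely where the Levi form and the commutators enter). So after rescaling the transverse coordinate by $\delta^{-2}$ the limiting map is $(\tilde a,\tilde b)\mapsto(A(q)\tilde a,\,c(q)\tilde b+Q(\tilde a))$ rather than the decoupled map you wrote. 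This does not damage the argument: the limiting map is a triangular (shear-type) diffeomorphism with Jacobian $\det A(q)\cdot c(q)$ bounded away from zero, so the quantitative inverse function theorem still applies uniformly in $q$ and small $\delta$, and the covering of $\text{Box}(q,C_1\delta)$ follows as you describe. With that correction noted, the proposal is a valid route to the corollary.
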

We will only use this corollary for the strictly pseudoconvex case. See for example \cite{BaloghBonk}.

The next theorem provides estimates for the surface volume of $B_4$, and hence also for $B_j$ with $j=\{1,2,3\}$. See for example \cite{NSW}.
	\begin{lem}\label{lem 2.7}
		Let $\mu$ denote the Lebesgue surface measure on $\mathbf b\Omega$. Then there exist constants $C_1,C_2>0$ such that
		\begin{align}
	C_1 \delta^{2n-2}\Lambda(p,\delta)\leq\mu (B_{4}(p,\delta))\leq C_2 \delta^{2n-2}\Lambda(p,\delta).
		\end{align}
	\end{lem}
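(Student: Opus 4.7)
The strategy is to use the defining exponential parametrization of $B_4(p,\delta)$ and compute the pullback of surface measure directly. Set
\[
\Phi_p(a,b) = \exp\Bigl(\sum_{j=1}^{2n-2} a_j X_j(p) + b\,T(p)\Bigr),
\]
and let $R_\delta(p) = \{(a,b)\in \mathbb R^{2n-2}\times \mathbb R : |a_j|<\delta,\ |b|<\Lambda(p,\delta)\}$, so that by definition $B_4(p,\delta) = \Phi_p(R_\delta(p))$. The plan is to show (i) $\Phi_p$ is injective on $R_\delta(p)$ for all sufficiently small $\delta>0$, uniformly in $p\in\mathbf b\Omega$, and (ii) the surface-measure Jacobian of $\Phi_p$ is comparable, throughout $R_\delta(p)$, to the constant value it takes at $(0,0)$, namely $|\det(X_1(p),\dots,X_{2n-2}(p),T(p))|$, which is nonzero since these vector fields span $T_p(\mathbf b\Omega)$. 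Granting (i) and (ii), the change-of-variables formula yields
\[
\mu(B_4(p,\delta)) = \int_{R_\delta(p)} |{\rm Jac}\, \Phi_p(a,b)|\,da\,db \approx \mathrm{vol}(R_\delta(p)) \approx \delta^{2n-2}\Lambda(p,\delta),
\]
which is the claim.

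To execute (i) and (ii), I would rescale by the anisotropic dilation $\Psi_\delta(\tilde a,\tilde b):=(\delta \tilde a,\Lambda(p,\delta)\tilde b)$ so that $\widetilde\Phi_{p,\delta}:=\Phi_p\circ\Psi_\delta$ maps the fixed unit box $\{|\tilde a_j|<1,|\tilde b|<1\}$ onto $B_4(p,\delta)$. Injectivity on the unit box for small $\delta$ follows from the standard inverse function theorem argument once one verifies that $d\widetilde\Phi_{p,\delta}(0,0)$ is uniformly invertible, after renormalization, in $p$ and in $\delta$. For (ii), Taylor expand $\Phi_p$ around $(0,0)$ using the Baker--Campbell--Hausdorff formula: the first-order term contributes exactly the columns $\delta X_j(p)$ (for the $\tilde a_j$-directions) and $\Lambda(p,\delta)T(p)$ (for the $\tilde b$-direction), so the rescaled Jacobian is a bounded multiple of $|\det(X_1(p),\dots,X_{2n-2}(p),T(p))|$. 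The higher-order corrections in BCH are iterated commutators of the $X_j$'s and $T$; the hypothesis that $\Omega$ is strictly pseudoconvex (so $\Lambda(p,\delta)=\delta^2$) or of finite type $m$ in $\mathbb C^2$ guarantees, through the very definition $\Lambda(p,\delta)=\sum_{j=2}^{m}\Lambda_j(p)\delta^j$, that every iterated commutator of the $X_j$'s has $T$-component of order at most $\Lambda(p,\delta)$. Hence after the rescaling by $\Psi_\delta$ these correction terms are of lower order than $1$, and the rescaled Jacobian is uniformly comparable to its value at the origin.

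The main obstacle is step (ii), the uniform comparability of the Jacobian across the anisotropic box $R_\delta(p)$. This is precisely the point where the finite-type hypothesis enters and where one has to match the anisotropic scaling ($\delta$ in the $X_j$-directions vs.\ $\Lambda(p,\delta)\approx \delta^m$ in the $T$-direction) against the commutator structure of $\{X_j,T\}$. The slow-variation estimate \eqref{2.81} ensures that $\Lambda(q,\delta)\approx \Lambda(p,\delta)$ for all $q\in B_4(p,\delta)$, so that the nominal box and the actual image are genuinely comparable; and the nondegeneracy $|\det(X_1(p),\dots,X_{2n-2}(p),T(p))|\gtrsim 1$ follows from $\partial\rho\ne 0$ on $\mathbf b\Omega$ plus the normalization of the $X_j$'s and $T$. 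Combining these gives both the upper bound $\mu(B_4(p,\delta))\leq C_2\delta^{2n-2}\Lambda(p,\delta)$ and the lower bound $\mu(B_4(p,\delta))\geq C_1\delta^{2n-2}\Lambda(p,\delta)$, completing the proof.
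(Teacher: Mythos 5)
The paper does not actually prove this lemma; it quotes it from Nagel--Stein--Wainger \cite{NSW}, and your outline is in essence the strategy of their proof (canonical coordinates via the exponential map, anisotropic rescaling, comparison of the Jacobian with its value at the center). However, there is a genuine gap at the decisive step (ii). You assert that after rescaling by $\Psi_\delta$ the BCH/Taylor corrections to the differential are ``of lower order than $1$.'' That is false precisely in the $T$-component, which is the whole point of the theorem. The first correction to the $\tilde a_j$-column contains $\tfrac{\delta}{2}\sum_i a_i[X_i,X_j]$, whose $T$-component is of size up to $\delta\cdot\delta\cdot\Lambda_2(p)$; by the very definition \eqref{2.2} one only knows $\delta^2\Lambda_2(p)\le \Lambda(p,\delta)$, i.e.\ this correction is of \emph{exactly the same order} as the main entry $\Lambda(p,\delta)\,T$ in the $\tilde b$-column, not smaller (and in the strictly pseudoconvex case $\delta^2\Lambda_2(p)\approx\Lambda(p,\delta)$ on the nose). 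The same borderline behavior occurs for every $k$-fold commutator of the $X_i$'s, since $\Lambda(p,\delta)$ is by construction the maximum of the quantities $\Lambda_k(p)\delta^k$. Consequently the rescaled Jacobian matrix has the form $D(I+E)$ with $\|E\|=O(1)$ rather than $o(1)$, and comparability of the determinant does not follow from a soft perturbation argument; a priori it could degenerate. This is exactly where the difficulty of the NSW volume estimate lies.

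The gap can be closed, but it requires an additional idea: the borderline correction terms are, modulo genuinely lower-order errors, scalar multiples of $T$ and hence of the $\tilde b$-column, so one performs column operations (subtracting the appropriate multiple of the $\tilde b$-column from each $\tilde a_j$-column), which leave the determinant unchanged and reduce the error matrix to size $O(\delta)$; equivalently, NSW choose the frame and scalings extremally, so that the scaled determinant is maximal over all commutator frames, and this maximality is what dominates the error terms. The quantitative injectivity of $\Phi_p$ on the anisotropic box suffers from the same issue and is likewise not a routine consequence of the inverse function theorem; it too is part of the hard content of \cite{NSW}. The remaining ingredients you invoke --- the nondegeneracy of $\det(X_1,\dots,X_{2n-2},T)$ and the slow-variation estimate \eqref{2.81} --- are correct and are indeed used in the cited proof.
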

As a consequence of the definitions of $d_1$ and $\Lambda$ and Lemma \ref{lem 2.7}, we have the ``doubling measure property'' for the non-isotropic ball: There exists a positive constant $C$ such that for each $p\in \mathbf b\Omega$ and $\delta>0$,  
		\begin{align}\label{2.8}
\mu(B_{j}(p,\delta))\leq C\mu(B_{j}(p,\delta/2))\;\; \text{ for } j\in\{1,2,3,4\}.
	\end{align}

\subsection{Balls on the boundary of a convex/decoupled domain of finite type}
When $\Omega$ is a convex/decoupled domain of finite type in $\mathbb C^n$, non-isotropic sets can be constructed using a special  coordinate system of McNeal \cite{McNeal2,McNeal91,McNeal2003} near the boundary of $\Omega$. Let $p\in \mathbf b\Omega$ be a point of finite type $m$. For a small neighborhood $U$ of the point $p$, there exists a holomorphic coordinate system $z=(z_1,\dots,z_n)$ centered at a point $q\in U$ and defined on $U$ and quantities $\tau_1(q,\delta), \tau_2(q,\delta),\dots, \tau_n(q,\delta)$ such that 
\begin{align}\label{2.10}\;\;\;\;\;\;\tau_1(q,\delta)=\delta\;\;\; \text{ and }\;\;\;\delta^{1/2}\lesssim\tau_j(q,\delta)\lesssim\delta^{1/m}\;\;\text{ for }\;\; j=2,3,\dots,n.\end{align}
Moreover, the polydisc $D(q,\delta)$ defined by:
\begin{align}
D(q,\delta)=\{z\in\mathbb C^n:|z_j|<\tau_j(q,\delta),j=1,\dots,n\}
\end{align}
is the largest one centered at $q$ on which the defining function $\rho$ changes by no more than $\delta$ from its value at $q$, i.e. if $z\in D(q,\delta)$, then $|\rho(z)-\rho(q)|\lesssim \delta$.

The polydisc $D(q,\delta)$ is known to satisfy several ``covering properties'' \cite{McNeal3}: \begin{enumerate}\item There exists a constant $C>0$, such that for points $q_1,q_2\in U\cap \Omega$ with $D(q_1,\delta)\cap D(q_2,\delta)\neq \emptyset$, we have
\begin{align}\label{2.11}
 D(q_2,\delta)\subseteq CD(q_1,\delta) \text{ and }  D(q_1,\delta)\subseteq CD(q_2,\delta).
\end{align}
\item There exists a constant $c>0$ such that for $q\in U\cap \Omega$ and $\delta>0$, we have \begin{align}\label{2.12}D(q,2\delta)\subseteq cD(q,\delta).\end{align}\end{enumerate}
It was also shown in \cite{McNeal3} that $D(p,\delta)$ induces a global quasi-metric on $\Omega$. Here we will use it to define a quasi-metric on $\mathbf b\Omega$.

 For $q\in \mathbf b\Omega$ and $\delta>0$, we define the non-isotropic ball of radius $\delta$ to be the set
\[ B_5(q,\delta)={D(q,\delta)}\cap\mathbf b\Omega.\]
Set containments (\ref{2.11}), (\ref{2.12}), and the compactness and smoothness of $\mathbf b\Omega$  imply the following properties for $B_5$:
\begin{enumerate}\item There exists a constant $C$ such that for $q_1,q_2\in U\cap \mathbf b\Omega$ with $B_5(q_1,\delta)\cap B_5(q_2,\delta)\neq \emptyset$, \begin{align}\label{2.14}
 B_5(q_2,\delta)\subseteq CB_5(q_1,\delta) \text{ and }  B_5(q_1,\delta)\subseteq CB_5(q_2,\delta).
\end{align} \item There exists a constant $c>0$ such that for $q\in U\cap \Omega$ and $\delta>0$, we have \begin{align}\label{2.15}B_5(q,\delta)\subseteq cB_5(q,\delta/2)\;\;\;\;\;\text{ and }\;\;\;\;\;\mu(B_5(q,\delta))\approx \prod_{j=2}^{n}\tau_j^2(q,\delta).\end{align} \end{enumerate}

The balls $B_5$ induce a quasi-metric on $\mathbf b\Omega\cap U$. For $q,p\in \mathbf b \Omega\cap U$, we set
$\tilde d_5(q,p)=\inf\{\delta>0:p\in B_5(q,\delta)\}.$ To extend this quasi-metric $\tilde d_5(\cdot,\cdot)$ to a global quasi-metric $d_5(\cdot,\cdot)$ defined on $\mathbf b\Omega\times\mathbf b\Omega$, one can just patch the local metrics together in an appropriate way. The resulting quasi-metric is not continuous, but satisfies all the relevant properties. We refer the reader to \cite{McNeal3} for more details on this matter. Since $d_5(\cdot,\cdot)$ and $\tilde d_5(\cdot,\cdot)$ are equivalent, we may abuse the notation $B_5$ for the ball on the boundary induced by $d_5$. Then (\ref{2.14}) and (\ref{2.15}) still hold true for $B_5$.

\section{Tents and dyadic structures on $\Omega$}
From now on, the domain $\Omega$ will belong to one of the following cases:
\begin{itemize}
	\item a bounded, smooth, pseudoconvex domain of finite type in $\mathbb C^2$, \item a bounded, smooth, strictly pseudoconvex domain in $\mathbb C^n$, 
	\item a bounded, smooth, convex domain of finite type in $\mathbb C^n$, or
	\item a bounded, smooth, decoupled domain of finite type in $\mathbb C^n$.
\end{itemize}
Notations $d(\cdot,\cdot)$ and $B(p,\delta)$ will be used for \begin{itemize}
	\item the metric $d_1(\cdot,\cdot)$ and the ball $B_1(p,\delta)$ if $\Omega$ is pseudoconvex of finite type in $\mathbb C^2$ or strictly pseudoconvex in $\mathbb C^n$;
	\item  the metric $d_5(\cdot,\cdot)$ and the ball $B_5(p,\delta)$ if $\Omega$ is a convex/decoupled  domain of finite type.

\end{itemize} 
\begin{rmk}It is worth noting that even though we use the same notation $B(p,\delta)$ for balls on the boundary of $\Omega$, the constant $\delta$ has different geometric meanings in different settings. When $\Omega$ is a bounded, smooth, pseudoconvex domain of finite type in $\mathbb C^2$, or a bounded, smooth, strictly pseudoconvex domain in $\mathbb C^n$, $\delta$ represents the radius of the sub-Riemannian ball $B_1(p,\delta)$. When $\Omega$ is a bounded, smooth, convex (or decoupled) domain of finite type in $\mathbb C^n$, $2\delta$ is the height  in the $z_1$ coordinate of the polydisc $D(q,\delta)$ that defines $B_5(q,\delta)$. If $\Omega$ is the unit ball $\mathbb B_n$ which is  strictly pseudoconvex, convex, and decoupled, the ball $B_1(q,\delta)$ will be of similar size as the ball $B_5(q,\sqrt{\delta})$.\end{rmk}

\subsection{Dyadic tents on $\Omega$ and the $\mathcal B_p(\sigma)$ constant} The non-isotropic ball $B(p,\delta)$ on the boundary $\mathbf b\Omega$ induces ``tents'' in the domain $\Omega$.
To define what ``tents'' are we need the orthogonal projection map near the boundary. Let $\operatorname{dist}(\cdot,\cdot)$ denote the Euclidean distance in $\mathbb C^n$. For small $\epsilon>0$, set \begin{align*}&N_{\epsilon}(\mathbf b\Omega)=\{w\in \mathbb C^n: \operatorname{dist}(w,\mathbf b\Omega)<\epsilon\}.\end{align*}
\begin{lem}\label{lem3.2}
	For sufficiently small $\epsilon_0>0$, there exists a map $\pi:N_{\epsilon_0}(\mathbf b\Omega)\to \mathbf b\Omega$  such that
	\begin{enumerate}[label=\textnormal{(\arabic*)}]
		\item For each point $z\in N_{\epsilon_0}(\mathbf b\Omega)$ there exists a unique point $\pi(z)\in \mathbf b\Omega$ such that \[|z-\pi(z)|=\operatorname{dist}(z,\mathbf b\Omega).\]
		\item For $p\in \mathbf b\Omega$, the fiber $\pi^{-1}(p)=\{p-\epsilon n(p): -\epsilon_0\leq \epsilon<\epsilon_0\}$ where $n(p)$ is the outer unit normal vector of $\mathbf b\Omega$ at point $p$.
		\item The map $\pi$ is smooth on $N_{\epsilon_0}(\mathbf b\Omega)$.
		\item If the defining function $\rho$ is the signed distance function to the boundary, the gradient $\triangledown\rho$ satisfies \[\triangledown\rho(z)=n(\pi(z)) \;\text{ for } \;z\in N_{\epsilon_0}(\mathbf b\Omega).\]
	\end{enumerate}
\end{lem}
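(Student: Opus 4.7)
The statement is the classical tubular neighborhood theorem for the smooth compact orientable hypersurface $\mathbf b\Omega$. My plan is to build $\pi$ as the inverse of the normal exponential map and verify the four properties by shrinking the neighborhood as needed.

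\medskip

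First, since $\mathbf b\Omega$ is a smooth compact hypersurface bounding a domain, it is orientable, so the outer unit normal $n:\mathbf b\Omega\to S^{2n-1}$ is a globally defined smooth vector field. Define the normal map
\[
F:\mathbf b\Omega\times\mathbb R\longrightarrow\mathbb C^n,\qquad F(p,t)=p-t\,n(p).
\]
The differential $dF_{(p,0)}$ sends $T_p(\mathbf b\Omega)\oplus\mathbb R$ to $T_p(\mathbf b\Omega)\oplus\mathbb R\cdot n(p)=\mathbb R^{2n}$ by the identity on the tangent factor and by $\partial_t\mapsto -n(p)$ on the normal factor, hence is an isomorphism. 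By the inverse function theorem, $F$ is a local diffeomorphism near each $(p,0)$. Using compactness of $\mathbf b\Omega$ together with a Lebesgue-number argument, one extracts a single $\epsilon_0>0$ for which $F$ restricts to a diffeomorphism
\[
F:\mathbf b\Omega\times(-\epsilon_0,\epsilon_0)\longrightarrow N_{\epsilon_0}(\mathbf b\Omega)
\]
(possibly after shrinking $\epsilon_0$ once more so that the image indeed coincides with the Euclidean tubular neighborhood of radius $\epsilon_0$; injectivity on the product is the place where compactness is essential, since locally injective smooth maps on compact sets are globally injective on small enough uniform neighborhoods). Defining $\pi(z)$ to be the $\mathbf b\Omega$-component of $F^{-1}(z)$ immediately gives property (3) and the fiber description in (2).

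\medskip

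For property (1) I would argue as follows. Given $z\in N_{\epsilon_0}(\mathbf b\Omega)$, any point $q\in\mathbf b\Omega$ minimizing $|z-q|$ exists by compactness, and at such a minimizer $z-q$ must be normal to $\mathbf b\Omega$, so $q=z\pm|z-q|\,n(q)$, i.e. $z=F(q,\pm|z-q|)$. Since $|z-q|\le\operatorname{dist}(z,\mathbf b\Omega)<\epsilon_0$, this forces $(q,\pm|z-q|)$ to lie in the domain of the diffeomorphism $F$, so $q=\pi(z)$ and the minimizer is unique. The main subtlety I anticipate is ensuring that every distance-minimizer actually lies in the range where $F$ is injective; this is handled by choosing $\epsilon_0$ smaller than half the uniform injectivity radius produced above, so that $|z-q|<\epsilon_0$ automatically places $(q,t)$ inside the good region.

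\medskip

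Finally, for property (4), assume $\rho$ is the signed distance to $\mathbf b\Omega$ (negative inside $\Omega$, positive outside). On $N_{\epsilon_0}(\mathbf b\Omega)$ we can write $z=\pi(z)-\rho(z)\,n(\pi(z))$, so $\rho$ is smooth there as the composition of smooth maps. Differentiating the identity $|z-\pi(z)|^2=\rho(z)^2$ and using that $z-\pi(z)$ is parallel to $n(\pi(z))$, together with the fact that variations of $\pi(z)$ are tangential and hence orthogonal to $n(\pi(z))$, yields $\nabla\rho(z)=n(\pi(z))$; alternatively one differentiates $F(\pi(z),-\rho(z))=z$ and uses $dF$ computed above. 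This completes the four items. The only non-routine piece in the whole argument is the global injectivity step in the construction of $\epsilon_0$; everything else is standard calculus once the diffeomorphism $F$ is in hand.
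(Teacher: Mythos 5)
Your proof is correct in substance. Note that the paper itself gives no proof of this lemma --- it simply cites Balogh--Bonk --- and what you have written is precisely the standard tubular-neighborhood construction that the cited reference carries out: the normal map $F(p,t)=p-t\,n(p)$, the inverse function theorem at $t=0$, compactness to obtain a uniform $\epsilon_0$, and the first-order condition at a distance minimizer to identify it with the $\mathbf b\Omega$-component of $F^{-1}(z)$. Two small points to tighten. First, the parenthetical ``locally injective smooth maps on compact sets are globally injective on small enough uniform neighborhoods'' is false as a general principle; what saves you here is that $F$ is already \emph{globally} injective on the core $\mathbf b\Omega\times\{0\}$ and a local diffeomorphism near it, so a sequential compactness argument (two sequences of distinct preimage pairs with $t_k,s_k\to 0$ would subconverge to a common point of $\mathbf b\Omega\times\{0\}$, contradicting local injectivity there) yields the uniform injectivity radius. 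You clearly know this is the crux, but the argument should be stated in that form. Second, with the convention that $\rho<0$ inside $\Omega$ and $n$ the outward normal, the correct identity is $z=\pi(z)+\rho(z)\,n(\pi(z))$, i.e. $F(\pi(z),-\rho(z))=z$ as in your alternative derivation; the displayed formula $z=\pi(z)-\rho(z)\,n(\pi(z))$ has the sign reversed, though this does not affect the conclusion $\nabla\rho(z)=n(\pi(z))$.
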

A proof of Lemma \ref{lem3.2} can be found in \cite{BaloghBonk}.
\begin{de}\label{de3.3}
	Let $\epsilon_0$ and $\pi$ be as in Lemma (\ref{lem3.2}). For $z\in \mathbf b\Omega$ and sufficiently small  $\delta>0$, the ``tent'' $B^\#(z,\delta)$ over the ball $B(z,\delta)$ is defined to be the subset of $N_{\epsilon_0}(\mathbf b\Omega)$ as follows:
	 When $\Omega$ is a pseudoconvex domain of finite type in $\mathbb C^2$ or a strictly pseudoconvex domain,
	\[B^\#(z,\delta)=B_1^\#(z,\delta)=\{w\in  \Omega: \pi(w)\in B_1(z,\delta), |\pi(w)-w|\leq \Lambda (\pi(w),\delta)\}.\] 
	When $\Omega$ is a convex (or decoupled) domain of finite type in $\mathbb C^n$,
		\[B^\#(z,\delta)=B_5^\#(z,\delta)=\{w\in  \Omega: \pi(w)\in B_5(z,\delta), |\pi(w)-w|\leq \delta\}.\] 
		For $\delta\gtrsim 1$ and any  $z\in \mathbf b\Omega$, we set $B^\#(z,\delta)=\Omega$.
\end{de}
For the ``tent'' $B^\#(z,\delta)$ to be within $N_{\epsilon_0}(\mathbf b\Omega)$, the constant  $\delta$ in Definition \ref{de3.3} needs to satisfy $\Lambda(z^\prime,\delta)<\epsilon_0$ for $z^\prime\in B_1(z,\delta)$ when $\Omega$ is of finite type in $\mathbb C^2$ or strictly pseudoconvex; and satisfy $\delta<\epsilon_0$ when $\Omega$ is a convex (or decoupled) domain in $\mathbb C^n$.

Given a subset $U\in \mathbb C^n$, let $V(U)$ denote the Lebesgue measure of $U$. By (\ref{2.81}) and the definitions of the tents $B^\#_1(z,\delta)$ and $B^\#_5(z,\delta)$, we have:
\begin{align}\label{3.1}
&V(B_1^\#(z,\delta))\approx \delta^{2n-2}\Lambda^2(z,\delta),\\\label{3.2}&V(B_5^\#(z,\delta))\approx \delta^{2}\prod_{j=2}^{n}\tau^2_j(z,\delta).
\end{align}
and hence also the ``doubling property'':
\begin{align}\label{3.3}
V(B^\#(z,\delta))\approx V(B^\#(z,\delta/2)).
\end{align} We give the definition of the Bekoll\'e-Bonami constant on $\Omega$.
For a weight $\sigma$ and a subset $U\subseteq \Omega$, we set $\sigma(U):=\int_U\sigma dV$ and let $\langle f\rangle^{\sigma dV}_U$ denote the average of the function $|f|$ with respect to the measure $\sigma dV$ on the set $U$:
\begin{equation*}
\langle f\rangle^{\sigma dV}_U=\frac{\int_{U}|f(w)|\sigma dV}{\sigma(U)}.
\end{equation*}
\begin{de}\label{de3.4}
Given  weights $\sigma(z)$ and $\nu=\sigma^{-p^\prime/p}(z)$ on $\Omega$,  the characteristic  $ [\sigma]_p$ of the weight $\sigma$ is defined by
\begin{equation}\label{3.40}
[\sigma]_p:=\left(\langle\sigma \rangle^{dV}_{\Omega}\left(\langle \nu\rangle^{dV}_{\Omega}\right)^{p-1}\right)^{1/p}+pp^\prime\left(\sup_{\epsilon_0>\delta>0, z\in \mathbf b\Omega}\langle\sigma \rangle^{dV}_{B^\#(z,\delta)}\left(\langle \nu\rangle^{dV}_{B^\#(z,\delta)}\right)^{p-1}\right)^{\max \{1,\frac{1}{p-1}\}}.
\end{equation} 
\end{de}
\begin{rmk}\label{Re3.5}
A natural generalization of the $\mathcal B_p$ constant in the above setting will be  \[\mathcal B_p(\sigma)=\max\left\{\langle\sigma \rangle^{dV}_{\Omega}\left(\langle \nu\rangle^{dV}_{\Omega}\right)^{p-1},\sup_{\epsilon_0>\delta>0, z\in \mathbf b\Omega}\langle\sigma \rangle^{dV}_{B^\#(z,\delta)}\left(\langle \nu\rangle^{dV}_{B^\#(z,\delta)}\right)^{p-1}\right\}.\]
It is not hard to see that $\mathcal B_p(\sigma)$ and $[\sigma]_p$ are qualitatively equivalent, i.e., $\mathcal B_p(\sigma)$ is finite if and only if $[\sigma]_p$ is finite. But they are not quantitatively equivalent. As one will see in the proof of Theorem \ref{t:main}, the products of averages of $\sigma$ and $\sigma^{1/(1-p)}$ over the whole domain and over the small tents will have different impacts on the estimate for the weighted norm of the projection $P$. The $\mathcal B_p(\sigma)$ above fails to reflect such a difference, and hence is unable to give the sharp upper bound. For the same reason, the claimed sharpness of the Bekoll\'e-Bonami bound in \cite{Rahm} is not quite correct. See Remark 6.1. This issue did not occur in the upper half plane case \cite{Pott} since the average over the whole upper half plane is not included in the $\mathcal B_p$ constant there. 
\end{rmk}

Now we are in the position of constructing dyadic systems on $\mathbf b\Omega$ and $\Omega$. 
Note that the ball $B(\cdot,\delta)$ on $\mathbf b\Omega$ satisfies the ``doubling property'' as in (\ref{2.8}) and (\ref{2.15}). By (\ref{2.81}) and (\ref{2.11}),  the surface area $\mu(B(q_1,\delta))\approx \mu(B(q_2,\delta))$ for any $q_1,q_2\in \mathbf b\Omega$ satisfying $d(q_1,q_2)\leq \delta$. Combining these facts yields that the metric $d(\cdot,\cdot)$ is a doubling metric, i.e. for every $q\in \mathbf b\Omega$ and $\delta>0$, the ball $B(q,\delta)$ can be covered by at most $M$ balls $B(x_i,\delta/2)$. Results of Hyt\"onen and Kairema in \cite{HK} then give the following lemmas:
\begin{lem}\label{lem3.5}Let $\delta$ be a positive constant that is sufficiently small and let $s>1$ be a parameter.
There exist reference points $\{p_j^{(k)}\}$ on the boundary $\mathbf b\Omega$ and  an associated collection of subsets $\mathcal Q=\{Q_j^{k}\}$ of $\mathbf b\Omega$ with $p_j^{(k)}\in Q_j^{k}$ such that the following properties hold:
\begin{enumerate}[label=\textnormal{(\arabic*)}]
	\item For each fixed $k$, $\{p_j^{(k)}\}$ is a largest set of points on $\mathbf b\Omega$ satisfying $d_1(p_j^{(k)},p_i^{(k)})> s^{-k}\delta$ for all $i,j$. In other words, if $p\in \mathbf b\Omega$ is a point that is not in $\{p_j^{(k)}\}$, then there exists an index $j_o$ such that $d_1(p,p_{j_o}^{(k)})\leq s^{-k}\delta$.
	\item For each fixed $k$, $\bigcup_j Q^k_j=\mathbf b\Omega$ and $Q^k_j\bigcap Q^k_i=\emptyset$ when $i\neq j$.
	\item For $k< l$ and any $i,j$, either $Q^k_j\supseteq Q^l_i$ or $Q^k_j\bigcap Q^l_i=\emptyset$.
	\item There exist positive constants $c$ and $C$ such that for all $j$ and $k$, \[B(p_j^{(k)},cs^{-k}\delta)\subseteq Q^k_j\subseteq B(p_j^{(k)},Cs^{-k}\delta).\]
	\item Each $Q_j^k$ contains of at most $N$ numbers of $Q^{k+1}_i$. Here $N$ does not depend on $k, j$.
	\end{enumerate}
\end{lem}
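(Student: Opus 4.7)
My plan is to derive the lemma as a direct application of the dyadic cube construction for quasi-metric spaces of homogeneous type due to Hyt\"onen and Kairema \cite{HK}. The main preparatory step is to confirm that $(\mathbf{b}\Omega, d, \mu)$ is a compact space of homogeneous type. The paragraph preceding the statement already establishes that $d$ is a genuine doubling quasi-metric on $\mathbf{b}\Omega$: in both cases (the sub-Riemannian setting and the McNeal setting) the doubling property of $\mu$ on $d$-balls follows from (\ref{2.8}) or (\ref{2.15}), while (\ref{2.81}) together with (\ref{2.11}) gives comparability of $\mu(B(q_1,\delta))$ and $\mu(B(q_2,\delta))$ for $d(q_1,q_2)\leq \delta$. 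Compactness of $\mathbf{b}\Omega$ makes all constants uniform.

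With that in hand, I would apply \cite{HK} as follows. Fix a sufficiently small $\delta>0$ and take $s>1$ large enough that $s^{-1}$ is below the threshold appearing in the Hyt\"onen--Kairema construction relative to the quasi-metric constant of $d$. For each $k\in\mathbb{Z}$, choose $\{p_j^{(k)}\}$ to be a maximal $s^{-k}\delta$-separated subset of $\mathbf{b}\Omega$ with respect to $d$; such a set exists by Zorn's lemma (and is finite by compactness) and immediately satisfies property~(1). By maximality, the balls $B(p_j^{(k)}, s^{-k}\delta)$ cover $\mathbf{b}\Omega$.

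Next I would invoke the tree construction in \cite{HK}. First arrange the reference points so that they are nested across scales: assign every center at level $k+1$ to a unique ``parent'' center at level $k$, compatibly with the separation condition. Then assign each $x\in\mathbf{b}\Omega$ at level $k$ to the nearest center, resolving ties using the pre-chosen parent assignment. The resulting sets $Q_j^k$ form a partition at each level (property~(2)) that is nested across levels (property~(3)), and the Hyt\"onen--Kairema theorem yields the sandwiching $B(p_j^{(k)}, cs^{-k}\delta) \subseteq Q_j^k \subseteq B(p_j^{(k)}, Cs^{-k}\delta)$ with uniform constants $c, C$, giving property~(4). Finally, property~(5) follows from~(4) combined with doubling: any child $Q_i^{k+1} \subseteq Q_j^k$ contains a $d$-ball of radius $cs^{-(k+1)}\delta$ (with disjoint interiors across siblings), while $Q_j^k$ sits inside a $d$-ball of radius $Cs^{-k}\delta$, so the doubling property of $\mu$ bounds the number of children by a constant $N$ depending only on $c$, $C$, $s$, and the doubling constant of $\mu$.

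The main subtlety requiring care is the globalization of the quasi-metric in the convex/decoupled case, where McNeal's $\tilde d_5$ is initially only defined in a neighborhood $U$; one must patch these local pieces into a global quasi-metric $d_5$ on $\mathbf{b}\Omega$ as in \cite{McNeal3} and verify that the global measure $\mu$ is doubling with respect to the patched $d_5$. Compactness of $\mathbf{b}\Omega$ and the finite-overlap of the patching neighborhoods make this routine, so that the rest of the argument is a black-box application of \cite{HK}.
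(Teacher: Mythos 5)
Your proposal matches the paper's approach exactly: the paper proves this lemma simply by noting (in the paragraph preceding the statement) that $(\mathbf{b}\Omega,d,\mu)$ is doubling—using (\ref{2.8}), (\ref{2.15}), (\ref{2.81}), and (\ref{2.11})—and then citing the Hyt\"onen--Kairema construction \cite{HK} as a black box. Your additional detail on maximal separated sets, the parent assignment, and the derivation of property (5) from (4) plus doubling is a correct unpacking of what \cite{HK} provides.
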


\begin{lem}\label{lem3.6}
Let $\delta$ and $\{p^{(k)}_j\}$ be as in Lemma \ref{lem3.5}. There are finitely many collections $\{\mathcal Q_l\}_{l=1}^{N}$ such that the following hold:\begin{enumerate}[label=\textnormal{(\arabic*)}]\item Each collection $\mathcal Q_l$ is associated to some dyadic points $\{z^{(k)}_j\}$ and they satisfy all the properties in Lemma \ref{lem3.5}.\item For any $z\in \mathbf b\Omega$ and small $r>0$, there exist $Q_{j_1}^{k_1}\in \mathcal Q_{l_1}$ and $Q_{j_2}^{k_2}\in \mathcal Q_{l_2}$ such that
\[Q_{j_1}^{k_1}\subseteq B(z,r)\subseteq Q_{j_2}^{k_2}\;\;\;\text{ and }\;\;\;\mu(B(z,r))\approx\mu(Q_{j_1}^{k_1})\approx\mu(Q_{j_2}^{k_2}).\]\end{enumerate}
\end{lem}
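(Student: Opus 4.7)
The plan is to construct $N$ dyadic systems on $\mathbf{b}\Omega$ by the adjacent shifted dyadic lattices method of Hyt\"onen--Kairema, adapted from the classical $3^n$-lattices argument in $\mathbb{R}^n$. The underlying philosophy is that a single system $\mathcal{Q}$ may sandwich a given ball $B(z,r)$ badly between consecutive dyadic scales, but by running finitely many systems with shifted reference points, one guarantees that some cube from some system contains $B(z,r)$ with comparable size, while some cube from a (possibly different) system sits inside $B(z,r)$ with comparable size.

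First, I would fix a large integer $K$, depending only on the doubling constant of $d$ on $\mathbf{b}\Omega$ (which holds by (\ref{2.8}) or (\ref{2.15})) and on the quasi-metric constant of $d$, such that $s^K$ exceeds both the ratio $C/c$ appearing in property (4) of Lemma \ref{lem3.5} and the quasi-metric constant. For each $l=1,\ldots,N$, I would apply Lemma \ref{lem3.5} starting from a maximal $s^{-k}\delta$-separated set of reference points $\{z^{(k)}_j\}_l$, with the collection of systems chosen so that the union $\bigcup_{l=1}^N \{z^{(k)}_j\}_l$ is $cs^{-(k+K)}\delta$-dense in $\mathbf{b}\Omega$ at every scale $k$. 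Such a choice is possible, and $N$ is bounded in terms of the doubling constant of $d$ and $K$, because any ball of radius $s^{-k}\delta$ can be covered by a controlled number of balls of radius $s^{-(k+K)}\delta$.

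Second, I would verify the sandwich property. Given $z\in \mathbf{b}\Omega$ and small $r>0$, choose scales $k_2<k_1$ with $s^{-k_2}\delta\approx r\approx s^{-(k_1+K)}\delta$. By the density construction, some system $\mathcal{Q}_{l_2}$ contains a reference point $z^{(k_2)}_{j_2}$ within distance $cs^{-k_2}\delta$ of $z$, so property (4) of Lemma \ref{lem3.5} yields $B(z,r)\subseteq Q^{k_2}_{j_2}$; combining the outer bound $Q^{k_2}_{j_2}\subseteq B(z^{(k_2)}_{j_2},Cs^{-k_2}\delta)$ with the doubling property keeps $\mu(Q^{k_2}_{j_2})$ comparable to $\mu(B(z,r))$. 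Symmetrically, some $\mathcal{Q}_{l_1}$ contains a reference point $z^{(k_1)}_{j_1}$ so close to $z$ that property (4) gives $Q^{k_1}_{j_1}\subseteq B(z,r)$ with comparable measure.

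The principal obstacle is the bookkeeping for the scale shift $K$: in a genuine metric space the triangle inequality is rigid, but with the quasi-metric $d$ it carries a multiplicative constant, so $K$ must be chosen large enough that the containment chain survives after each application. Once $K$ is pinned down, verification of the five properties of Lemma \ref{lem3.5} for each $\mathcal{Q}_l$ is automatic since each is built by the same procedure; the only new content is the sandwich property. In practice one could simply invoke the Hyt\"onen--Kairema theorem \cite{HK} as a black box, having verified that $(\mathbf{b}\Omega, d, \mu)$ is a doubling quasi-metric measure space in all four of our geometric settings.
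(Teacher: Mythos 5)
Your proposal is correct and takes essentially the same route as the paper: the paper offers no proof of this lemma beyond verifying that $(\mathbf{b}\Omega, d, \mu)$ is a doubling quasi-metric measure space (via (\ref{2.8}), (\ref{2.15}), (\ref{2.81}), and (\ref{2.11})) and then citing the adjacent dyadic systems theorem of Hyt\"onen--Kairema \cite{HK}, which is exactly the black-box invocation you describe in your final paragraph. Your additional sketch of the shifted-lattice internals is consistent with the standard construction in \cite{HK} and does not diverge from the paper's argument in any essential way.
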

Setting the sets $Q_j^k$ in Lemma \ref{lem3.5} as the bases, we construct dyadic tents in $\Omega$ as follows:
\begin{de}\label{de3.7}
	Let $\delta$, $\{p^{(k)}_j\}$ and $\mathcal Q=\{Q_j^{k}\}$ be as in Lemma \ref{lem3.5}. We define the collection $ {\mathcal T}=\{\hat {K}_j^{k}\}$ of dyadic tents in the domain $\Omega$ as follows:
	\begin{itemize}
		\item When $\Omega$ is pseudoconvex of finite type in $\mathbb C^2$, or strictly pseudoconvex  in $\mathbb C^n$, we define
	\[\hat {K}_j^{k}:=\{z\in\Omega: \pi(z)\in Q_j^k \text{ and }|\pi(z)-z|<\Lambda(\pi(z),s^{-k}\delta)\}.\]
\item When $\Omega$ is a convex or decoupled domain of finite type in $\mathbb C^n$, we define
	\[\hat {K}_j^{k}:=\{z\in\Omega: \pi(z)\in Q_j^k \text{ and }|\pi(z)-z|<s^{-k}\delta\}.\]
\end{itemize}
\end{de}

\begin{lem}\label{lem3.8}Let $\mathcal T=\{\hat K^k_j\}$ be a collection of dyadic tents in Definition \ref{de3.7} and let $\{\mathcal Q_l\}_{l=1}^{N}$ be a collection of subsets in Lemma \ref{lem3.6}.  The following statements hold true:\begin{enumerate}[label=\textnormal{(\arabic*)}]\item For any $\hat {K}_j^{k}$, $\hat {K}_i^{k+1}$ in $\mathcal T$, either $\hat {K}_j^{k}\supseteq\hat {K}_i^{k+1}$ or $\hat {K}_j^{k}\bigcap\hat {K}_i^{k+1}=\emptyset$.\item For any $z\in \mathbf b\Omega$ and small $r>0$, there exist $Q_{j_1}^{k_1}\in \mathcal Q_{l_1}$ and $Q_{j_2}^{k_2}\in \mathcal Q_{l_2}$ such that
		\[\hat K_{j_1}^{k_1}\subseteq B^\#(z,r)\subseteq \hat K_{j_2}^{k_2}\;\;\;\text{ and }\;\;\;V(B^\#(z,r))\approx V(\hat K_{j_1}^{k_1})\approx V(\hat K_{j_2}^{k_2}).\]\end{enumerate}\end{lem}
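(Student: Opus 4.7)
The idea is to inherit nestedness from the boundary sets $Q^k_j$ and upgrade it to tents via monotonicity of the tent height. Suppose $\hat K^{k+1}_i\cap \hat K^{k}_j\neq\emptyset$ and pick any $w$ in the intersection. Then $\pi(w)\in Q^{k+1}_i\cap Q^{k}_j$, so Lemma \ref{lem3.5}(3) forces $Q^{k+1}_i\subseteq Q^{k}_j$. For the tents themselves, the ``height'' defining each $\hat K^k_j$ is either $\Lambda(\pi(\cdot), s^{-k}\delta)$ or simply $s^{-k}\delta$; in both cases it is non-decreasing in $s^{-k}\delta$, since the polynomial $\Lambda(q,\cdot)$ from (\ref{2.2}) has non-negative coefficients. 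Hence for any $w'\in \hat K^{k+1}_i$ one has $\pi(w')\in Q^{k+1}_i\subseteq Q^k_j$ and $|\pi(w')-w'|$ bounded by the level-$(k+1)$ height, which is bounded by the level-$k$ height, so $w'\in \hat K^k_j$.

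\textbf{Part (2).} Apply Lemma \ref{lem3.6}(2) to $B(z,r)$ to produce $Q^{k_1}_{j_1}\in\mathcal Q_{l_1}$ and $Q^{k_2}_{j_2}\in\mathcal Q_{l_2}$ with $Q^{k_1}_{j_1}\subseteq B(z,r)\subseteq Q^{k_2}_{j_2}$ and surface measures all comparable to $\mu(B(z,r))$. The base inclusions for the tents are then automatic; the work lies in (a) comparing the tent heights and (b) reading off the volume estimate. For (a), I translate measure comparability into radius comparability: by Lemma \ref{lem 2.7} (in the finite type in $\mathbb C^2$/strictly pseudoconvex case) or (\ref{2.15}) (in the convex/decoupled case), together with Lemma \ref{lem3.5}(4), $\mu(B(p,\delta))$ is a strictly increasing polynomial-type function of $\delta$ whose ratio $\mu(B(p,a\delta))/\mu(B(p,\delta))$ is squeezed between positive powers of $a$ with exponents depending only on $n$ and the type $m$. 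Since $z$ lies within distance $\lesssim r$ and $\lesssim s^{-k_i}\delta$ of $p^{(k_i)}_{j_i}$, (\ref{2.81}) lets me interchange base points in $\Lambda$ up to constants, and the measure comparability $\mu(B(z,r))\approx \mu(Q^{k_i}_{j_i})$ yields $s^{-k_i}\delta\approx r$ for $i=1,2$. Monotonicity of the height in the dyadic parameter then gives the inclusions $\hat K^{k_1}_{j_1}\subseteq B^\#(z,r)\subseteq \hat K^{k_2}_{j_2}$; any universal constant that creeps in when passing between scales is absorbed via the doubling property (\ref{3.3}) by choosing the dyadic level one step finer or coarser if necessary (this flexibility is provided by the finite family $\{\mathcal Q_l\}$ in Lemma \ref{lem3.6}). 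For (b), $V(\hat K^{k_i}_{j_i})\approx \mu(Q^{k_i}_{j_i})\cdot\Lambda(p^{(k_i)}_{j_i}, s^{-k_i}\delta)$ (respectively $\mu(Q^{k_i}_{j_i})\cdot s^{-k_i}\delta$ in the convex/decoupled case), obtained by integrating the height over the base and pulling it out using (\ref{2.81}); combining with $s^{-k_i}\delta\approx r$, $\mu(Q^{k_i}_{j_i})\approx\mu(B(z,r))$, and (\ref{3.1})/(\ref{3.2}) gives $V(\hat K^{k_1}_{j_1})\approx V(B^\#(z,r))\approx V(\hat K^{k_2}_{j_2})$.

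\textbf{Main obstacle.} The only substantive step is converting measure comparability on the boundary into radius comparability between the dyadic scale $s^{-k_i}\delta$ and the ball radius $r$. Doubling alone is not sufficient for this—one can have comparable measure without comparable radius on a general space of homogeneous type—and what makes it work here is the structural polynomial growth of $\mu(B(p,\cdot))$ coming from Lemma \ref{lem 2.7} and (\ref{2.15}). Once this bookkeeping is handled, the remainder of the lemma follows directly from Lemma \ref{lem3.6}(2) and the height monotonicity built into Definition \ref{de3.7}.
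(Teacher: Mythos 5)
Your proof is correct and follows the same route as the paper, whose own argument for this lemma is just a two-sentence citation of the definitions together with Lemma \ref{lem3.5}(3) and Lemma \ref{lem3.6}(2). You have simply filled in the details the paper leaves implicit — in particular the conversion of measure comparability into comparability of $s^{-k_i}\delta$ with $r$ via the polynomial growth of $\mu(B(p,\cdot))$, and the absorption of the resulting constants by shifting dyadic levels — and these details check out.
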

\begin{proof}
Statement (1) is a consequence of the definition of $\hat K^k_j$ and  Lemma \ref{lem3.5}(3). Statement (2) is a consequence of the definitions of $B^\#(z,r)$, $\hat K^k_j$, and Lemma \ref{lem3.6}(2). 
\end{proof}
By Lemma \ref{lem3.8}(2), we can replace $B^\#(z,\delta)$ by $\hat K^k_{j}$ in the definition of $[\sigma]_p$ to obtain a quantity of comparable size:
\begin{equation}
[\sigma]_p\approx\left(\langle\sigma \rangle^{dV}_{\Omega}\left(\langle \nu\rangle^{dV}_{\Omega}\right)^{p-1}\right)^{1/p}+pp'\left(\sup_{1\leq l\leq N}\sup_{\hat K^k_j\in \mathcal T_{l}}\langle\sigma \rangle^{dV}_{\hat K^k_j}\left(\langle \nu\rangle^{dV}_{\hat K^k_j}\right)^{p-1}\right)^{\max \{1,1/(p-1)\}}.
\end{equation} 
From now on, we will abuse the notation $[\sigma]_p$ to represent both the supremum in $B^\#_{q}$ and the supremum in $\hat K^k_{j}$.
\subsection{Dyadic kubes on $\Omega$}  By choosing the parameter $s$ in Lemmas \ref{lem3.5} and \ref{lem3.6} to be sufficiently large, we can also assume that for any $p\in Q_i^{k+1}\subset Q_j^{k}$, one has \begin{align}
\label{3.4}
\Lambda(p,s^{-k-1}\delta)<\frac{1}{4}\Lambda(p,s^{-k}\delta)\end{align}\begin{de}\label{de3.9}For a collection $\mathcal T$ of dyadic tents, we define the center $\alpha_j^{(k)}$ of each tent $\hat {K}_j^{k}$ to be the point satisfying 
\begin{itemize}
	\item $\pi(\alpha_j^{(k)})=p^{(k)}_j$; and 
	\item $|p^{(k)}_j-\alpha_j^{(k)}|=\frac{1}{2}\sup_{\pi(p)=p^{(k)}_j}\operatorname{dist}(p,\mathbf b\Omega)$.
\end{itemize}
We set $K^{k}_{-1}=\Omega\backslash\left(\bigcup_{j}\hat  {K}_j^{0}\right)$, and for each point $\alpha_j^{(k)}$ or its corresponding tent $\hat K^k_j$, we define the dyadic ``kube''  
${K}_j^{k}:=\hat {K}_j^{k}\backslash\left(\bigcup_{l}\hat  {K}_l^{k+1}\right),$
where $l$ is any index with $p^{(k+1)}_l\in \hat{K}^{k}_j$. \end{de}
The following lemma for dyadic kubes holds true:\begin{lem}\label{3.10} Let $\mathcal T=\{\hat K^k_j\}$ be the system of tents induced by  $\mathcal Q$ in Definition \ref{de3.7}. Let $K^k_j$ be the kubes of $\hat K^k_j$. Then \begin{enumerate}[label=\textnormal{(\arabic*)}]\item $K^k_j$'s are pairwise disjoint and\; $\bigcup_{j,k}K^k_j=\Omega$.  \item When $\Omega$ is a finite type domain in $\mathbb C^2$ or a strictly pseudoconvex domain in $\mathbb C^n$, \begin{align}\label{3.5}V(K^k_j)\approx V(\hat K^k_j)\approx s^{-k(2n-2)}\delta^{2n-2}\Lambda(p^{(k)}_j,s^{-k}\delta).\end{align} When $\Omega$ is a convex or decoupled domain of finite type in $\mathbb C^n$, \begin{align}\label{3.6}V(K^k_j)\approx V(\hat K^k_j)\approx s^{-2k}\delta^{-2}\prod_{j=2}^{n}\tau_j^2(p^{(k)}_j,s^{-k}\delta).\end{align}\end{enumerate}\end{lem}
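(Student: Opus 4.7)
The plan is to verify the two parts separately. Part (1) is a combinatorial consequence of the nested-or-disjoint structure of the tents (Lemma \ref{lem3.8}(1), Lemma \ref{lem3.5}(3)) together with the fact that the height cutoff defining $\hat K^k_j$ shrinks to zero as $k\to\infty$. Part (2) is a two-sided volume comparison; $V(\hat K^k_j)$ is computed directly from (\ref{3.1})--(\ref{3.2}) at scale $s^{-k}\delta$ and Lemma \ref{lem3.5}(4), so the only nontrivial step is the lower bound $V(K^k_j)\gtrsim V(\hat K^k_j)$, for which one must beat the factor $N$ (from Lemma \ref{lem3.5}(5)) by an $s$-dependent shrinkage factor.

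For (1), given two kubes with $k\le k'$ (the exceptional $K^0_{-1}$ being handled by its own definition): if $k=k'$, Lemma \ref{lem3.5}(2) makes the bases disjoint for $j\ne j'$, so the tents and kubes are disjoint. If $k<k'$, Lemma \ref{lem3.5}(3) forces $Q^{k'}_{j'}$ to be contained in some $Q^{k+1}_l\subseteq Q^k_j$ or disjoint from $Q^k_j$; in the former case the corresponding $\hat K^{k+1}_l$ is one of the tents removed in the definition of $K^k_j$, and by monotonicity of the height cutoff $\hat K^{k'}_{j'}\subseteq \hat K^{k+1}_l$, so $K^{k'}_{j'}\cap K^k_j=\emptyset$. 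For the covering claim, fix $p\in \Omega$: if $p\notin \bigcup_j\hat K^0_j$ then $p\in K^0_{-1}$; otherwise, since $\operatorname{dist}(p,\mathbf b\Omega)>0$ while the tent height at level $k$ -- namely $\Lambda(\cdot,s^{-k}\delta)$ in the first case and $s^{-k}\delta$ in the second -- tends to $0$, there is a maximal $k_0$ with $p\in \bigcup_j\hat K^{k_0}_j$; by maximality, $p$ avoids every level-$(k_0+1)$ sub-tent, and hence $p\in K^{k_0}_j$ for the unique $j$ with $\pi(p)\in Q^{k_0}_j$.

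For (2), we need only prove $V(K^k_j)\gtrsim V(\hat K^k_j)$. Write the removed piece as a disjoint union of at most $N$ sub-tents $\hat K^{k+1}_l$ with $p^{(k+1)}_l\in Q^k_j$. Using (\ref{2.81}) (or its $\tau_j$-analogue, from (\ref{2.11})) to recenter volumes from $p^{(k+1)}_l$ to $p^{(k)}_j$, and applying (\ref{3.1})--(\ref{3.2}) at the two successive scales $s^{-k-1}\delta$ and $s^{-k}\delta$, each sub-tent has volume at most $\eta(s)\cdot V(\hat K^k_j)$, where in the strictly pseudoconvex or finite type in $\mathbb C^2$ case $\eta(s)\approx s^{-(2n-2)}\cdot\tfrac{1}{16}$ (the $\tfrac1{16}$ coming from (\ref{3.4}) squared, since the height factor in (\ref{3.1}) is $\Lambda^2$), and in the convex/decoupled case $\eta(s)\approx s^{-2}$ (using only the $\delta^2$ prefactor in (\ref{3.2}), the residual $\tau_j$-ratios being bounded by $1$ by monotonicity).

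The main obstacle is the quantitative balancing in this last step: to conclude $V(K^k_j)\gtrsim V(\hat K^k_j)$, the total removed volume must be strictly less than, say, $\tfrac12 V(\hat K^k_j)$. This is the precise purpose of the freedom to choose the parameter $s$ large in Lemmas \ref{lem3.5}--\ref{lem3.6}: such a choice makes $N\eta(s)<\tfrac12$ and simultaneously secures (\ref{3.4}), completing the proof of (2).
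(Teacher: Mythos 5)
Your part (1) and your computation of $V(\hat K^k_j)$ from (\ref{3.1})--(\ref{3.2}) and Lemma \ref{lem3.8}(2) are fine (the paper dismisses (1) with ``by definition,'' so your fuller argument is welcome), but the final quantitative step of part (2) has a genuine gap. The constant $N$ in Lemma \ref{lem3.5}(5) is uniform in $k$ and $j$ but \emph{not} in the scale parameter $s$: each child $Q^{k+1}_l$ contains a ball of radius $cs^{-k-1}\delta$, the children are disjoint, and all of them sit inside $B(p^{(k)}_j,Cs^{-k}\delta)$, so $N$ is of order $\mu(B(p^{(k)}_j,Cs^{-k}\delta))/\mu(B(\cdot,cs^{-k-1}\delta))$ and grows polynomially in $s$. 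Hence ``choose $s$ large so that $N\eta(s)<\tfrac12$'' is circular: enlarging $s$ enlarges $N$. Concretely, in the convex/decoupled case your bound $\eta(s)\approx s^{-2}$ is obtained by discarding the $\tau_j$-ratios, while $N$ can be as large as $s^{\,n-1}$ (since $\tau_j(q,t\delta)\lesssim t^{1/2}\tau_j(q,\delta)$ for $t\ge1$), so $N(s)\eta(s)$ can be of order $s^{\,n-3}$ and does not tend to zero for $n\ge3$. In the strictly pseudoconvex and finite type in $\mathbb C^2$ cases the product does come out $\le\tfrac14$, but only after one tracks the $s$-dependence of $N$ against the $\Lambda$-ratios, which you do not do.

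The repair --- and the paper's actual argument --- avoids counting children entirely. The removed set $\hat K^k_j\setminus K^k_j=\bigcup_l\hat K^{k+1}_l$ lies in the thin collar $\{z:\pi(z)\in Q^k_j,\ |\pi(z)-z|<h_{k+1}(\pi(z))\}$, where $h_{k+1}$ is the pointwise height of level-$(k+1)$ tents, because the bases $Q^{k+1}_l$ are pairwise disjoint subsets of $Q^k_j$. Since the volume of such a collar is comparable to $\int_{Q^k_j}h_{k+1}\,d\mu$, one gets $V(\hat K^k_j\setminus K^k_j)\lesssim (h_{k+1}/h_k)\,V(\hat K^k_j)$ directly, where $h_{k+1}/h_k=s^{-1}$ in the convex/decoupled case and $h_{k+1}/h_k\le\tfrac14$ pointwise by (\ref{3.4}) in the other two cases; taking $s$ large (equivalently, invoking (\ref{3.4})) then beats the absolute comparability constants and yields $V(K^k_j)\gtrsim V(\hat K^k_j)$ with no reference to $N$. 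Your ``max child volume times $N$'' bound discards exactly the disjointness of the bases that makes this summation work.
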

\begin{proof}
	Statement (1) holds true by the definition of $K^k_j$. The estimates for $V(\hat K^k_j)$ in (\ref{3.5}) and (\ref{3.6}) follow from (\ref{3.1}), (\ref{3.2}) and Lemma \ref{lem3.8}(2). When the domain is convex or decoupled of finite type in $\mathbb C^n$, the height of $\hat K_j^k$ is $s$ times the height of the tent $\hat K_j^k\backslash K_j^k$. Thus $V(\hat K_j^k)\approx V(\hat K_j^k\backslash K_j^k)$ which also implies $V(\hat K_j^k)\approx V(K_j^k)$. For the finite type in $\mathbb C^2$ case and strictly pseudoconvex case, it follows by (\ref{3.4}) that $\Lambda(p,s^{-k-1}\delta)<\frac{1}{4}\Lambda(p,s^{-k}\delta)$ for any $p\in Q_i^{k+1}\subset Q_j^{k}$ . Hence the height of $\hat K^k_j$ will be at least 4 times the height of $\hat K^k_j\backslash K^k_j$. Thus $V(\hat K_j^k)\approx V(K_j^k)$.
\end{proof}
\subsection{Weighted maximal operator based on dyadic tents}  \begin{de}\label{de3.12}Let $\sigma$ be a positive integrable function on $\Omega$. Let  $\mathcal T_l$ be a collection of dyadic tents as in Definition 3.6. The weighted maximal operator $\mathcal M_{\mathcal T_l,\sigma}$ is defined by
	\begin{equation}
	\mathcal M_{\mathcal T_l,\sigma}f(w):=\sup_{\hat K^k_j\in\mathcal T_l}\frac{1_{\hat K^k_{j}}(w)}{\sigma(\hat K^k_{j})}\int_{\hat K^k_{j}}|f(z)|\sigma(z)dV(z).
	\end{equation}
\end{de}
\begin{lem}\label{lem3.12}
$\mathcal M_{\mathcal T_l,\sigma}$ is bounded on $L^p(\Omega,\sigma)$ for $1<p\leq \infty$. Moreover \begin{align}\label{3.8}\|M_{\mathcal T_l,\sigma}\|_{L^p(\sigma)}\lesssim p/(p-1).\end{align}
\end{lem}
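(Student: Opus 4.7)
The plan is to establish Lemma 3.12 by the standard dyadic maximal function route: prove a weak-type $(1,1)$ bound with constant $1$ with respect to the measure $\sigma\, dV$, observe the trivial $L^\infty$ bound, and then apply Marcinkiewicz interpolation to obtain the $L^p(\Omega,\sigma)$ estimate with the sharp $p' = p/(p-1)$ dependence.

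First I would exploit the nested structure of the tent family. By Lemma \ref{lem3.8}(1), any two tents $\hat K^k_j, \hat K^{k'}_{j'} \in \mathcal T_l$ are either disjoint or one contains the other. Fix $\lambda > 0$ and consider the superlevel set $E_\lambda = \{w \in \Omega : \mathcal M_{\mathcal T_l,\sigma}f(w) > \lambda\}$. Every point $w \in E_\lambda$ lies in some $\hat K \in \mathcal T_l$ with $\frac{1}{\sigma(\hat K)}\int_{\hat K}|f|\,\sigma\, dV > \lambda$. Among all such tents containing $w$ I pick the maximal one (which exists because each $\hat K$ is contained in one of the finitely many top-level tents of $\mathcal T_l$, or else eventually lands in all of $\Omega$, in which case we fold that case in separately). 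By the tree property, the collection of maximal tents $\{\hat K_\alpha\}$ arising this way is pairwise disjoint, and $E_\lambda = \bigcup_\alpha \hat K_\alpha$. Hence
\begin{equation*}
\sigma(E_\lambda) \;=\; \sum_\alpha \sigma(\hat K_\alpha) \;\leq\; \sum_\alpha \frac{1}{\lambda}\int_{\hat K_\alpha} |f|\,\sigma\, dV \;\leq\; \frac{1}{\lambda}\,\|f\|_{L^1(\Omega,\sigma)},
\end{equation*}
which is the weak-type $(1,1)$ bound with constant $1$.

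Second, the $L^\infty$ bound is immediate: for every tent $\hat K$ we have $\frac{1}{\sigma(\hat K)}\int_{\hat K}|f|\,\sigma\, dV \leq \|f\|_{L^\infty(\Omega,\sigma)}$, so $\|\mathcal M_{\mathcal T_l,\sigma}f\|_{L^\infty(\sigma)} \leq \|f\|_{L^\infty(\sigma)}$. Now Marcinkiewicz interpolation between the endpoints $(1,1)$-weak and $(\infty,\infty)$-strong yields, for $1 < p < \infty$,
\begin{equation*}
\|\mathcal M_{\mathcal T_l,\sigma}f\|_{L^p(\Omega,\sigma)} \;\leq\; C\,\frac{p}{p-1}\,\|f\|_{L^p(\Omega,\sigma)},
\end{equation*}
with the $p/(p-1)$ factor being the standard output of the Marcinkiewicz constant when one endpoint has constant $1$ and the operator is sublinear. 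The $p=\infty$ case is the trivial bound already noted.

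I do not anticipate any serious obstacle: the only non-routine ingredient is confirming that the tent family behaves like a genuine dyadic tree so that the selection of maximal tents is valid, but this is exactly the content of Lemma \ref{lem3.8}(1) together with the nested structure inherited from Lemma \ref{lem3.5}(3) on the boundary. A minor bookkeeping point is the top-level tent $\Omega$ (handled by the convention $B^\#(z,\delta) = \Omega$ for $\delta \gtrsim 1$ in Definition \ref{de3.3}), but it simply adds at most one outermost tent to the covering and does not affect the disjointness argument. No weight-specific hypothesis on $\sigma$ enters beyond local integrability, because averaging with respect to $\sigma\, dV$ normalizes away the weight in the weak-$(1,1)$ step.
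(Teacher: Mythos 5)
Your proposal is correct and follows essentially the same route as the paper: trivial $L^\infty$ bound, weak-type $(1,1)$ via selection of pairwise disjoint maximal tents (using the nestedness from Lemma \ref{lem3.8}(1)), and Marcinkiewicz interpolation to get the $p/(p-1)$ constant. The only cosmetic difference is that the paper selects tents with average exceeding $\lambda/2$ (yielding weak-type constant $2$) while you use $\lambda$ directly; both are fine.
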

\begin{proof}
It's obvious that $\mathcal M_{\mathcal T_l,\sigma}$ is bounded on $L^\infty(\Omega,\sigma)$. We claim $\mathcal M_{\mathcal T_l,\sigma}$  is of weak-type $(1,1)$, i.e. for $f\in L^1(\Omega,\sigma)$, the following inequality holds for all $\lambda>0$:
\begin{align}\label{3.9}
\sigma(\{z\in\Omega:M_{\mathcal T_l,\sigma}(f)(z)>\lambda\})\lesssim \frac{\|f\|_{L^1(\Omega,\sigma)}}{\lambda}.
\end{align}
Then the Marcinkiewicz Interpolation Theorem implies the boundedness of  $\mathcal M_{\mathcal T_l,\sigma}$ on $L^p(\Omega,\sigma)$ for $1<p\leq \infty$, and inequality (\ref{3.8}) follows from a standard argument for the Hardy-Littlewood maximal operator.

 For a point $w\in \left\{z\in\Omega:\mathcal M_{\mathcal T_l,\sigma}f(z)>\lambda\right\}$, there exists a unique maximal tent $\hat K^k_j\in \mathcal T$ that contains $w$ and satisfies:
\begin{equation}
\frac{1_{\hat K^k_j}(w)}{\sigma(\hat K^k_{j})}\int_{\hat K^k_{j}}|f(z)|\sigma(z)dV(z)>\frac{\lambda}{2}.
\end{equation}
Let $\mathcal I_\lambda$ be the set of  all such maximal tents $\hat K^k_j$. The union of these maximal tents covers the set $\left\{z\in \Omega:\mathcal M_{\mathcal T_l,\sigma}f(z)>\lambda\right\}$. Since the tents $\hat K^k_j$ are maximal, they are also pairwise disjoint. Hence
\begin{equation*}
\sigma(\left\{z\in \Omega:\mathcal M_{\mathcal T_l,\sigma}f(z)>\lambda\right\})\leq \sum_{\hat K^k_j \in \mathcal I_\lambda}\sigma(\hat K^k_j)\leq \sum_{\hat K^k_j\in \mathcal I_\lambda}\frac{2}{\lambda}\int_{ \hat{K}^k_j}f(z)\sigma(z)dV(z)\leq\frac{2\|f\|_{L^1(\Omega,\sigma)}}{\lambda}.
\end{equation*}
Thus inequality (\ref{3.9}) holds and $\mathcal M_{\mathcal T_l,\sigma}$ is weak-type (1,1). 
\end{proof}
\section{Estimates for the  Bergman kernel function}
We recall known estimates for the Bergman kernel function, and their relation with the volume of the tents in the previous section. 
\subsection{Finite Type in $\mathbb C^2$ Case} In \cite{NRSW3}, the estimate of the Bergman kernel has been expressed in terms of $d_1$ and $\Lambda(p,\delta)$. Similar results were also obtained in \cite{McNeal1}.
\begin{thm}[{\hspace{1sp}\cite{NRSW3,McNeal1}}] \label{thm4.1} Let $\epsilon_0$ be the same as in Lemma \ref{lem3.2}. Then for points  $p,q\in N_{\epsilon_0}(\mathbf b\Omega)$, one has
	\begin{align} 
|K_{\Omega}(p;\bar q)|\lesssim d_1(p,q)^{-2}\Lambda(\pi(p),d_1(p,q))^{-2}.
	\end{align}
As a consequence, there is a constant $c$ such that $p,q\in B^\#_1(\pi(p),cd_1(p,q))$ and 
\begin{align}
|K_{\Omega}(p;\bar q)|\lesssim (V( B^\#_1(\pi(p),cd_1(p,q))))^{-1}.
\end{align}
\end{thm}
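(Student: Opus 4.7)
The plan is to prove the pointwise Bergman kernel estimate (4.1) using a weighted $\bar\partial$-argument adapted to the non-isotropic geometry of a finite-type point in $\mathbb C^2$, and then to deduce the volumetric reformulation (4.2) from (4.1) via the tent-volume identity (3.1). Since the statement is attributed to NRSW and McNeal, I would follow their strategy rather than invent a new one.

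For the first inequality, I fix $p, q \in N_{\epsilon_0}(\mathbf{b}\Omega)$, set $\delta = d_1(p, q)$ (using the natural extension of $d_1$ to the tubular neighborhood, where normal displacement of size $\Lambda(\pi(\cdot),\delta)$ contributes at scale $\delta$), and aim first for the on-diagonal estimate
\begin{equation*}
|K_\Omega(p;\bar p)| \lesssim V(B_1^\#(\pi(p),\delta))^{-1}.
\end{equation*}
This is obtained from the extremal characterization $K_\Omega(p;\bar p) = \sup\{|f(p)|^2 : \|f\|_{L^2(\Omega)}\leq 1\}$ together with a localized $L^2$-holomorphic function produced by solving a $\bar\partial$-equation with a cut-off supported in a set comparable to $B_1^\#(\pi(p),\delta)$. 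The crucial analytic input is the construction of a plurisubharmonic weight $\varphi_{p,\delta}$ whose complex Hessian dominates the reciprocal of the non-isotropic volume inside $B_1^\#(\pi(p),\delta)$; in $\mathbb C^2$ this weight is built directly from the functions $\Lambda_j(\pi(p))$ that detect type $m$ through iterated commutators. Hörmander's $L^2$-estimate for $\bar\partial$ with weight $e^{-\varphi_{p,\delta}}$ then solves the correction term with the correct size. Off-diagonal, one passes to $p\neq q$ by a Cauchy--Schwarz pairing $K_\Omega(p;\bar q)=\langle K_\Omega(\cdot;\bar q),K_\Omega(\cdot;\bar p)\rangle$ together with the geometric fact that, for $d_1(p,q)=\delta$, the weight $\varphi_{p,\delta}$ is comparable at both $p$ and $q$, yielding (4.1).

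For the consequence (4.2), the work is purely geometric. Specializing (3.1) to $n=2$ gives $V(B_1^\#(z,\delta))\approx \delta^{2}\Lambda^2(z,\delta)$, so $\delta^{-2}\Lambda(\pi(p),\delta)^{-2}\approx V(B_1^\#(\pi(p),\delta))^{-1}$ with $\delta=d_1(p,q)$. To verify $p,q\in B_1^\#(\pi(p),c\,d_1(p,q))$ for some universal $c$, I note that $\pi(p)\in B_1(\pi(p),c d_1(p,q))$ trivially and $\pi(q)\in B_1(\pi(p),c d_1(p,q))$ by the triangle inequality (after enlarging $c$ to absorb the normal components of $d_1(p,q)$), while the normal displacements $|p-\pi(p)|$ and $|q-\pi(q)|$ are bounded by $\Lambda(\pi(p),d_1(p,q))$ by the definition of the extended $d_1$; the comparability (2.81) of $\Lambda(\pi(p),\delta)$ with $\Lambda(\pi(q),\delta)$ handles the asymmetry.

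The main obstacle is, unsurprisingly, the weighted $\bar\partial$-argument underlying (4.1): producing a plurisubharmonic weight whose Hessian is correctly calibrated to the non-isotropic balls $B_1(\pi(p),\delta)$ requires all the finite-type machinery of NRSW/McNeal (the $\Lambda_j$'s, the Ball-Box theorem, and subelliptic gain). By contrast, the passage from (4.1) to (4.2) is a short covering-lemma exercise once the volume formula (3.1) and the doubling statement (2.81) are in hand.
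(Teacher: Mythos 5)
The paper does not actually prove the kernel bound $|K_{\Omega}(p;\bar q)|\lesssim d_1(p,q)^{-2}\Lambda(\pi(p),d_1(p,q))^{-2}$; it is quoted from \cite{NRSW3,McNeal1}, and the only content the paper supplies is the passage to the volumetric form via (3.1) together with the containment $p,q\in B^\#_1(\pi(p),cd_1(p,q))$. Your treatment of that passage --- specializing (3.1) to $n=2$, using (2.81) to compare $\Lambda$ at $\pi(p)$ and $\pi(q)$, and enlarging the radius by a fixed constant that is absorbed by the doubling property (3.3) --- is exactly what is intended and is correct.

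Your reconstruction of the cited estimate itself, however, has a gap at the off-diagonal step. The reproducing identity plus Cauchy--Schwarz gives $|K_\Omega(p;\bar q)|\le K_\Omega(p;\bar p)^{1/2}K_\Omega(q;\bar q)^{1/2}$, and your on-diagonal bound controls $K_\Omega(p;\bar p)$ by $V(B_1^\#(\pi(p),\delta_p))^{-1}$, where $\delta_p$ is the non-isotropic height of $p$ above the boundary (the scale at which $\Lambda(\pi(p),\delta_p)\approx \operatorname{dist}(p,\mathbf b\Omega)$) --- not by $V(B_1^\#(\pi(p),d_1(p,q)))^{-1}$. When $p$ and $q$ are both very close to $\mathbf b\Omega$ but $d_1(p,q)$ is comparatively large, the Cauchy--Schwarz bound is therefore far \emph{larger} than the claimed one, and no comparability of the weight $\varphi_{p,\delta}$ at the two points can repair this, since the inequality has already discarded all information about the separation of $p$ and $q$. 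The genuine off-diagonal decay is precisely the hard part of \cite{NRSW3} (a parametrix for $\Box_b$ and the attendant singular-integral estimates) and of \cite{McNeal1} (non-isotropic scaling of the domain at scale $d_1(p,q)$ about $\pi(p)$, uniform subelliptic estimates, and interior Sobolev bounds for the rescaled kernels). If you mean to supply a proof rather than a citation, that machinery must be invoked explicitly; as written, the step ``yielding (4.1)'' does not follow.
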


\subsection{Strictly Pseudoconvex Case} When $\Omega$ is bounded, strictly pseudoconvex with smooth boundary, the behavior of the Bergman kernel function is well understood. In \cite{Fefferman,Monvel}, asymptotic expansions of the kernel function were obtained on and off the diagonal.  To obtain the $L^p$ mapping property of the Bergman projection, a weaker estimate as in \cite{CuckovicMcNeal} would suffice. The proof of the following theorem can be found in \cite{McNeal2003}.
\begin{thm}[{\hspace{1sp}\cite{McNeal2003}}]\label{thm4.2}
	Let $\Omega$ be a smooth, bounded, strictly pseudoconvex domain in $\mathbb C^n$ with a defining function $\rho$. For each $p\in \mathbf b\Omega$, there exists a neighborhood $U$ of $p$, holomorphic coordinates $(\zeta_1,\dots,\zeta_n)$ and a constant $C>0$, such that for $p,q\in U\bigcap \Omega$,
	\begin{align}
	|K_{\Omega}(p;\bar q)|\leq C\left(|\rho(p)|+|\rho(q)|+|p_n-q_n|+\sum_{j=1}^{n-1}|p_k-q_k|^2\right)^{-n-1}.
	\end{align}
	Here $p=(p_1,\dots,p_n)$ is in $\zeta$-coordinates.
\end{thm}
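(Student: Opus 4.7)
The plan is to exploit strict pseudoconvexity to put $\rho$ in a normal form near $p$, introduce non-isotropic polydiscs adapted to that form, and read both the on-diagonal and off-diagonal kernel bounds off from the sub--mean-value property for holomorphic functions together with the reproducing formula. After translating so that $p$ becomes the origin, a unitary rotation arranging $\partial\rho(0) = dz_n$, absorbing the pluriharmonic part of the second-order Taylor polynomial of $\rho$ into a biholomorphic change of coordinates, and a unitary rotation in $(z_1,\ldots,z_{n-1})$ that diagonalizes the Levi form (which is positive definite by strict pseudoconvexity), one obtains holomorphic coordinates $(\zeta_1,\ldots,\zeta_n)$ on a neighborhood $U$ of $p$ in which
\[
\rho(\zeta) = -2\operatorname{Re}(\zeta_n) + \sum_{j=1}^{n-1} |\zeta_j|^2 + O(|\zeta|^3).
\]

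For $z\in U\cap\Omega$ and $\delta>0$ small, define the adapted polydisc
\[
P(z,\delta) := \{w\in \mathbb C^n : |w_j-z_j| < c\sqrt{\delta}\ \text{ for }\ 1\le j\le n-1,\ |w_n-z_n| < c\delta\},
\]
for a sufficiently small $c>0$. Plugging $w\in P(z,\delta)$ into the normal form, one verifies $P(z,\delta)\subset \Omega$ whenever $\delta\gtrsim |\rho(z)|$, and the Lebesgue volume satisfies $V(P(z,\delta))\approx \delta^{n+1}$. Combining the variational characterization
\[
K_\Omega(z;\bar z) = \sup\{|f(z)|^2 : f\in A^2(\Omega),\ \|f\|_{L^2(\Omega)}\le 1\}
\]
with the sub--mean-value inequality $|f(z)|^2 \le V(P(z,c\delta))^{-1}\|f\|_{L^2(\Omega)}^2$ yields the on-diagonal estimate $K_\Omega(z;\bar z)\lesssim \delta^{-(n+1)}$ for any $\delta\gtrsim |\rho(z)|$.

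For the off-diagonal bound, set
\[
\delta := |\rho(z)| + |\rho(w)| + |z_n - w_n| + \sum_{j=1}^{n-1} |z_j - w_j|^2.
\]
Since $\delta \ge \max(|\rho(z)|,|\rho(w)|)$, both $P(z,c\delta)$ and $P(w,c\delta)$ lie in $\Omega$. Applying the sub--mean-value inequality to the holomorphic map $\eta\mapsto K_\Omega(\eta;\bar w)$ on $P(z,c\delta)$ and invoking the reproducing identity $\int_\Omega |K_\Omega(\eta;\bar w)|^2\,dV(\eta) = K_\Omega(w;\bar w)$, followed by the on-diagonal bound at $w$ now with the larger polydisc of size $c\delta$ in place of $|\rho(w)|$, gives
\[
|K_\Omega(z;\bar w)|^2 \le \frac{K_\Omega(w;\bar w)}{V(P(z,c\delta))} \lesssim \frac{1}{V(P(z,c\delta))\,V(P(w,c\delta))} \approx \delta^{-2(n+1)},
\]
whence $|K_\Omega(z;\bar w)| \lesssim \delta^{-(n+1)}$, which is exactly the claimed inequality.

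The main technical obstacle is verifying that $P(z,c\delta)\subset \Omega$ uniformly in $z\in U\cap\Omega$ as soon as $\delta\gtrsim |\rho(z)|$: one needs the cubic error $O(|\zeta|^3)$ in the normal form to be dominated by the quadratic gain $\sum|\zeta_j|^2 \lesssim c^2\delta$ throughout the polydisc, which in turn fixes both the constant $c$ and the size of the neighborhood $U$. Once this inclusion is established, the rest of the argument is a routine application of the reproducing identity and the sub--mean-value property.
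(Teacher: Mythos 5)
There is a fatal gap, and it sits exactly where the real difficulty of this theorem lives. Your containment claim is backwards: $P(z,\delta)\subset\Omega$ holds when $\delta\lesssim|\rho(z)|$, not when $\delta\gtrsim|\rho(z)|$. Moving from $z$ by $c\delta$ in the $\zeta_n$-direction changes $-2\operatorname{Re}(\zeta_n)$ by about $2c\delta$, so as soon as $\delta\gg|\rho(z)|$ the polydisc crosses $\mathbf b\Omega$; a large polydisc centered at a point very close to the boundary cannot lie in $\Omega$. Consequently your ``on-diagonal estimate $K_\Omega(z;\bar z)\lesssim\delta^{-(n+1)}$ for any $\delta\gtrsim|\rho(z)|$'' is false — taking $\delta\approx 1$ it would give $K_\Omega(z;\bar z)\lesssim 1$ uniformly, contradicting Fefferman's asymptotics $K_\Omega(z;\bar z)\approx|\rho(z)|^{-(n+1)}$. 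Both steps of your off-diagonal display rely on this false statement: the sub--mean-value inequality on $P(z,c\delta)$ needs $P(z,c\delta)\subset\Omega$, and the bound $K_\Omega(w;\bar w)\lesssim V(P(w,c\delta))^{-1}$ needs the false large-$\delta$ on-diagonal estimate. If you repair both steps by shrinking the polydiscs to the admissible scales $|\rho(z)|$ and $|\rho(w)|$, the argument (which is then just Cauchy--Schwarz, since $|K(z;\bar w)|^2\le K(z;\bar z)K(w;\bar w)$) yields only $|K(z;\bar w)|\lesssim(|\rho(z)||\rho(w)|)^{-(n+1)/2}$, which is \emph{weaker} than the claimed bound whenever the tangential separation dominates: it contains no decay in $|z_j-w_j|$ at all. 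A secondary problem is that your polydiscs are adapted to the coordinates at $p$ rather than at $z$, so even the containment $P(z,c|\rho(z)|)\subset\Omega$ fails for $z\in U$ with $|z'|\gg\sqrt{|\rho(z)|}$, because the cross terms $2\operatorname{Re}((w_j-z_j)\bar z_j)$ in $\sum|w_j|^2-\sum|z_j|^2$ are of size $\sqrt{\delta}\,|z'|$, not $\delta$.

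The moral is that the off-diagonal decay in the tangential variables cannot be extracted from the reproducing identity and interior sub--mean-value estimates alone; it requires genuinely holomorphic input. The paper does not prove this theorem but quotes it from McNeal's article on subelliptic estimates and scaling for the $\overline\partial$-Neumann problem, where the estimate is obtained by rescaling the anisotropic polydiscs $D(z,\delta)$ to unit scale, invoking uniform subelliptic (Sobolev) estimates for the Bergman projections of the rescaled domains, and applying Sobolev embedding to get pointwise kernel bounds; alternatively one can read the estimate off the Fefferman/Boutet de Monvel--Sj\"ostrand parametrix, or from Kerzman's theorem combined with a H\"ormander $L^2$ construction of peak functions. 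Any correct proof must bring in one of these tools; your outline, as written, does not.
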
 
Up to a unitary rotation and a translation, we may assume that, under the original $z$-coordinates, $\partial \rho(p)=dz_n$ and $p=0$, then the holomorphic coordinates $(\zeta_1,\dots,\zeta_n)$ in Theorem \ref{thm4.2} can be expressed as the biholomorphic mapping $\Phi(z)=\zeta$ with 
\begin{align*}
&\zeta_1=z_1\\
&\vdots\\
&\zeta_{n-1}=z_{n-1}\\
&\zeta_n=z_n+\frac{1}{2}\sum_{k,l=1}^{n}\frac{\partial^2\rho}{\partial z_l\partial z_k}(p)z_kz_l.
\end{align*}
The next theorem relates the estimate in Theorem \ref{thm4.2} to the measure of the tents:
\begin{thm} \label{thm4.3}Let $p$, $q$, and $(p_1,\dots,p_n)$ be the same as in Theorem \ref{thm4.2} .
There exists a constant $r>0$ such that the tent $B_1^\#(p,r)$ contains points $p$ and $q$, and  \begin{align}\label{4.2}r^2\approx\max\left\{|\rho(p)|+|\rho(q)|,|p_n-q_n|+\sum_{j=1}^{n-1}|p_k-q_k|^2\right\}.\end{align}Moreover, $|K_{\Omega}(p;\bar q)|\lesssim \left(V(B_1^\#(p,r))\right)^{-1}$.
\end{thm}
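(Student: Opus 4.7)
The plan is to choose $r^2 := c\,\max\bigl\{|\rho(p)|+|\rho(q)|,\ |p_n-q_n|+\sum_{j=1}^{n-1}|p_j-q_j|^2\bigr\}$ for a sufficiently large absolute constant $c$, so that \eqref{4.2} holds by construction, and then verify the two inclusions $p,q\in B_1^\#(\pi(p), r)$ (I read the tent $B_1^\#(p,r)$ in the statement as $B_1^\#(\pi(p),r)$, which coincide when $p\in\mathbf b\Omega$). Since $B_1^\#(\pi(p),r)=\Omega$ once $r\gtrsim 1$, and since in the strictly pseudoconvex setting $\Lambda(z,\delta)\equiv\delta^2$ gives $V(B_1^\#(\pi(p),r))\approx r^{2n+2}$ by \eqref{3.1}, I may assume $r$ is small.

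The containment $p\in B_1^\#(\pi(p),r)$ reduces to $|p-\pi(p)|=|\rho(p)|\leq r^2$, which holds by construction. For $q\in B_1^\#(\pi(p),r)$, the height condition $|q-\pi(q)|=|\rho(q)|\leq\Lambda(\pi(q),r)=r^2$ is again immediate, so the main task is the horizontal bound $\pi(q)\in B_1(\pi(p),Cr)$. Here I would invoke the Ball-Box Theorem (Corollary \ref{Cor2.7}): it suffices to decompose $\pi(q)-\pi(p)=Z_H+Z_N$ with $Z_H\in H_{\pi(p)}(\mathbf b\Omega)$, $Z_N$ normal to $H_{\pi(p)}(\mathbf b\Omega)$, $|Z_H|\lesssim r$ and $|Z_N|\lesssim r^2$. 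Using $\pi(z)=z-\rho(z)\,n(\pi(z))$ when $\rho$ is the signed distance function, write
\[
\pi(q)-\pi(p) = (q-p) + \bigl(\rho(p)\,n(\pi(p)) - \rho(q)\,n(\pi(q))\bigr).
\]
In the $\zeta$-coordinates of Theorem \ref{thm4.2} the complex tangent space at the base boundary point $p_0$ is spanned by $\partial/\partial\zeta_1,\dots,\partial/\partial\zeta_{n-1}$, so the first $n-1$ entries of $q-p$ supply a horizontal-like vector of norm $\sqrt{\sum_{j<n}|p_j-q_j|^2}\leq r$, while $|q_n-p_n|\leq r^2$ points essentially in the normal direction. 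The correction term has norm $\lesssim|\rho(p)|+|\rho(q)|\leq r^2$ and is also essentially normal, since the unit outer normal $n(\cdot)$ varies smoothly. Projecting onto the genuine horizontal and normal subspaces at $\pi(p)$ introduces only smooth $O(|\pi(p)-p_0|)$ tilts, which are absorbed into constants after shrinking $U$ (or enlarging $c$).

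Combining the two inclusions, $p,q\in B_1^\#(\pi(p),r)$, whence Theorem \ref{thm4.2} and the choice of $r$ give
\[
|K_\Omega(p;\bar q)|\lesssim r^{-2n-2}\approx V(B_1^\#(\pi(p),r))^{-1},
\]
as desired. The main technical obstacle is the tilt described above: the $\zeta$-coordinates are adapted to $p_0$ rather than to $\pi(p)$, so identifying the Euclidean ``horizontal/normal at $p_0$'' directions with the geometric $H_{\pi(p)}(\mathbf b\Omega)$ and its orthogonal complement is not canonical. It is however a smooth change of basis of norm $O(|\pi(p)-p_0|)$, hence harmless once $U$ is taken small enough; no idea beyond smoothness of the defining function and compactness is required.
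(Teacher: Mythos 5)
Your proposal is correct and follows essentially the same route as the paper: take $r\approx\max\{r_1,r_2\}$ with $r_1^2=|p_n-q_n|+\sum_{j<n}|p_j-q_j|^2$ and $r_2^2=|\rho(p)|+|\rho(q)|$, use the Ball--Box Theorem (Corollary \ref{Cor2.7}) to place $\pi(q)$ in $B_1(\pi(p),Cr)$, control the normal displacements by $\Lambda(\cdot,r)=r^2$, and then read off the kernel bound from Theorem \ref{thm4.2} and $V(B_1^\#)\approx r^{2n+2}$. The only difference is that you spell out the decomposition of $\pi(q)-\pi(p)$ and the tilt between the $\zeta$-coordinate axes at the base point and $H_{\pi(p)}(\mathbf b\Omega)$, which the paper absorbs into the single assertion that Corollary \ref{Cor2.7} yields $\pi(q)\in\text{Box}(\pi(p),C_1r_1)$; this is a legitimate filling-in of detail, not a different argument.
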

\begin{proof}Note that $\Phi$ is biholomorphic and can be approximated by the identity map near $p$. For any points $w,\eta$ in the neighborhood $U$ of the point $p$ in Theorem \ref{thm4.2}, the distance $d_1(w,\eta)$ is about the same when computed in coordinates $(\zeta_1,\dots,\zeta_n)$. $\Phi$ is also measure preserving since the complex Jacobian determinant $J_{\mathbb C}\Phi=1$. Therefore we may assume that those results about $d_1$ and volumes of the tents in Sections 2 and 3 hold true  in $\zeta$-coordinates.
Then by (\ref{3.1}) and the strict pseudoconvexity ($\Lambda(\pi(p),\epsilon)=\epsilon^2$), the estimate $$|K_{\Omega}(p;\bar q)|\lesssim (V(B_1^\#(p,r)))^{-1}$$ holds true for  $r>0$ that satisfies (\ref{4.2}). Therefore it is enough to show the existence of such a constant $r$.  Set $r_1=\sqrt{|p_n-q_n|+\sum_{j=1}^{n-1}|p_k-q_k|^2}$ and $r_2=\sqrt{|\rho(p)|+|\rho(q)|}$. Note that $\partial/\partial\zeta_1,\dots,\partial/\partial\zeta_{n-1}$ are in $H_p(\mathbf b\Omega)$ and $\partial/\zeta_n$ is orthogonal to $H_p(\mathbf b\Omega)$. It follows from the fact $\Lambda(\pi(p),\epsilon)=\epsilon^2$ 
 and Corollary \ref{Cor2.7} that  there exists a constant  $C_1$ such that the boundary point $\pi(q)\in \text{Box}(\pi(p),C_1r_1)$. On the other hand,
$
|\rho(p)|+|\rho(q)|\approx \operatorname{dist}(p,\mathbf b\Omega)+\operatorname{dist}(q,\mathbf b\Omega).
$
 Therefore there exists a constant $C_2$ such that $\Lambda(\pi(p),C_2r_2)>|\rho(p)|+|\rho(q)|$. Set $r=\max\{C_1r_1,C_2r_2\}$. Then $B_1^\#(\pi(p),r)$ contains both points $p,q$ and inequality (\ref{4.2}) holds.
\end{proof}
\subsection{Convex/Decoupled Finite Type Case} When $\Omega$ is a smooth, bounded, convex (or decoupled) domain of finite type in $\mathbb C^n$, estimates of the Bergman kernel function on $\Omega$ were obtained in \cite{McNeal2,McNeal91,McNeal2003}. See also \cite{NPT} for a correction of a minor issue in \cite{McNeal2}.
\begin{thm}\label{thm4.4}
Let $\Omega$ be a smooth, bounded, convex (or decoupled) domain of finite type in $\mathbb C^n$. Let $p$ be a boundary point of $\Omega$. There exists a neighborhood $U$ of $p$ so that for all $q_1,q_2\in U\cap \Omega$,
\begin{align}
|K_\Omega(q_1;\bar q_2)|\lesssim \delta^{-2}\prod_{j=2}^{n}\tau_j(q_1,\delta)^{-2},
\end{align}
where $\delta=|\rho(q_1)|+|\rho(q_2)|+\inf\{\epsilon>0:q_2\in D(q_1,\epsilon)\}$.
\end{thm}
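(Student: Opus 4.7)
The plan is to follow the strategy of McNeal, who established this estimate for convex domains in his works cited in the excerpt and which was extended to the decoupled case via the same coordinate machinery. The key mechanism is to reduce the global kernel estimate to a local one on the largest polydisc $D(q_1,\delta)$ centered at $q_1$ on which $\rho$ oscillates by at most $\delta$, and to transfer bounds from the polydisc to the Bergman projection using the extremal characterization of the kernel.

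First I would prove the on-diagonal version, namely $K_\Omega(q;\bar q) \approx V(D(q,|\rho(q)|))^{-1}$. The upper bound follows from the extremal property
\[
K_\Omega(q;\bar q)=\sup\left\{|f(q)|^2:f\in A^2(\Omega),\|f\|_{L^2(\Omega)}\le 1\right\}
\]
combined with the sub-mean-value inequality on the polydisc $D(q,|\rho(q)|)$, whose sides $\tau_j(q,|\rho(q)|)$ give $V(D(q,|\rho(q)|))\approx |\rho(q)|^2\prod_{j\ge 2}\tau_j(q,|\rho(q)|)^2$. The matching lower bound requires producing a near-extremal holomorphic peak function. Here the convex (resp. decoupled) structure is essential: one builds a plurisubharmonic support function $\psi$ on $D(q,\delta)$ using the minimal polynomials that define the $\tau_j$'s, then applies Hörmander's weighted $L^2$ estimate for $\bar\partial$ to realize a peak function in $A^2(\Omega)$ that is comparable to $V(D(q,\delta))^{-1/2}$ at $q$ and decays off $D(q,\delta)$.

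Next I would pass to the off-diagonal estimate. For fixed $q_1$, set $\delta=|\rho(q_1)|+|\rho(q_2)|+\inf\{\epsilon:q_2\in D(q_1,\epsilon)\}$, so both $q_1$ and $q_2$ lie inside (a bounded dilate of) $D(q_1,\delta)$ by the covering property \eqref{2.11}. Applying the reproducing identity and Cauchy–Schwarz gives $|K_\Omega(q_1;\bar q_2)|\le K_\Omega(q_1;\bar q_1)^{1/2}K_\Omega(q_2;\bar q_2)^{1/2}$, and since $q_2\in c D(q_1,\delta)$ the polydiscs $D(q_1,\delta)$ and $D(q_2,\delta)$ are comparable. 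Hence the on-diagonal bound at scale $\delta$, i.e.\ $K_\Omega(q_i;\bar q_i)\lesssim V(D(q_i,\delta))^{-1}$, yields
\[
|K_\Omega(q_1;\bar q_2)|\lesssim V(D(q_1,\delta))^{-1}\approx \delta^{-2}\prod_{j=2}^n\tau_j(q_1,\delta)^{-2}.
\]
Here the on-diagonal bound at the larger scale $\delta$ (rather than $|\rho(q_i)|$) comes from the fact that $\rho$ changes by at most $\delta$ on $D(q_i,\delta)$, so one can replace $|\rho(q_i)|$ by $\delta$ throughout, using \eqref{2.11}–\eqref{2.12} to absorb constants.

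The principal obstacle is the lower bound used in the on-diagonal estimate at the scale $\delta$, because one must simultaneously control peak function size, $L^2$ norm over $\Omega$, and the holomorphy across a potentially long polydisc $D(q,\delta)$ reaching well into $\Omega$. This is precisely where the convex or decoupled hypothesis enters: on a convex domain one has McNeal's support function, and on a decoupled domain one uses the factorization by variable to build the support function coordinate-wise. Granting these support functions, the $\bar\partial$ construction is standard, and the rest of the argument reduces to the polydisc bookkeeping summarized above. A technical point to track is that the special coordinates, and thus the $\tau_j(q,\delta)$, depend on the base point $q$; the covering property \eqref{2.11} guarantees that swapping $\tau_j(q_1,\delta)$ for $\tau_j(q_2,\delta)$ costs only a multiplicative constant, which is built into the $\lesssim$ in the statement.
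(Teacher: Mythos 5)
The paper does not prove Theorem \ref{thm4.4}; it is quoted from \cite{McNeal2,McNeal91,McNeal2003} (with the correction in \cite{NPT}), so your proposal is an attempt to reconstruct McNeal's argument. Your on-diagonal discussion is essentially right: the upper bound $K_\Omega(q;\bar q)\lesssim V(D(q,c|\rho(q)|))^{-1}$ does follow from the extremal characterization plus the sub-mean-value inequality on the largest polydisc contained in $\Omega$, and the matching lower bound via support functions and H\"ormander's theorem is the standard route (though the lower bound is not even needed for the stated one-sided estimate).

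The off-diagonal step, however, contains a genuine gap that breaks the proof. You assert ``the on-diagonal bound at the larger scale $\delta$, i.e.\ $K_\Omega(q_i;\bar q_i)\lesssim V(D(q_i,\delta))^{-1}$,'' justified by saying one can ``replace $|\rho(q_i)|$ by $\delta$ throughout.'' This is false whenever $\delta\gg|\rho(q_i)|$, which is exactly the interesting regime: since each $\tau_j(q,\cdot)$ is increasing, $V(D(q,\delta))\geq V(D(q,|\rho(q)|))$, so $V(D(q,\delta))^{-1}$ is a \emph{smaller} quantity than the true on-diagonal value $K_\Omega(q;\bar q)\approx V(D(q,|\rho(q)|))^{-1}$; the inequality you need points the wrong way. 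Consequently Cauchy--Schwarz only yields
\[
|K_\Omega(q_1;\bar q_2)|\leq K_\Omega(q_1;\bar q_1)^{1/2}K_\Omega(q_2;\bar q_2)^{1/2}\lesssim V(D(q_1,|\rho(q_1)|))^{-1/2}V(D(q_2,|\rho(q_2)|))^{-1/2},
\]
which carries no information about the separation of $q_1$ and $q_2$ and is far weaker than the claim. On the unit disc with $q_1=1-\epsilon$, $q_2=-(1-\epsilon)$ one has $\delta\approx 1$ and $|K_\Omega(q_1;\bar q_2)|\approx 1$, while the Cauchy--Schwarz bound is $\approx\epsilon^{-2}$: no reproducing-kernel Hilbert space argument of this type can see off-diagonal decay. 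This decay is precisely the hard content of the theorem. In McNeal's proof it is obtained not from the on-diagonal estimate but from weighted $L^2$ estimates for $\bar\partial$ (or for the $\bar\partial$-Neumann operator) with weights built from the support functions and adapted to the \emph{pair} $(q_1,q_2)$: one shows that $\|K_\Omega(\cdot;\bar q_2)\|_{L^2(\Omega\setminus D(q_2,c\delta))}$ is controlled, and then recovers the pointwise bound at $q_1$ by an interior Bergman inequality on $D(q_1,c|\rho(q_1)|)$. Without some substitute for that step, your argument does not establish the theorem.
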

 We can reformulate Theorem \ref{thm4.4} as below.
\begin{thm}\label{thm4.5}
	Let $\Omega$ be a smooth, bounded, convex (or decoupled)  domain of finite type in $\mathbb C^n$. Let $p$ be a boundary point of $\Omega$. There exists a neighborhood $U$ of $p$ so that for all $q_1,q_2\in U\cap \Omega$,
	\begin{align}\label{4.6}
	|K_\Omega(q_1;\bar q_2)|\lesssim \left(V(B_5^\#(\pi(q_1),\delta))\right)^{-1},
	\end{align}
	where $\delta=|\rho(q_1)|+|\rho(q_2)|+\inf\{\epsilon>0:q_2\in D(q_1,\epsilon)\}$. Moreover, there exists a constant $c$ such that $q_1, q_2\in B_5^\#(\pi(q_1),c\delta)$.
	\end{thm}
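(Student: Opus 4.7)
The plan is to derive Theorem \ref{thm4.5} as a geometric repackaging of Theorem \ref{thm4.4}. By Theorem \ref{thm4.4} we already have
\[
|K_\Omega(q_1;\bar q_2)|\lesssim \delta^{-2}\prod_{j=2}^{n}\tau_j(q_1,\delta)^{-2},
\]
with $\delta=|\rho(q_1)|+|\rho(q_2)|+\inf\{\epsilon>0:q_2\in D(q_1,\epsilon)\}$. The first step is to replace $\tau_j(q_1,\delta)$ by $\tau_j(\pi(q_1),\delta)$ up to constants. Since $|\rho(q_1)|\leq \delta$ and $\rho$ is smooth with non-vanishing gradient, we have $|q_1-\pi(q_1)|\lesssim\delta$, so $q_1\in D(\pi(q_1),C\delta)$ for a fixed $C$. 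Thus $D(\pi(q_1),C\delta)\cap D(q_1,\delta)\neq\emptyset$, and the covering property (\ref{2.11}) then yields $\tau_j(q_1,\delta)\approx\tau_j(\pi(q_1),\delta)$ for every $j$. Combining this with (\ref{3.2}) gives
\[
\delta^{-2}\prod_{j=2}^{n}\tau_j(q_1,\delta)^{-2}\approx \bigl(V(B_5^{\#}(\pi(q_1),\delta))\bigr)^{-1},
\]
establishing (\ref{4.6}).

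The second step is to verify the containment $q_1,q_2\in B_5^{\#}(\pi(q_1),c\delta)$ for some constant $c$. By Definition \ref{de3.3}, a point $w\in\Omega$ lies in $B_5^{\#}(\pi(q_1),c\delta)$ precisely when $\pi(w)\in B_5(\pi(q_1),c\delta)$ and $|\pi(w)-w|\leq c\delta$. For $w=q_1$ the first condition is trivial, and the second holds because $|q_1-\pi(q_1)|\lesssim|\rho(q_1)|\leq\delta$. For $w=q_2$, the normal bound $|q_2-\pi(q_2)|\lesssim|\rho(q_2)|\leq\delta$ is again immediate, so the remaining task is to show $\pi(q_2)\in B_5(\pi(q_1),c\delta)=D(\pi(q_1),c\delta)\cap\mathbf b\Omega$. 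By the definition of $\delta$, we have $q_2\in D(q_1,\delta)$, and since $q_1\in D(\pi(q_1),C\delta)$, another application of the covering property (\ref{2.11}) yields $D(q_1,\delta)\subseteq D(\pi(q_1),c_1\delta)$ for some $c_1$. Hence $q_2\in D(\pi(q_1),c_1\delta)$. Projecting onto the boundary along the normal direction costs at most a further factor proportional to $|\rho(q_2)|/\delta$ in each $\tau_j$-direction, which is absorbed by enlarging the constant; this gives $\pi(q_2)\in D(\pi(q_1),c\delta)$ as desired.

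The main obstacle I anticipate is the careful bookkeeping in the second step: strictly speaking, the polydisc $D(\pi(q_1),\delta)$ and the non-isotropic ball $B_5(\pi(q_1),\delta)$ are defined in terms of McNeal coordinates centered at boundary points, and one must check that moving the base point from $q_1$ to $\pi(q_1)$ in the normal direction only changes the widths $\tau_j$ and the polydisc containments by harmless multiplicative constants. This is exactly the content of (\ref{2.11}) and the comparability statements recorded after Lemma \ref{lem 2.7}, so no new geometric input is required—only a careful application of the covering properties of the polydiscs and the fact that the projection $\pi$ is smooth on $N_{\epsilon_0}(\mathbf b\Omega)$ by Lemma \ref{lem3.2}.
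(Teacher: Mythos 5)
Your proposal is correct and follows essentially the same route as the paper, which disposes of (\ref{4.6}) by citing (\ref{3.2}) and of the containment by "a triangle inequality argument" with the quasi-metric induced by the polydiscs; you have simply spelled out the implicit steps (comparing $\tau_j(q_1,\delta)$ with $\tau_j(\pi(q_1),\delta)$ via the engulfing property (\ref{2.11}), and chaining polydisc containments to absorb the normal displacements of $q_1$ and $q_2$ into the constant $c$).
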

Here the
estimate (\ref{4.6}) follows from (\ref{3.2}). Recall that the polydisc $D(q,\delta)$ induces a global quasi-metric \cite{McNeal3} on $\Omega$. Then a triangle inequality argument using this quasi-metric yields the containment  $q_2\in B_5^\#(\pi(q_1),c\delta)$.
\subsection{Dyadic Operator Domination}
\begin{thm} \label{thm4.6} Let $\hat K_j^k$, $K_j^k$ be the tents and kubes with respect to $d$ and $B^\#$. Let $\{\mathcal T_l\}_{l=1}^N$ be the finite collections of tents induced by $\{\mathcal Q_l\}_{l=1}^N$ in Lemma \ref{lem3.6}. Then for $p,q\in \Omega$,
\begin{align}\label{4.7}
|K_{\Omega}(p;\bar q)|\lesssim (V(\Omega))^{-1}1_{\Omega\times{\Omega}}(p,q)+ \sum_{l=1}^{N}\sum_{\hat K_j^k\in \mathcal T_l}(V(\hat K_j^k))^{-1}1_{\hat K_j^k\times{\hat K_j^k}}(p,q).
\end{align}
\end{thm}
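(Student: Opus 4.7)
The plan is to break the analysis of $|K_\Omega(p;\bar q)|$ into a near-boundary regime, where the local estimates of Section 4 supply the needed bound, and a residual regime absorbed by the global term $V(\Omega)^{-1} 1_{\Omega \times \Omega}$.

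In the main case, suppose $p, q \in \Omega$ are close to the boundary and close enough to each other for the appropriate theorem---Theorem \ref{thm4.1} for finite type in $\mathbb C^2$, Theorem \ref{thm4.3} for strictly pseudoconvex, or Theorem \ref{thm4.5} for convex/decoupled finite type---to yield a scale $\delta > 0$ and a constant $c$ depending only on $\Omega$ such that
\begin{equation*}
p, q \in B^\#(\pi(p), c\delta), \qquad |K_\Omega(p;\bar q)| \lesssim V(B^\#(\pi(p), c\delta))^{-1}.
\end{equation*}
I would then invoke Lemma \ref{lem3.8}(2) to produce an index $l \in \{1, \dots, N\}$ and a dyadic tent $\hat K_j^k \in \mathcal T_l$ with $B^\#(\pi(p), c\delta) \subseteq \hat K_j^k$ and $V(\hat K_j^k) \approx V(B^\#(\pi(p), c\delta))$. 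Since $p, q \in \hat K_j^k$, the pointwise bound becomes
\begin{equation*}
|K_\Omega(p;\bar q)| \lesssim V(\hat K_j^k)^{-1}\, 1_{\hat K_j^k \times \hat K_j^k}(p,q),
\end{equation*}
which is a single summand on the right-hand side of (\ref{4.7}).

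For the complementary regime, if the scale $\delta$ given by the local estimate already satisfies $\delta \gtrsim 1$, then by Definition \ref{de3.3} the tent $B^\#(\pi(p), c\delta)$ coincides with $\Omega$ and $V(B^\#(\pi(p), c\delta))^{-1} \approx V(\Omega)^{-1}$, which is exactly the global term in (\ref{4.7}). Alternatively, if at least one of $p, q$ lies at Euclidean distance $\geq \epsilon_0/2$ from $\mathbf b\Omega$ (so that the local theorems are inapplicable, or $p, q$ lie in disjoint boundary charts), I would combine the reproducing-kernel Cauchy--Schwarz inequality $|K_\Omega(p;\bar q)|^2 \leq K_\Omega(p;\bar p)\, K_\Omega(q;\bar q)$ with the pointwise diagonal estimates from Section 4 (applied to whichever argument remains near the boundary) to again recover a bound of the form $|K_\Omega(p;\bar q)| \lesssim V(\Omega)^{-1}$, absorbed into the first term of (\ref{4.7}).

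The principal obstacle I anticipate is the bookkeeping needed to reconcile the locality of the Section 4 estimates---each stated in a single coordinate chart around a boundary point---with the globally defined tents $B^\#$ and dyadic collections $\mathcal T_l$. The key observation making this work is that when $p$ and $q$ cannot be placed in a common local chart, their separation in the non-isotropic metric forces the local scale $\delta$ to be $\gtrsim 1$, so that $B^\#(\pi(p), c\delta)$ fills out a positive fraction of $\Omega$ and the estimate collapses into the global first term. Once this case analysis is in place, Lemma \ref{lem3.8} delivers the dyadic tent in the near-boundary regime, and summing over the finitely many shifted collections $\{\mathcal T_l\}_{l=1}^N$ yields (\ref{4.7}).
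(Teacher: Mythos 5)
Your main case coincides with the paper's argument: for $p,q$ both close to $\mathbf b\Omega$ and close to each other, Theorems \ref{thm4.1}, \ref{thm4.3}, \ref{thm4.5} give a scale $r$ with $p,q\in B^\#(\pi(p),r)$ and $|K_\Omega(p;\bar q)|\lesssim V(B^\#(\pi(p),r))^{-1}$, and Lemma \ref{lem3.8}(2) then supplies a dyadic tent of comparable volume in one of the collections $\mathcal T_l$. That part is fine.

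The gap is in your residual regime. The Cauchy--Schwarz bound $|K_\Omega(p;\bar q)|^2\leq K_\Omega(p;\bar p)K_\Omega(q;\bar q)$ cannot deliver $|K_\Omega(p;\bar q)|\lesssim V(\Omega)^{-1}$ when one of the two points stays near the boundary: the diagonal value $K_\Omega(p;\bar p)$ blows up as $p\to\mathbf b\Omega$ (for instance like $\operatorname{dist}(p,\mathbf b\Omega)^{-n-1}$ in the strictly pseudoconvex case), so the product $K_\Omega(p;\bar p)^{1/2}K_\Omega(q;\bar q)^{1/2}$ is unbounded even though the off-diagonal kernel itself is bounded. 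Already on the unit disc with $q=0$ and $p\to 1$ the Cauchy--Schwarz bound degenerates to $(1-|p|^2)^{-1}$ while the true kernel value is constant. Your other fallback---letting the local scale $\delta$ grow to order $1$ so that $B^\#(\pi(p),c\delta)=\Omega$---is also not available, because the kernel estimates of Section 4 are local statements valid only for pairs of points in a common small chart; they do not assert anything at unit scale. The tool that closes this case is Kerzman's theorem (\cite{Kerzman,Boas}): $K_\Omega$ extends smoothly to $\overline\Omega\times\overline\Omega$ away from the boundary diagonal, hence $|K_\Omega(p;\bar q)|\lesssim 1\approx V(\Omega)^{-1}$ whenever $\operatorname{dist}(p,q)\approx 1$ or $\operatorname{dist}(p,\mathbf b\Omega)+\operatorname{dist}(q,\mathbf b\Omega)\approx 1$, which is exactly how the paper absorbs this regime into the first term of (\ref{4.7}).
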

\begin{proof}It suffices to show that for every $p,q$, there exists a $\hat K^k_j\in \mathcal T_l$ for some $l$ such that $$|K_{\Omega}(p;\bar q)|\lesssim (V(\hat K_j^k))^{-1}1_{\hat K_j^k\times{\hat K_j^k}}(p,q).$$
	
When $\operatorname{dist}(p,q)\approx1$ or $\operatorname{dist}(p,\mathbf b\Omega)+\operatorname{dist}(q,\mathbf b\Omega)\approx 1$,  the pair $(p,q)$  is away from the boundary diagonal of $\Omega\times \Omega$. By Kerzman's Theorem \cite{Kerzman,Boas}, we have $$|K_{\Omega}(p;\bar q)|\lesssim 1\approx (V(\Omega))^{-1}\approx (V(\Omega))^{-1}1_{\Omega\times{\Omega}}(p,q).$$ 

We turn to the case when $\operatorname{dist}(p,q)$ and $\operatorname{dist}(p,\mathbf b\Omega)+\operatorname{dist}(q,\mathbf b\Omega)$ are both small and we may assume that both $p,q\in \Omega\cap N_{\epsilon_0}(\mathbf b\Omega)$. By Theorems \ref{thm4.1}, \ref{thm4.3}, and \ref{thm4.5}, there exists a small constant $r>0$ such that $p,q\in B^\#(\pi(p),r)$ and $$|K_{\Omega}(p;\bar q)|\lesssim (V(B^\#(\pi(p),r)))^{-1}.$$ By Lemma \ref{lem3.8}, there exists a tent $\hat K^k_j\in \mathcal T_l$ for some $l$ such that $B^\#(\pi(p),r)\subseteq \hat K^k_j$ and $$V(\hat K^k_j)\approx V(B^\#(\pi(p),r)).$$ Thus $p,q\in \hat K^k_j$ and $|K_{\Omega}(p;\bar q)|\lesssim (V(\hat K^k_j))^{-1}1_{\hat K_j^k\times{\hat K_j^k}}(p,q).$
\end{proof}
\section{Proof of  Theorem \ref{t:main}}
Given a function $h$ on $\Omega$, we set $M_h$ to be the multiplication operator by $h$:
\[M_h(f)(z):=h(z)f(z).\]
Let $\sigma$ be a weight on $\Omega$. Set $\nu(z):=\sigma^{{-p^\prime}/{p}}(z)$ where $p^\prime$ is the H\"older conjugate index of $p$. Then it follows that the operator norms of $P$ and $P^+$ on the weighted space $L^p(\Omega,\sigma)$ satisfy:
\begin{align}\label{5.1}
&\|P:L^p(\Omega, \sigma)\to L^p(\Omega, \sigma) \|=\|PM_\nu:L^p(\Omega,\nu)\to L^p(\Omega,\sigma)\|;\\&
\label{2.191}
\|P^+:L^p(\Omega, \sigma)\to L^p(\Omega, \sigma) \|=\|P^+M_\nu:L^p(\Omega,\nu)\to L^p(\Omega,\sigma)\|.
\end{align}
It suffices to prove the inequality for $\|P^+M_\nu:L^p(\Omega,\nu)\to L^p(\Omega,\sigma)\|$.

Let $\{\mathcal T_l\}_{l=1}^N$ be the finite collections of tents in Theorem \ref{thm4.6}. Then inequality (\ref{4.7}) holds: for $p,q\in \Omega$,
\begin{align}
|K_{\Omega}(p,\bar q)|\lesssim (V(\Omega))^{-1}1_{\Omega\times{\Omega}}(p,q)+\sum_{l=1}^{N}\sum_{\hat K_j^k\in \mathcal T_l}(V(\hat K_j^k))^{-1}1_{\hat K_j^k\times{\hat K_j^k}}(p,q).
\end{align}
Applying this inequality to the operator $P^+M_\nu$ yields
\begin{align}
\left|P^+M_\nu f(z)\right|=&\int_{\Omega}|K_{\Omega}(z;\bar w)\nu(w)f(w)|dV(w)
\nonumber\\\lesssim&\langle f\nu\rangle^{dV}_{\Omega}+\int_{\Omega}\sum_{l=1}^{N}\sum_{\hat K^k_j\in \mathcal T_l}\frac{1_{\hat K^k_{j}}(z)1_{\hat K^k_{j}}(w)\left|\nu(w) f(w)\right|}{V(\hat K^k_{j})}dV(w)\nonumber\\=&\langle f\nu\rangle^{dV}_{\Omega}+\sum_{l=1}^{N}\sum_{\hat K^k_j\in \mathcal T_l}{1_{\hat K^k_{j}}(z)}\langle  f\nu\rangle^{dV}_{\hat K^k_{j}}.
\end{align}
Set $Q^+_{0,\nu}(f)(z):=\langle f\nu\rangle^{dV}_{\Omega}$ and $Q^+_{l,\nu}f(z):=\sum_{\hat K^k_j\in \mathcal T_l}{1_{\hat K^k_{j}}(z)}\langle  f\nu\rangle^{dV}_{\hat K^k_{j}}$. Then it suffices to estimate the norm for $Q^+_{l,\nu}$ with $l=0,1,\dots, N$. The proof given below uses the argument for the upper bound of sparse operators in weighted theory of harmonic analysis, see for example \cite{Moen2012} and \cite{Lacey2017}. An estimate for the norm of $Q^+_{0,\nu}$ is easy to obtain by H\"older's inequality:
\begin{align}\label{5.50}
\frac{\|Q^+_{0,\nu}(f)\|^p_{L^p(\Omega,\sigma)}}{\|f\|^p_{L^p(\Omega,\nu)}}\lesssim \frac{(\langle f\nu\rangle^{dV}_{\Omega})^p\langle\sigma\rangle^{dV}_{\Omega}}{\int_\Omega|f|^p\nu dV}\lesssim \langle\sigma\rangle^{dV}_{\Omega}(\langle\nu\rangle^{dV}_{\Omega})^{p-1}.
\end{align}
Now we turn to $Q^+_{l,\nu}$ for $l\neq0$. 
Assume $p>2$. For any $g\in L^{p^\prime}(\Omega,\sigma)$,
\begin{align}\label{5.5}
\left|\left\langle Q^+_{l,\nu} f(z), g(z)\sigma(z)\right\rangle\right|=&\left|\int_{\Omega} Q^+_{l,\nu} f(z)g(z)\sigma(z)dV(z)\right|\nonumber\\=&\left|\int_{\Omega}\sum_{\hat K^k_j\in \mathcal T_l}{1_{\hat K^k_{j}}(z)}\langle  f\nu\rangle^{dV}_{\hat K^k_{j}}g(z)\sigma(z) dV(z)\right|\nonumber\\\leq &\sum_{\hat K^k_j\in \mathcal T_l}\langle  f\nu\rangle^{dV}_{\hat K^k_{j}}\int_{\hat K^k_{j}}|g(z)|\sigma (z) dV(z)\nonumber\\=&\sum_{\hat K^k_j\in \mathcal T_l}\langle  f\nu\rangle^{dV}_{\hat K^k_{j}}\langle  g\sigma\rangle^{dV}_{\hat K^k_{j}}V(\hat K^k_{j}).
\end{align}
Since $\langle  f\nu\rangle^{dV}_{\hat K^k_{j}}\langle  g\sigma\rangle^{dV}_{\hat K^k_{j}}=\langle  f\rangle^{\nu dV}_{\hat K^k_{j}}\langle\nu\rangle^{dV}_{\hat K^k_{j}}\langle g\rangle^{\sigma dV}_{\hat K^k_{j}} \langle \sigma\rangle^{dV}_{\hat K^k_{j}}$, it follows that
\begin{align}\label{5.51}
\sum_{\hat K^k_j\in \mathcal T_l}\langle  f\nu\rangle^{dV}_{\hat K^k_{j}}\langle  g\sigma\rangle^{dV}_{\hat K^k_{j}}V(\hat K^k_{j})=&\sum_{\hat K^k_j\in \mathcal T_{l}}\langle  f\rangle^{\nu dV}_{\hat K^k_{j}}\langle\nu\rangle^{dV}_{\hat K^k_{j}}\langle g\rangle^{\sigma dV}_{\hat K^k_{j}} \langle \sigma\rangle^{dV}_{\hat K^k_{j}}V(\hat K^k_{j})\nonumber\\=&\sum_{\hat K^k_j\in \mathcal T_{l}}\left(\langle\nu \rangle^{dV}_{\hat K^k_{j}}\right)^{p-1}  \langle \sigma\rangle^{dV}_{\hat K^k_{j}}\langle  f\rangle^{\nu dV}_{\hat K^k_{j}}\langle g\rangle^{\sigma dV}_{\hat K^k_{j}}V(\hat K^k_{j})\left(\langle\nu\rangle^{dV}_{\hat K^k_{j}}\right)^{2-p}.
\end{align}
 Then \begin{align}\label{5.6}
&\sum_{\hat K^k_j\in \mathcal T_{l}}\left(\langle\nu \rangle^{dV}_{\hat K^k_{j}}\right)^{p-1}  \langle \sigma\rangle^{dV}_{\hat K^k_{j}}\langle  f\rangle^{\nu dV}_{\hat K^k_{j}}\langle g\rangle^{\sigma dV}_{\hat K^k_{j}}V(\hat K^k_{j})\left(\langle\nu\rangle^{dV}_{\hat K^k_{j}}\right)^{2-p}\nonumber\\\leq&\sup_{1\leq l\leq N}\sup_{\hat K^k_j\in \mathcal T_{l}}\langle\sigma \rangle^{dV}_{\hat K^k_j}\left(\langle \nu\rangle^{dV}_{\hat K^k_j}\right)^{p-1}\sum_{\hat K^k_j\in \mathcal T_{l}}\langle  f\rangle^{\nu dV}_{\hat K^k_{j}}\langle g\rangle^{\sigma dV}_{\hat K^k_{j}}\left(V(\hat K^k_{j})\right)^{p-1}\left(\nu(\hat K^k_{j})\right)^{2-p}.
\end{align}
By Lemma \ref{3.10}, one has $V(\hat K^k_j)\approx V(K^k_j)$. The fact that $p\geq 2$ and the containment $K^k_{j}\subseteq \hat K^k_{j}$ gives the inequality
$\left(\nu(\hat K^k_{j})\right)^{2-p}\leq\left(\nu( K^k_{j})\right)^{2-p}$. This inequality yields:
\begin{align}
\left(V(\hat K^k_{j})\right)^{p-1}\left(\nu(\hat K^k_{j})\right)^{2-p}\lesssim \left(V( K^k_{j})\right)^{p-1}\left(\nu( K^k_{j})\right)^{2-p}.
\end{align}
By H\"older's inequality, $$V(K^k_{j})\leq \left(\nu(K^k_{j})\right)^{\frac{1}{p^\prime}}\left(\sigma({K^k_{j}})\right)^{\frac{1}{p}}.$$
Therefore,
\begin{equation}
 \left(V( K^k_{j})\right)^{p-1}\left(\nu( K^k_{j})\right)^{2-p}\leq \left(\nu( K^k_{j})\right)^{\frac{1}{p}}\left(\sigma({K^k_{j}})\right)^{\frac{1}{p^\prime}}.
\end{equation} 
Substituting these inequalities into the last line of (\ref{5.6}), we obtain
\begin{align*}
&\sum_{\hat K^k_j\in \mathcal T_{l}}\langle  f\rangle^{\nu dV}_{\hat K^k_{j}}\langle g\rangle^{\sigma dV}_{\hat K^k_{j}}\left(V(\hat K^k_{j})\right)^{p-1}\left(\nu(\hat K^k_{j})\right)^{2-p}\lesssim\mathcal \sum_{\hat K^k_j\in \mathcal T_{l}}\langle  f\rangle^{\nu dV}_{\hat K^k_{j}}\langle g\rangle^{\sigma dV}_{\hat K^k_{j}}\left(\nu( K^k_{j})\right)^{\frac{1}{p}}\left(\sigma({K^k_{j}})\right)^{\frac{1}{p^\prime}}.
\end{align*}
Applying H\"older's inequality again the sum above gives
\begin{align}\label{5.80}
&\sum_{\hat K^k_j\in \mathcal T_{l}}\langle  f\rangle^{\nu dV}_{\hat K^k_{j}}\langle g\rangle^{\sigma dV}_{\hat K^k_{j}}\left(\nu( K^k_{j})\right)^{\frac{1}{p}}\left(\sigma({K^k_{j}})\right)^{\frac{1}{p^\prime}}\nonumber
\\\leq&\left(\sum_{\hat K^k_j\in \mathcal T_{l}}\left(\langle  f\rangle^{\nu dV}_{\hat K^k_{j}}\right)^{p}\nu(K^k_{j})\right)^{\frac{1}{p}}\left(\sum_{\hat K^k_j\in \mathcal T_{l}}\left(\langle g\rangle^{\sigma dV}_{K^k_{j}}\right)^{p^\prime}\sigma({K^k_{j}})\right)^{\frac{1}{p^\prime}}.
\end{align}
By the disjointness of $K^k_{j}$ and Lemma \ref{lem3.12}, we have 
\begin{equation}\label{5.8}
\sum_{\hat K^k_j\in \mathcal T_{l}}\left(\langle  f\rangle^{\nu dV}_{\hat K^k_{j}}\right)^{p}\nu( K^k_{j})\leq \int_{\Omega} (\mathcal M_{\mathcal T_{l},\nu}f)^p\nu dV\leq (p^\prime)^{p}\|f\|^{p}_{L^p(\Omega,\nu)}.
\end{equation}
Similarly, we also have
\begin{equation}\label{5.9}
\sum_{\hat K^k_j\in \mathcal T_{l}}\left(\langle  g\rangle^{\sigma dV}_{\hat K^k_{j}}\right)^{p^\prime}\sigma( K^k_{j})\leq \int_{\Omega} (\mathcal M_{\mathcal T_{l},\sigma}g)^{p^\prime}\sigma dV\leq (p)^{p^\prime}\|g\|^{p^\prime}_{L^{p^\prime}(\Omega,\sigma)}.
\end{equation}
Substituting (\ref{5.8}) and (\ref{5.9}) back into (\ref{5.80}) and (\ref{5.5}), we finally obtain
\begin{equation*}
\left\langle Q^+_{l,\nu} f, g\sigma\right\rangle\lesssim pp^\prime \sup_{1\leq l\leq N}\sup_{\hat K^k_j\in \mathcal T_{l}}\langle\sigma \rangle^{dV}_{\hat K^k_j}\left(\langle \nu\rangle^{dV}_{\hat K^k_j}\right)^{p-1}\|f\|_{L^p(\Omega,\nu )} \|g\|_{L^{p^\prime}(\Omega,\sigma)}.
\end{equation*}
Therefore $\sum_{l=1}^N\|Q^+_{l,\nu}\|_{L^p(\Omega, \sigma)}\lesssim pp^\prime \sup_{1\leq l\leq N}\sup_{\hat K^k_j\in \mathcal T_{l}}\langle\sigma \rangle^{dV}_{\hat K^k_j}\left(\langle \nu\rangle^{dV}_{\hat K^k_j}\right)^{p-1}$.

Now we turn to the case $1<p<2$ and show that for all $f\in L^p(\Omega,\nu)$ and $g\in L^{p^\prime}(\Omega,\sigma)$
\begin{equation*}
\left \langle Q^+_{l,\nu} f, g\sigma\right\rangle\lesssim\left(\sup_{1\leq l\leq N}\sup_{\hat K^k_j\in \mathcal T_{l}}\langle\sigma \rangle^{dV}_{\hat K^k_j}\left(\langle \nu\rangle^{dV}_{\hat K^k_j}\right)^{p-1}\right)^{\frac{1}{p-1}}\|f\|_{L^p(\Omega,\nu)} \|g\|_{L^{p^\prime}(\Omega,\sigma)}.
\end{equation*}
By the definition of $Q^+_{l,\nu}$,
\begin{align}\label{5.11}
\left \langle Q^+_{l,\nu} f, g\sigma\right\rangle&=\left\langle\sum_{\hat K^k_j\in \mathcal T_{l}}1_{\hat K^k_{j}}(w)\langle  f\nu\rangle^{dV}_{\hat K^k_{j}},g\sigma\right\rangle\nonumber\\&=\sum_{\hat K^k_j\in \mathcal T_{l}}\langle  f\nu\rangle^{dV}_{\hat K^k_{j}}\langle g\sigma\rangle^{dV}_{\hat K^k_{j}}V(\hat K^k_{j})\nonumber
\\&=\sum_{\hat K^k_j\in \mathcal T_{l}}\left\langle 1_{\hat K^k_{j}}(w)\langle  g\sigma\rangle^{dV}_{\hat K^k_{j}},f\nu\right\rangle
\nonumber\\&=\left\langle Q^+_{l,\sigma}(g),f\nu\right\rangle.
\end{align}Since $1<p<2$, $p^\prime>2$. Then, replacing $p$ by $p^\prime$, interchanging $\sigma$ and $\nu$, and adopting the same argument for the case $p\geq2$ yields that
\begin{align*}
\|Q^+_{l,\sigma} \|_{L^{p^\prime}(\Omega,\nu )}&\lesssim p^\prime p\sup_{1\leq l\leq N}\sup_{\hat K^k_j\in \mathcal T_{l}}\langle\nu \rangle^{dV}_{\hat K^k_j}\left(\langle \sigma\rangle^{dV}_{\hat K^k_j}\right)^{p^\prime-1}
\nonumber\\&=pp^\prime\left(\sup_{1\leq l\leq N}\sup_{\hat K^k_j\in \mathcal T_{l}}\langle\sigma \rangle^{dV}_{\hat K^k_j}\left(\langle \nu\rangle^{dV}_{\hat K^k_j}\right)^{p-1}\right)^{\frac{1}{p-1}}.
\end{align*}
Thus we have $$\left \langle Q^+_{l,\nu} f, g\sigma\right\rangle\lesssim pp^\prime\left(\sup_{1\leq l\leq N}\sup_{\hat K^k_j\in \mathcal T_{l}}\langle\sigma \rangle^{dV}_{\hat K^k_j}\left(\langle \nu\rangle^{dV}_{\hat K^k_j}\right)^{p-1}\right)^\frac{1}{p-1}\|g\|_{L^{p^\prime}(\Omega,\sigma )}\|f\|_{L^p(\Omega,\nu)},$$ and $$\|Q^+_{l,\nu}:L^p(\Omega,\nu)\to L^p(\Omega,\sigma)\|\lesssim pp^\prime\left(\sup_{1\leq l\leq N}\sup_{\hat K^k_j\in \mathcal T_{l}}\langle\sigma \rangle^{dV}_{\hat K^k_j}\left(\langle \nu\rangle^{dV}_{\hat K^k_j}\right)^{p-1}\right)^{\frac{1}{p-1}}.$$
Combining the results for $Q^+_{l,\nu}$ with $l\neq 0$ for $1<p< 2$ and $p\geq2$ and inequality (\ref{5.50}) for $Q^+_{0,\nu}$ gives the estimate in Theorem \ref{t:main}: $$\|P^+\|_{L^p(\Omega,\sigma)}\lesssim [\sigma]_p.$$
\section{A sharp example}
In this section, we provide an example to show that the estimate in Theorem \ref{t:main} is sharp. Our example is for the case $1<p\leq2$. The case $p>2$ follows by a duality argument.
The idea is similar to the ones in \cite{Pott,Rahm}.  Since $\Omega$ is a pseudoconvex domain of finite type, Kerzman's Theorem \cite{Kerzman,Boas} implies that the kernel function $K_\Omega$ extends to a $C^\infty$ function away from a neighborhood of the boundary diagonal of $\Omega\times\Omega$.  Let $w_\circ\in \Omega$ be a point that is away from the set  $N_{\epsilon_0}(\mathbf b\Omega)$ and satisfies $K_\Omega(w_\circ;\bar w_\circ)=2C_1>0$ for some constant $C_1$. Then the maximum principle implies that $\{z\in\mathbf b\Omega:|K_\Omega(z;\bar w_\circ)|>C_1\}$ is a non-empty open subset of $\mathbf b\Omega$. We claim that there exists a strictly pseudoconvex point $z_\circ$ in the set $\{z\in\mathbf b\Omega:|K_\Omega(z;\bar w_\circ)|>C_1\}$.  Let $p$ be a point in $\{z\in\mathbf b\Omega:|K_\Omega(z;\bar w_\circ)|>C_1\}$. Since $p$ is a point of finite 1-type in the sense of D'Angelo, the determinant of the Levi form does not vanish identically near point $p$. Thus the determinant of the Levi form is strictly positive at some point in every neighborhood of $p$, i.e., there is a sequence of strictly pseudoconvex points converging to $p$.  Since $\{z\in\mathbf b\Omega:|K_\Omega(z;\bar w_\circ)|>C_1\}$ is a neighborhood of $p$,
 there exists a strictly pseudoconvex  point $z_\circ$ such that \begin{equation}\label{7.10}|K_{\Omega}(z_\circ;\bar w_\circ)|> C_1.\end{equation}  There are several possible proofs in the literature for the existence of the nonvanishing point of the determinant of the Levi form near a point of finite type. See for example \cite{Nicoara} or the forthcoming thesis of Fassina \cite{Fassina}. Nevertheless, we choose the strictly pseudoconvex point $z_\circ$ above only for the simplicity of the construction of our example and it is not required. See Remark \ref{re 6.1} below.  By Kerzman's Theorem again,  there exists a small constant $\delta_0$ such that for any pair of points $(z,w)\in B^\#(z_\circ,\delta_0)\times \{w\in\Omega:\operatorname{dist}(w,w_\circ)<\delta_0\}$,
\[|K_\Omega(z, \bar w)-K_{\Omega}(z_\circ;\bar w_\circ)|\leq C_1/10.\]
Thus for $(z,w)\in B^\#(z_\circ,\delta_0)\times \{w\in\Omega:\operatorname{dist}(w,w_\circ)<\delta_0\}$, one has
\[|K_\Omega(z, \bar w)|\approx C_1.\]
 Moreover, elementary geometric reasoning yields that \begin{align}\label{7.11}\arg\{K_{\Omega}(z;\bar w),K_{\Omega}(z_\circ,w_\circ)\}\in [-\sin^{-1}(1/10),\sin^{-1}(1/10)].\end{align} From now on, we let $\delta_0$ be a  fixed constant.
For $z\in\Omega$, we set
\begin{align}\label{6.3}
h(z)=\inf\{\delta>0:z\in B^\#(z_\circ,\delta)\}, \text{ and }\;\;l(z)=\operatorname{dist}(z,w_\circ).
\end{align}
Let $1<p\leq 2$. Let $s$ be a positive constant that is sufficiently close to $0$. We define the weight function $\sigma$ on $\Omega$ to be
\begin{align}
\sigma(w)=\frac{(h(w))^{(p-1)(2+2n-2s)}}{(l(w))^{2n-2s}}.
\end{align}
We claim that the constant $ [\sigma]_p\approx s^{-1}$. 

First, we consider the average of $\sigma$ and $\sigma^{\frac{1}{1-p}}$ over the tent $B^\#(z,\delta)$. Note that \[\sigma^{\frac{1}{1-p}}(w)=\frac{(h(w))^{(2s-2n-2)}}{(l(w))^{(2s-2n)/(p-1)}}.\] If the tent $B^\#(z,\delta)$ does not intersect $B^\#(z_\circ,\delta)$, then for any $w\in B^\#(z,\delta)$ we have  $h(w)\approx x+\delta$ and $l(w)\approx 1$ with $x\gtrsim \delta$. Thus we have 
\begin{equation}\label{7.4}\langle \sigma\rangle^{dV}_{B^\#(z,\delta)}\left(\langle \sigma^{\frac{1}{1-p}}\rangle^{dV}_{B^\#(z,\delta)}\right)^{p-1}\approx {(x+\delta)^{(p-1)(2+2n-2s)}}\left({(x+\delta)^{(2s-2n-2)}}\right)^{p-1}= 1.\end{equation} If $B^\#(z,\delta)$ intersects $B^\#(z_\circ,\delta)$, then there exists a constant $C$ so that $B^\#(z_\circ,C\delta)$ contains $B^\#(z,\delta)$ with $|B^\#(z_\circ,C\delta)|\approx |B^\#(z,\delta)|$ by the doubling property of the ball $B^\#$.
Hence 
\[\langle \sigma\rangle^{dV}_{B^\#(z,\delta)}\left(\langle \sigma^{\frac{1}{1-p}}\rangle^{dV}_{B^\#(z,\delta)}\right)^{p-1}\lesssim \langle \sigma\rangle^{dV}_{B^\#(z_\circ,C\delta)}\left(\langle \sigma^{\frac{1}{1-p}}\rangle^{dV}_{B^\#(z_\circ,C\delta)}\right)^{p-1}.\]
Since $w_\circ$ is away from  the set $N_{\epsilon_0}(\mathbf b\Omega)$ and hence is away from any tents, $l(w)\approx 1$ for any $w\in B^\#(z_\circ,C\delta)$. It follows that
\[\langle \sigma\rangle^{dV}_{B^\#(z_\circ,C\delta)}\approx\int_{B^\#(z_\circ,C\delta)}h(w)^{(p-1)(2n+2-2s)}dV(w)(V(B^\#(z,\delta)))^{-1}.\] 
Recall that  $z_\circ$ is a strictly pseudoconvex point. There exist special holomorphic coordinates $(\zeta_1,\dots,\zeta_n)$ in a neighborhood of $z_\circ$ as in Theorem \ref{thm4.2} so that $z_\circ=(0,\dots, 0)$ and the tent
\[D(z_\circ,\delta):=\{w=(\zeta_1,\dots,\zeta_n)\in \Omega:|\zeta_n|<\delta^2, |\zeta_j|<\delta, j=1,\dots, n-1\},\] 
is equivalent to $B^\#(z_\circ,C\delta)$ in the sense that there exist constants $c_1$ and $c_2$ so that \[D(z_\circ,c_1\delta)\subseteq B^\#(z_\circ,C\delta)\subseteq D(z_\circ,c_2\delta).\]
Moreover, $h(w)\approx (|\zeta_1|^2+\cdots+|\zeta_{n-1}|^2+|\zeta_n|)^{\frac{1}{2}}$. Therefore 
\begin{equation}
\label{6.5}
\langle \sigma\rangle^{dV}_{B^\#(z_\circ,C\delta)}\approx\int_{B^\#(z_\circ,C\delta)}h(w)^{(p-1)(2n+2-2s)}dV(w)(V(B^\#(z,\delta)))^{-1}\approx\delta^{(p-1)(2n+2-2s)}.\end{equation}
Similarly,
\begin{equation}\label{6.6}\langle \sigma^{\frac{1}{1-p}}\rangle^{dV}_{B^\#(z_\circ,C\delta)}\approx\int_{B^\#(z_\circ,C\delta)}h(w)^{2s-2n-2}dV(w)(V(B^\#(z,\delta)))^{-1}\approx s^{-1}\delta^{(2s-2n-2)},\end{equation}
where $s^{-1}$ comes from the power rule $\int_0^a t^{s-1}dt={a^s}/{s}$.
Combining these inequalities yields
\begin{equation}\label{7.5}\langle \sigma\rangle^{dV}_{B^\#(z,\delta)}\left(\langle \sigma^{\frac{1}{1-p}}\rangle^{dV}_{B^\#(z,\delta)}\right)^{p-1}\lesssim\langle \sigma\rangle^{dV}_{B^\#(z_\circ,C\delta)}(\langle \sigma^{\frac{1}{1-p}}\rangle^{dV}_{B^\#(z_\circ,C\delta)})^{p-1}\approx s^{-(p-1)}.\end{equation}

 Now we turn to the average of $\sigma$ and $\sigma^{\frac{1}{1-p}}$ over the entire domain $\Omega$. Note that $$\langle \sigma\rangle^{dV}_{\Omega}\approx\int_\Omega l(w)^{2s-2n}d V(w).$$ A computation using polar coordinates yields that
$\langle \sigma\rangle^{dV}_{\Omega} \approx s^{-1}$. Also, 
\begin{align*}\langle \sigma^{\frac{1}{1-p}}\rangle^{dV}_{\Omega}&\approx\int_\Omega h(w)^{2s-2n-2}d V(w)\\&=\int_{B^\#(z_\circ,\delta_0)} h(w)^{2s-2n-2}d V(w)+\int_{\Omega\backslash B^\#(z_\circ,\delta_0)} h(w)^{2s-2n-2}d V(w)\\&\approx \delta_0^{2s-2n-2}s^{-1}+ \delta_0^{2s-2n-2}\approx s^{-1},\end{align*}
 where the third approximation sign follows by $s$ being sufficiently small and $\delta_0$ being a fixed constant. Thus 
\begin{equation}\label{7.6}\langle \sigma\rangle^{dV}_{\Omega}\left(\langle \sigma^{\frac{1}{1-p}}\rangle^{dV}_{\Omega}\right)^{p-1}\approx s^{-1}(s^{-1})^{p-1}\approx s^{-p}.\end{equation}
This estimate together with inequalities (\ref{7.4}) and (\ref{7.5}) yields that $[\sigma]_p\approx s^{-1}$. 

Now we consider the function 
\[f(w)=\sigma^{\frac{1}{1-p}}(w)1_{B^\#(z_\circ,\delta_0)},\]
where $\delta_0$ is the same fixed constant so that (\ref{7.11}) holds.
Since $z_\circ$ is a point away from $w_\circ$, $l(w)\approx 1$ for $w\in B^\#(z_\circ,\delta_0)$. Thus
\[\|f\|^p_{L^p(\Omega,\sigma)}=\langle \sigma^{\frac{1}{1-p}}\rangle^{dV}_{B^\#(z_\circ,\delta_0)}V(B^\#(z_\circ,\delta_0))\approx s^{-1}.\]
When $z\in \{w\in\Omega:\operatorname{dist}(w,w_\circ)<\delta_0\}$, (\ref{7.10}) and (\ref{7.11}) imply that
\begin{align}\label{6.9}
|P(f)(z)|&=\left|\int_{B^\#(z_\circ,\delta_0)}K_\Omega(z;\bar w)f(w)dV(w)\right|\nonumber\\&\approx \int_{B^\#(z_\circ,\delta_0)}|K_\Omega(z_0;\bar w_0)|f(w)dV(w)\nonumber\\&\approx \int_{B^\#(z_\circ,\delta_0)}f(w)dV(w)=\langle\sigma^{\frac{1}{1-p}}\rangle^{dV}_{B^\#(z_\circ,\delta_0)}V(B^\#(z_\circ,\delta_0))\approx s^{-1}.
\end{align}
By (\ref{6.9}) and the fact that $\delta_0\approx 1$, we obtain the desired estimate:
\begin{align}
\|P(f)\|^p_{L^p(\Omega,\sigma)}&=\int_\Omega|P(f)(z)|^p\sigma(z)dV(z)\nonumber\\&\geq\int_{\{z\in\Omega:\operatorname{dist}(z,w_\circ)<\delta_0\}}|P(f)(z)|^p\sigma(z)dV(z)\nonumber\\
&\gtrsim s^{-p}\int_{\{z\in\Omega:\operatorname{dist}(z,w_\circ)<\delta_0\}} (\operatorname{dist}(z,w_\circ))^{2s-2n}dV(z)\nonumber\\&\approx s^{-p}s^{-1}\approx \mathcal ([\sigma]_p)^p\|f\|^p_{L^p(\Omega,\sigma)}.
\end{align}
\begin{rmk}In the particular case of the unit ball $\mathbb{B}_n$ we can make our example more explicit:  the weight $\sigma(w)=|w-z_\circ|^{(p-1)(2+2n-2s)}/|w|^{2n-2s}$ with $z_\circ=(1,0,\dots,0)$ and the test function $f(w)=\sigma^{\frac{1}{1-p}}(w)1_{B^\#(z_\circ,1/2)}(w)$.  One can compute explicitly in this case that $\sigma$ is in the $\mathcal B_p$ class.
	We further remark that in \cite{Rahm}, the authors produce an upper and lower bound in terms of a Bekoll\'{e}-Bonami condition that doesn't utilize information about the large tents.  The upper bound they produce is correct, however the claimed sharpness of the Bekoll\'e-Bonami condition without testing the large tents is not quite correct.  The example they construct does appropriately capture the behavior of small tents, but fails to do so in the case of large tents and this characteristic fails to capture the sharpness.  It is for this reason that we have had to modify the definition of the Bekoll\'{e}-Bonami characteristic in Definition \ref{de3.4} to reflect the behavior of both large and small tents.\end{rmk}

\begin{rmk}\label{re 6.1}In this example, we require $z_\circ$ to be a strictly pseudoconvex point only for the simplicity of the construction of the weight $\sigma$ and the test function $f$, and the computation. For every $z\in\mathbf b\Omega$, the geometry of the tent $B^\#(z,\delta)$  is well understood. Thus $\sigma$ and  $f$ can be modified accordingly so that the estimate (\ref{7.5}) still holds true. 
	\end{rmk}

\begin{rmk}
	For a different example, we can  also choose $z_\circ$ to be a point in $\Omega$ that is away from both $N_{\epsilon_0}(\mathbf b\Omega)$  and $w_\circ$, and change $h(w)$ in (\ref{6.3}) to be $(\operatorname{dist}(z_\circ, w))^{(p-1)(2n-2s)}$. The average of $\sigma$ and $\sigma^{\frac{1}{1-p}}$ over tents is controlled by a constant since all tents are away from points $z_\circ$ and $w_\circ$. Moreover,  $\langle\sigma\rangle^{dV}_\Omega(\langle\sigma^{\frac{1}{p-1}}\rangle^{dV}_\Omega)^{p-1}\approx s^{-p}$ by a computation using polar coordinates.
Thus $[\sigma]_p\approx s^{-1}$ and a similar argument yields the sharpness of the bound in Theorem \ref{t:main}. We did not adopt this example since  it does not reflect the connection between the weighted norm of the projection and  the average of $\sigma$ and $\sigma^{\frac{1}{1-p}}$ over small tents.
 \end{rmk}
\begin{rmk}When the weight $\sigma\equiv1$, the constant $[\sigma]_p\approx pp^\prime$. Theorem \ref{t:main} then gives an estimate for the $L^p$ norm of the Bergman projection: \[\|P\|_{L^p(\Omega)}\lesssim pp^\prime.\]	For the strictly pseudoconvex case, such an estimate was obtained and proven to be sharp by \v{C}u\v{c}kovi\'{c} \cite{Cuckovic17}.  Therefore, the constant $pp^\prime$ in  $[\sigma]_p$ is necessary.\end{rmk}
\section{Proof of Theorem \ref{t:main1}}
We first show that the lower bound (\ref{1.2}) in Theorem \ref{t:main1} with the assumption that $\Omega$ is bounded, smooth, and strictly pseudoconvex.

We begin by recalling the following two lemmas from \cite{HWW}.
\begin{lem}\label{Lem7.0}
	Let $\Omega$ be a smooth, bounded, strictly pseudoconvex domain. If the Bergman projection $P$ is bounded on the weighted space $L^p(\Omega,\sigma)$, then the weight $\sigma$ and its dual weight $\nu=\sigma^{\frac{1}{1-p}}$ are integrable on $\Omega$.
\end{lem}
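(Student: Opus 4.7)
The plan is to combine a self-adjointness/duality reduction with a direct test against the Bergman kernel at an interior point.

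First I would exploit that $P$ is self-adjoint on $L^2(\Omega)$: the identity $\langle Pf,g\sigma\rangle_{L^2}=\langle f,P(g\sigma)\rangle_{L^2}$, together with the substitution $h=g\sigma$, shows that the operator norm of $P$ on $L^p(\Omega,\sigma)$ equals its operator norm on $L^{p'}(\Omega,\nu)$. Consequently it suffices to prove $\sigma\in L^1(\Omega)$; the integrability of $\nu$ will then follow by running the same argument with the roles of $(p,\sigma)$ and $(p',\nu)$ swapped.

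Next, fix an interior reference point $z_0\in\Omega$ (a Lebesgue point of $\sigma$ with $\sigma(z_0)>0$) and consider the point-evaluation functional $L_{z_0}(f):=P(f)(z_0)=\int_{\Omega}K_\Omega(z_0,\bar w)f(w)\,dV(w)$. Since $P(f)$ is holomorphic on $\Omega$ for every $f\in L^p(\Omega,\sigma)$, the sub-mean value property for $|P(f)|^p$ on a small ball $B(z_0,r)\Subset\Omega$, combined with the positivity and local integrability of $\sigma$ near $z_0$, controls $|P(f)(z_0)|$ by a constant times $\|P(f)\|_{L^p(\sigma)}\le\|P\|\,\|f\|_{L^p(\sigma)}$. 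Via Riesz representation applied to the dual pairing $(L^p(\Omega,\sigma))^*=L^{p'}(\Omega,\sigma)$, this boundedness is equivalent to $K_\Omega(z_0,\bar{\cdot})\in L^{p'}(\Omega,\nu)$. Now Kerzman's theorem guarantees that for interior $z_0$ the kernel $K_\Omega(z_0,\bar{\cdot})$ extends smoothly to $\bar\Omega$, and since $K_\Omega(z_0,\bar z_0)>0$ it is bounded below by a positive constant on some neighborhood $U$ of $z_0$, giving $\int_U \nu\,dV<\infty$.

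To promote this local integrability to $\nu\in L^1(\Omega)$, I would repeat the point-evaluation argument with reference points $z_0(w^\ast,\delta)\in\Omega$ indexed by $w^\ast\in\mathbf b\Omega$ and small $\delta>0$, chosen so that $\pi(z_0)=w^\ast$ and $|z_0-\pi(z_0)|\approx\Lambda(w^\ast,\delta)$. The Fefferman--Monvel asymptotic expansion on strictly pseudoconvex domains (Theorem \ref{thm4.2}) yields the uniform lower bound $|K_\Omega(z_0,\bar w)|\gtrsim V(B^\#(w^\ast,\delta))^{-1}$ for $w\in B^\#(w^\ast,\delta)$, which converts the boundedness of $L_{z_0}$ into a quantitative bound on $\nu(B^\#(w^\ast,\delta))$. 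Covering $\Omega$ by countably many such tents, together with the doubling property from Section 3, then yields $\nu\in L^1(\Omega)$.

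The main obstacle is the propagation step: the constant in the sub-mean value estimate degrades as $z_0$ approaches $\mathbf b\Omega$, and must be balanced against the favorable growth of $K_\Omega$ there. A cleaner shortcut---particularly appropriate since this lemma is recalled from \cite{HWW}---is to invoke the qualitative conclusion of that paper, where the authors show that the hypothesis already forces the full Bekoll\'e--Bonami condition, including finiteness of the whole-domain average $\langle\sigma\rangle^{dV}_\Omega\bigl(\langle\nu\rangle^{dV}_\Omega\bigr)^{p-1}$; this immediately yields $\sigma,\nu\in L^1(\Omega)$.
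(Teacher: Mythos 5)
The paper offers no proof of this lemma at all --- it is recalled verbatim from \cite{HWW} --- so the only comparison available is with your argument on its own terms, and there is a genuine gap at its central step. You assert that the point-evaluation functional $f\mapsto P(f)(z_0)$ is bounded on $L^p(\Omega,\sigma)$ because of ``the sub-mean value property for $|P(f)|^p$ \dots combined with the positivity and local integrability of $\sigma$ near $z_0$.'' Positivity a.e.\ plus local integrability of $\sigma$ does not give an essential lower bound for $\sigma$ on any ball or annulus around $z_0$, and without such a bound the needed inequality $\int_B|Pf|^p\,dV\lesssim\int_B|Pf|^p\sigma\,dV$ fails; for a general positive, locally integrable weight, point evaluations on the weighted Bergman space need not be bounded (this is precisely why $A^p(\sigma)$ can fail to be a Banach space). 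The standard repair --- write $\int_B|Pf|\,dV\le\|Pf\|_{L^p(B,\sigma)}\,\nu(B)^{1/p'}$ by H\"older and then use the sub-mean value property of $|Pf|$ --- presupposes $\nu\in L^1_{\mathrm{loc}}$ near $z_0$, which is part of what you are trying to prove. Everything downstream (the Riesz-representation step and the conclusion $K_\Omega(z_0,\bar{\cdot})\in L^{p'}(\Omega,\nu)$) rests on this unestablished bound.

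The globalization step is also not right as stated: Theorem \ref{thm4.2} is an \emph{upper} bound for $|K_\Omega|$, whereas the uniform lower bound $|K_\Omega(z_0,\bar w)|\gtrsim V(B^\#(w^\ast,\delta))^{-1}$ over an entire tent is nontrivial --- one must control the argument of the kernel, which is exactly the content of Lemma \ref{Lem7.1} (and there only for a pair of separated tents) --- and you concede that the degeneration of the point-evaluation constant as $z_0\to\mathbf b\Omega$ is unresolved. The proposed shortcut of invoking the necessity of the full Bekoll\'e--Bonami condition from \cite{HWW} is circular in spirit, since the integrability of $\sigma$ and $\nu$ is an ingredient of that necessity proof. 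A route that actually closes the argument, and is consistent with how Section 7 is organized, is to test $PM_\nu$ (resp.\ $P$) against the truncations $1_{B^\#(z_1,\delta_0)}1_{\{\nu\le n\}}$ and $1_{B^\#(z_1,\delta_0)\cap\{\sigma\le n\}}$, use Lemma \ref{Lem7.1} to get a lower bound for $|P(\cdot)|$ on the companion tent $B^\#(z_2,\delta_0)$, deduce uniform bounds on $\nu(B^\#(z_1,\delta_0))$ and $\sigma(B^\#(z_1,\delta_0))$ as $n\to\infty$, and then cover $\Omega$ by finitely many such tents together with a compact interior piece on which local integrability of $\sigma$, and a separate interior kernel estimate for $\nu$, finish the job.
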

	
\begin{lem}\label{Lem7.1}
	Let $\Omega$ be a smooth, bounded, strictly pseudoconvex domain. 	Let $\delta$ be a small constant. For a boundary point $z_1$, let $B^\#(z_1,\delta)$ be a tent defined as in Definition \ref{de3.3}. Then there exists a tent $B^\#(z_2,\delta)$ with $d(B(z_1,\delta),B(z_2,\delta))\approx \delta$ so that if $f\geq 0$ is a function supported in $B^\#(z_i,\delta)$ and $z\in B^\#(z_j,\delta)$ with $i\neq j$ and $i,j\in \{1,2\}$, then we have
	$$|P(f)(z)|\gtrsim \langle f\rangle^{dV}_{B^\#(z_i,\delta)}.$$
\end{lem}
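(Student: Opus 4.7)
The plan is to leverage the asymptotic expansion of the Bergman kernel on strictly pseudoconvex domains due to Fefferman and Boutet de Monvel--Sj\"ostrand,
\[
K_\Omega(z,\bar w) \ =\ \frac{a(z,w)}{\Phi(z,w)^{n+1}} \ +\ b(z,w)\log \Phi(z,w),
\]
where $a,b$ are smooth with $a$ nonvanishing on the boundary diagonal and $\Phi(z,w)$ is an almost-holomorphic extension of $-\rho$ with $\Phi(z,z)=-\rho(z)$. The size estimate in Theorem \ref{thm4.2} is not enough here, because we need to know both the modulus \emph{and} the argument of $K_\Omega$ on a carefully chosen off-diagonal region.

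Given $z_1\in\mathbf b\Omega$, I would work in the Fefferman-type holomorphic coordinates $(\zeta_1,\dots,\zeta_n)$ at $z_1$ from Theorem \ref{thm4.2} and pick $z_2\in\mathbf b\Omega$ to be the nearby boundary point whose $\zeta_1$-coordinate equals $\delta$, with the remaining tangential coordinates zero and the $\zeta_n$-coordinate forced by $\rho=0$ (a normal correction of size $O(\delta^2)$). Corollary \ref{Cor2.7} then yields $d_1(z_1,z_2)\approx\delta$. For an absolute constant $c>0$ (to be taken small), the tents $B^\#(z_1,c\delta)$ and $B^\#(z_2,c\delta)$ are disjoint and separated by sub-Riemannian distance $\approx\delta$. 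On the product $B^\#(z_1,c\delta)\times B^\#(z_2,c\delta)$, Theorem \ref{thm4.3} together with the strictly pseudoconvex identity $V(B^\#(z,\delta))\approx \delta^{2n+2}$ gives $|\Phi(z,w)|\approx \delta^2$ and hence $|K_\Omega(z,\bar w)|\approx V(B^\#(z_i,c\delta))^{-1}$.

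The decisive step, and the main obstacle, is to show that the argument of $K_\Omega(z,\bar w)$ is nearly constant on this product region. I would handle this via a parabolic rescaling centered at $z_1$: scale the complex-tangential directions by $\delta^{-1}$ and the normal direction by $\delta^{-2}$. Under this scaling, $\delta^{-2}\Phi(z,w)$ converges as $\delta\to 0$ to a smooth nonvanishing limit function on a fixed compact set in rescaled coordinates; hence, by choosing $c$ small enough, we can force the oscillation of $\arg K_\Omega(z,\bar w)$ to be less than, say, $\pi/10$ on $B^\#(z_1,c\delta)\times B^\#(z_2,c\delta)$ (the $b(z,w)\log\Phi$ term, of size $\log(1/\delta)$, is lower order relative to $|\Phi|^{-(n+1)}\approx\delta^{-2(n+1)}$ and is absorbed). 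Once the argument variation is controlled, for $f\geq 0$ supported in $B^\#(z_i,c\delta)$ and $z\in B^\#(z_j,c\delta)$ with $i\neq j$, a triangle-inequality argument yields
\[
|P(f)(z)| \ \gtrsim \ \int |K_\Omega(z,\bar w)|\,f(w)\,dV(w) \ \gtrsim\ V(B^\#(z_i,c\delta))^{-1}\!\!\int_{B^\#(z_i,c\delta)}\!\! f\,dV \ =\ \langle f\rangle^{dV}_{B^\#(z_i,c\delta)},
\]
and relabeling $c\delta$ as $\delta$ delivers the claimed pointwise lower bound.
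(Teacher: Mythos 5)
This lemma is not proved in the paper at all: it is recalled verbatim from \cite{HWW}, so there is no in-text argument to compare against. Your reconstruction follows the route actually used in that reference (and the standard one for such ``reverse'' kernel estimates on strictly pseudoconvex domains): choose $z_2$ separated from $z_1$ by $\approx\delta$ in a complex-tangential direction, use the Fefferman/Boutet de Monvel--Sj\"ostrand expansion to get $|K_\Omega(z,\bar w)|\approx V(B^\#(z_i,\delta))^{-1}$ on the product of the two (shrunken) tents, and control the oscillation of $\arg K_\Omega(z,\bar w)$ there so that no cancellation occurs when integrating against $f\ge 0$. The one step you must write out in full is the argument control, which you correctly identify as the obstacle but only sketch: the bound $\operatorname{Re}\Phi\gtrsim|\Phi|\approx\delta^2$ by itself is \emph{not} sufficient, because $\arg\Phi$ ranging over an interval of length close to $\pi$ allows $\arg\Phi^{-(n+1)}$ to wrap around the circle; one genuinely needs something like $\Phi(z,w)=\Phi(z_1,z_2)\bigl(1+O(c)\bigr)$ uniformly in $\delta$ on $B^\#(z_1,c\delta)\times B^\#(z_2,c\delta)$, which your parabolic rescaling (equivalently, a Taylor expansion of the Levi polynomial in the special coordinates, with remainders uniform over $z_1\in\mathbf b\Omega$) does deliver, with the logarithmic term absorbed as lower order. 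With that step made precise the proof is complete, and the final relabeling of $c\delta$ as $\delta$ is harmless since $c$ is an absolute constant.
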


Recall that $\nu=\sigma^{1/(1-p)}$. By (\ref{5.1}), 
\begin{equation*}
\|P\|_{L^p(\Omega,\sigma dV)}= \|PM_\nu:L^p(\Omega,\nu dV)\to L^p(\Omega,\sigma dV)\|.
\end{equation*}
It suffices to show that $$\sup_{\epsilon_0>\delta>0, z\in \mathbf b\Omega}\langle\sigma \rangle^{dV}_{B^\#(z,\delta)}\left(\langle \nu\rangle^{dV}_{B^\#(z,\delta)}\right)^{p-1}\lesssim \|PM_\nu:L^p(\Omega,\nu dV)\to L^p(\Omega,\sigma dV)\|^{2p}.$$ For simplicity, we set $\mathcal A:=\|PM_\nu:L^p(\Omega,\nu dV)\to L^p(\Omega,\sigma dV)\|$.  If $\mathcal A<\infty$, then we have a weak-type $(p,p)$ estimate:
\begin{equation}\label{3.19}
\sigma\{w\in\Omega:|PM_\nu f(w)|>\lambda\}\lesssim\frac{\mathcal A^{p}}{\lambda^p}\|f\|^p_{L^p(\Omega,\nu dV)}.
\end{equation}
Let $\delta_0$ be a fixed constant so that Lemma \ref{Lem7.1} is true for all $\delta<\delta_0$. Set $f(w)=1_{B^\#(z_1,\delta)}(w)$. Lemma \ref{Lem7.1} implies that for any $z\in B^\#(z_2,\delta)$,
\begin{align}
|PM_\nu1_{B^\#(z_1,\delta)}(z)|=&\left|\int_{ B^\#(z_1,\delta)}K_\Omega(z;\bar w)\nu(w) dV(w)\right|> \langle \nu\rangle^{dV}_{B^\#(z_1,\delta)}.
\end{align} 
It follows that
\begin{equation}
 B^\#(z_2,\delta)\subseteq \{w\in\Omega:|PM_\nu f(w)|>\langle \nu\rangle^{dV}_{B^\#(z_1,\delta)}\}.
\end{equation}
By Lemma \ref{Lem7.0},  $\langle \nu\rangle^{dV}_{B^\#(z_1,\delta)}$ is finite. Then inequality (\ref{3.19}) implies 
\begin{equation}
\sigma(B^\#(z_2,\delta))\leq\mathcal A^p\left(\langle\nu\rangle_{B^\#(z_1,\delta)}^{dV}\right)^{-p}\nu(B^\#(z_1,\delta)),
\end{equation}
which is equivalent to $\langle\sigma\rangle_{B^\#(z_2,\delta)}^{dV}\left(\langle\nu\rangle_{B^\#(z_1,\delta)}^{dV}\right)^{p-1}\lesssim \mathcal A^p$. Since one can interchange the roles of $z_1$ and $z_2$ in Lemma \ref{Lem7.1}, it follows that $$\langle\sigma\rangle_{B^\#(z_1,\delta)}^{dV}\left(\langle\nu\rangle_{B^\#(z_2,\delta)}^{dV}\right)^{p-1}\lesssim \mathcal A^p.$$  Combining these two inequalities, we have
\begin{equation}\label{3.25}
\langle\sigma\rangle_{B^\#(z_1,\delta)}^{dV}\left(\langle\nu\rangle_{B^\#(z_2,\delta)}^{dV}\right)^{p-1}\langle\sigma\rangle_{B^\#(z_2,\delta)}^{dV}\left(\langle\nu\rangle_{B^\#(z_1,\delta)}^{dV}\right)^{p-1}\lesssim \mathcal A^{2p}.
\end{equation}
By H\"older's inequality,
\begin{equation}
V(B^\#(z_2,\delta))^{p}\leq \int_{B^\#(z_2,\delta)}\sigma dV\left(\int_{ B^\#(z_2,\delta)}\nu dV\right)^{p-1}.
\end{equation}
 Therefore $\langle\sigma\rangle_{B^\#(z_2,\delta)}^{dV}\left(\langle\nu\rangle_{B^\#(z_2,\delta)}^{dV}\right)^{p-1}\gtrsim 1$. Applying this to (\ref{3.25}) and taking the supremum of the left side of (\ref{3.25}) for all tents $B^\#(z_1,\delta)$ where $\delta<\delta_0$ yields
\begin{equation}\label{3.27}
\sup_{\substack{\delta<\delta_0,\\z_1\in \mathbf b\Omega}}\langle \sigma\rangle_{B^\#(z_1,\delta)}^{dV}\left(\langle\nu\rangle_{B^\#(z_1,\delta)}^{dV}\right)^{p-1}\lesssim \mathcal A^{2p}.
\end{equation}
Since the constant $\epsilon_0$ in Lemma \ref{lem3.2} can be chosen to be $\delta_0$, inequality (\ref{1.2}) is proved.

Now we turn to prove (\ref{1.3}) and assume in addition that $\Omega$ is Reinhardt. Since inequality (\ref{3.27}) still holds true, it suffices to show
\begin{equation}\label{7.8}
\langle \sigma\rangle_{\Omega}^{dV}\left(\langle\nu\rangle_{\Omega}^{dV}\right)^{p-1}\lesssim \mathcal A^{2p}.
\end{equation}
Because $\Omega$ is Reinhardt,  the monomials form a complete orthogonal system for the Bergman space $A^2(\Omega)$. Thus the kernel function $K_\Omega$ has the following series expression:
\begin{align}
K_\Omega(z;\bar w)=\sum_{\alpha\in \mathbb N^n} \frac{z^\alpha \bar w^\alpha}{\|z^\alpha\|^2_{L^2(\Omega)}}.
\end{align}
This implies that $K_\Omega(z;0)=\|1\|^{-2}_{L^2(\Omega)}$ for any $z\in\Omega$. By either the asymptotic expansion of $K_\Omega$ \cite{Fefferman, Monvel} or Kerzman's Theorem \cite{Kerzman}, we can find  a precompact neighborhood $U$ of the origin such that  for any $z\in \Omega$ and $w\in U$,
\begin{align}
|K_\Omega(z;\bar w)|\approx 1 \;\;\;\;\text{ and }\;\;\;\;\arg\{K_\Omega(z;\bar w), K_\Omega(z;0)\}\in [-1/4,1/4].
\end{align}
Let  $f(w)=1_U\left(w\right)$. Then for any  $z\in \Omega$,
\begin{align*}&\left|PM_\nu(f)\left(z\right)\right|=\left|\int_{U}K_\Omega(z;\bar w)\nu dV(w)\right|> c\|f\|_{L^1(\Omega,\nu dV)},\end{align*} for some constant $c$.
Therefore,
\begin{equation*}
\Omega\subseteq\left\{z\in \Omega:|PM_\nu(f)(z)|>c\|f\|_{L^1(\Omega,\nu dV)}\right\}.
\end{equation*}
Applying this containment and  the fact that $\|f\|_{L^1(\Omega,\nu dV)}=\|f\|^p_{L^p(\Omega,\nu dV)}$ to (\ref{3.19})  yields
\begin{equation}\label{7.111}
\sigma(\Omega)\leq\frac{\mathcal A^p}{c^p\|f\|^p_{L^1(\Omega,\nu dV)}}\|f\|^p_{L^p(\Omega,\nu dV)}\leq \frac{\mathcal A^p}{c^p\|f\|^{p-1}_{L^1(\Omega,\nu dV)}}<\infty.
\end{equation}
Thus \begin{align}\label{7.12}\langle\sigma\rangle^{dV}_\Omega\left(\langle\nu\rangle^{dV}_U\right)^{p-1}\lesssim \mathcal A^p.\end{align} Interchanging the role of $z$ and $w$ in the argument above, we also have
\begin{equation*}
U\subseteq\left\{w\in \Omega:|PM_\nu(1)(w)|>c\|1\|_{L^1(\Omega,\nu dV)}\right\},
\end{equation*}
and 
\begin{equation}\label{7.112}
\sigma(U)\leq\frac{\mathcal A^p}{c^p\|1\|^p_{L^1(\Omega,\nu dV)}}\|1\|^p_{L^p(\Omega,\nu dV)}\leq \frac{\mathcal A^p}{c^p\|1\|^{p-1}_{L^1(\Omega,\nu dV)}}<\infty.
\end{equation}
Thus 
\begin{align}\label{7.13}\langle\sigma\rangle^{dV}_U\left(\langle\nu\rangle^{dV}_\Omega\right)^{p-1}\lesssim \mathcal A^p.\end{align}
Combining (\ref{7.12}), (\ref{7.13}) and using the fact that $$\langle\sigma\rangle^{dV}_U\left(\langle\nu\rangle^{dV}_U\right)^{p-1}\geq 1,$$ we obtain the desired estimate: \begin{align}\label{7.15}\langle\sigma\rangle^{dV}_\Omega\left(\langle\nu\rangle^{dV}_\Omega\right)^{p-1}\lesssim\langle\sigma\rangle^{dV}_\Omega\left(\langle\nu\rangle^{dV}_\Omega\right)^{p-1}\langle\sigma\rangle^{dV}_U\left(\langle\nu\rangle^{dV}_U\right)^{p-1}\lesssim \mathcal A^{2p}.\end{align}
Estimates (\ref{7.15}) and (\ref{3.27}) then give (\ref{1.3}). The proof is complete.

\section{An application to the weak $L^1$ estimate}
 In \cite{McNeal3}, the weak-type $(1,1)$ boundedness of the Bergman projection on  simple domains was obtained using a Calderon-Zygmund type decomposition. In this section, we  use Theorem \ref{4.6} to provide an alternative approach to establish the weak-type bound for the Bergman projection. We follow the argument in \cite{CACPO17} since we have a ``sparse domination" for the Bergman projection.
\begin{thm}
There exists a constant $C>0$ so that for all $f \in L^1(\Omega),$
$$\sup_{\lambda} \lambda V(\{z: |Pf(z)|>\lambda\})< C \|f\|_{L^{1}(\Omega)}.$$

\begin{proof}
By a well-known equivalence of weak-type norms (see for example \cite{Grafakos}), it suffices to show
\begin{equation}\label{8.1} \sup_{\substack {f_1 \\ \|f_1\|_{L^1(\Omega)}=1}} \sup_{G \subset \Omega} \inf_{\substack{G' \subset G \\ V(G)< 2 V(G')}} \sup_{\substack{f_2 \\ |f_2| \leq 1_{G'}}} |\langle P f_1,f_2 \rangle| < \infty. \end{equation}
In light of Theorem \ref{4.6}, we may replace $P$ by $Q^+_{\ell_0,1}$ (using our previous notation) for some fixed $\ell_0$ with $1\leq \ell \leq N$. As in Definition \ref{de3.12}, we consider the (now unweighted) dyadic maximal function $\mathcal{M}_{\mathcal T_{\ell_0},1}$. For convenience in what follows, we will simply write $\mathcal M_{\mathcal T_{\ell_0}}$. By Lemma \ref{lem3.12}, we know this operator is of weak-type $(1,1)$. Fix $f_1$ with norm $1$, $G \subset \Omega$ and constants $C_1$, $C_2$ to be chosen later. Define sets 
$$H=\{z \in \Omega: \mathcal M_{\mathcal T_{\ell_0}}f_1(z)>C_1 V(G)^{-1}\}$$
and
$$\tilde{H}= \bigcup_{\hat{K}_j^k \in \mathcal{K}} \hat{K}_j^k$$
where
$$\mathcal{K}=\left\{\text{maximal tents $\hat{K}_j^k$ in $\mathcal{T}_{\ell_0}$ so $V(\hat{K}_j^k \cap H)>C_2 V(\hat{K}_j^k)$}\right\}.$$
Note if $C_1$ is chosen sufficiently large relative to $C_2^{-1}$, the weak-type estimate of $\mathcal M_{\mathcal T_{\ell_0}}$ implies
\begin{align*}
V(\tilde{H})& =V\left( \bigcup_{\hat{K}_j^k \in \mathcal{K}} \hat{K}_j^k \right)\\
& \leq \sum_{\hat{K}_j^k \in \mathcal{K}} C_2^{-1}V(\hat{K}_j^k \cap H)\\
& \leq C_2^{-1} V(H)\\
& \leq C_2^{-1} C_1^{-1} V(G) \|f_1\|_{L^1(\Omega)}\\
& \leq \frac{1}{2} V(G).
\end{align*}
It is then clear if we let $G'=G \setminus \tilde{H}$, then $V(G)<2 V(G')$, so $G'$ is a candidate set in the infimum in \eqref{8.1}.
If $z \in H^c$, then, by definition,
\begin{equation}\mathcal M_{\mathcal T_{\ell_0}} f_1(z) \leq C_1 V(G)^{-1}. \end{equation} \label{8.2}
Using the distribution function,
\begin{align}
\|\mathcal {M}_{\mathcal T_{\ell_0}} f_1\|^{2}_{L^{2}(H^c)}&=2\int_{0}^{ C_1 V(G)^{-1}} tV(\{z\in H^c:\mathcal {M}_{\mathcal T_{\ell_0}} f_1(z)>t\}) dt\nonumber\\&\leq 2\int_{0}^{ C_1 V(G)^{-1}}dt\|\mathcal {M}_{\mathcal T_{\ell_0}} \|_{L^{1,\infty}(H^c)}\|f_1\|_{L^1(\Omega)}\nonumber\\
& \lesssim C_1V(G)^{-1}\label{8.4}.
\end{align} 
Now let $|f_2 |\leq 1_G$ be fixed. We have
\begin{equation} |\langle Q_{\ell_0,1}^{+} f_1, f_2 \rangle|= \sum_{\hat{K}_j^k \in \mathcal T_{\ell_0}} V(\hat{K}_j^k) \langle f_1 \rangle_{\hat{K}_j^k} \langle f_2 \rangle_{\hat{K}_j^k}.  \label{8.5} \end{equation} 
Note that for $\hat{K}_j^k \in \mathcal T_{\ell_0}$, if $V(\hat{K}_j^k \cap H)> C_2 V(\hat{K}_j^k)$ then $\hat{K}_j^k \subset \tilde{H}$. But $f_2$ is supported on $G' \subset \tilde{H}^c$, so for such a tent $\langle f_2 \rangle_{\hat{K}_j^k}=0$. Thus, examining \eqref{8.5}, we may assume without loss of generality that if $\hat{K}_j^k \in \mathcal T_{\ell_0}$ then
 \begin{equation} V(\hat{K}_j^k \cap H)\leq C_2 V(\hat{K}_j^k). \label{8.6}\end{equation} 
Then note that \eqref{8.6} implies the following holds true for the kubes $K_j^k$, provided $C_2$ is chosen sufficiently small:
\begin{align*}
V(K_j^k \cap H^c) & = V(K_j^k)- V(K_j^k \cap H)\\
& \geq CV(\hat{K}_j^k)-V(\hat{K}_j^k \cap H)\\
& \gtrsim V(\hat{K}_j^k)\\
& \geq V(K_j^k)
\end{align*} 
where we let $C$ be the implicit constant in Lemma \ref{3.10}. Thus we have
\begin{equation} V(K_j^k) \lesssim V(K_j^k \cap H^c). \label{8.7} \end{equation}
Therefore, continuing from \eqref{8.5} and using \eqref{8.4} and \eqref{8.7}, we obtain

\begin{align*}
|\langle Q_{\ell_0,1}^+f_1,f_2 \rangle| & \lesssim 
\sum_{\hat{K}_j^k \in \mathcal T_{\ell_0}} V(K_j^k) \langle f_1 \rangle_{\hat{K}_j^k} \langle f_2 \rangle_{\hat{K}_j^k} \\
& \lesssim \sum_{\hat{K}_j^k \in \mathcal T_{\ell_0}} V(K_j^k \cap H^c) \langle f_1 \rangle_{\hat{K}_j^k} \langle f_2 \rangle_{\hat{K}_j^k}\\
& \leq \int_{H^c}(\mathcal M_{\mathcal T{\ell_0}}f_1)(\mathcal M_{\mathcal T{\ell_0}}f_2) \mathop{dV}\\
& \leq \|\mathcal M_{\mathcal T_{\ell_0}}f_1\|_{L^{2}(H^c)} \|\mathcal M_{\mathcal T_{\ell_0}}f_2\|_{L^2(\Omega)}\\
& \lesssim V(G)^{-\frac{1}{2}} \|f_2\|_{L^2(\Omega)}\\
& \leq V(G)^{-\frac{1}{2}} V(G)^{\frac{1}{2}}\\
& = 1,
\end{align*}
which establishes the result.

\end{proof}
\end{thm}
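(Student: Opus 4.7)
The plan is to combine the pointwise sparse domination of Theorem \ref{thm4.6} with a bilinear duality characterization of the weak $L^1$ endpoint, mirroring the approach in \cite{CACPO17}. First, I would invoke the standard equivalence (see e.g.\ \cite{Grafakos}) that reduces proving $\sup_\lambda \lambda V(\{|Pf|>\lambda\}) \lesssim \|f\|_{L^1}$ to showing
\[
\sup_{\|f_1\|_{L^1(\Omega)}=1}\ \sup_{G\subset\Omega}\ \inf_{\substack{G'\subset G\\V(G)<2V(G')}}\ \sup_{|f_2|\le 1_{G'}} |\langle P f_1,f_2\rangle| < \infty.
\]
Since Theorem \ref{thm4.6} bounds $|K_\Omega(z;\bar w)|$ pointwise by $V(\Omega)^{-1}1_{\Omega\times\Omega} + \sum_{\ell=1}^N \sum_{\hat K_j^k\in\mathcal T_\ell}V(\hat K_j^k)^{-1} 1_{\hat K_j^k\times\hat K_j^k}$, and since the first term corresponds to an operator trivially of weak type $(1,1)$, it suffices to prove the displayed bilinear estimate with $P$ replaced by any of the finitely many dyadic sparse operators $Q^+_{\ell_0,1}$ defined in Section 5.

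Next, I would fix $f_1$ with $\|f_1\|_{L^1(\Omega)}=1$ and $G\subset\Omega$, and design the competitor $G'$ using the dyadic maximal operator $\mathcal M_{\mathcal T_{\ell_0}}$ (the unweighted version of Definition \ref{de3.12}). Set
\[
H = \{z\in\Omega : \mathcal M_{\mathcal T_{\ell_0}}f_1(z) > C_1 V(G)^{-1}\},
\]
and let $\tilde H = \bigcup\{\hat K^k_j : \hat K^k_j \text{ is a maximal tent with } V(\hat K^k_j\cap H)>C_2 V(\hat K^k_j)\}$. Using the disjointness of the maximal tents and the weak $(1,1)$ bound for $\mathcal M_{\mathcal T_{\ell_0}}$ from Lemma \ref{lem3.12}, one obtains
\[
V(\tilde H) \le C_2^{-1} V(H) \lesssim C_2^{-1}C_1^{-1}V(G),
\]
which is smaller than $V(G)/2$ if $C_1,C_2^{-1}$ are chosen large enough. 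Setting $G' = G\setminus \tilde H$ gives an admissible competitor.

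For $f_2$ supported in $G'$, I would expand the pairing as
\[
|\langle Q^+_{\ell_0,1} f_1, f_2\rangle| = \sum_{\hat K^k_j\in\mathcal T_{\ell_0}} V(\hat K^k_j)\, \langle f_1\rangle_{\hat K^k_j}\, \langle f_2\rangle_{\hat K^k_j}.
\]
Tents contained in $\tilde H$ contribute zero because $f_2$ vanishes on $\tilde H$, so we may assume $V(\hat K^k_j\cap H)\le C_2 V(\hat K^k_j)$ for every surviving tent. Using $V(\hat K^k_j)\approx V(K^k_j)$ from Lemma \ref{3.10} and choosing $C_2$ sufficiently small, one derives $V(K^k_j)\lesssim V(K^k_j\cap H^c)$. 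Combined with the disjointness of the kubes $K^k_j$, this lets me dominate the sum by
\[
\int_{H^c} \bigl(\mathcal M_{\mathcal T_{\ell_0}}f_1\bigr)\bigl(\mathcal M_{\mathcal T_{\ell_0}}f_2\bigr)\,dV \le \|\mathcal M_{\mathcal T_{\ell_0}}f_1\|_{L^2(H^c)} \|\mathcal M_{\mathcal T_{\ell_0}}f_2\|_{L^2(\Omega)}.
\]
The first factor is estimated by integrating the distribution function up to the truncation level $C_1 V(G)^{-1}$ and applying weak $(1,1)$ once more, yielding $\lesssim V(G)^{-1/2}$; the second factor is $\lesssim \|f_2\|_{L^2(\Omega)}\le V(G)^{1/2}$ by Lemma \ref{lem3.12} at $p=2$. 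Multiplying gives a bound independent of $G$ and $f_1$, completing the proof.

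The main obstacle is the delicate calibration of the constants $C_1$ and $C_2$: $C_1$ must be large relative to $C_2^{-1}$ to make $V(\tilde H)<V(G)/2$, while $C_2$ must be small enough to preserve $V(K^k_j\cap H^c)\gtrsim V(K^k_j)$ after subtracting the $C_2$-fraction of bad mass; these constraints must be compatible with the implicit constant in the kube/tent volume comparison of Lemma \ref{3.10}. Verifying that the full sparse form (summing over all $N$ collections $\mathcal T_\ell$) survives this maneuver is routine once one handles a single $\mathcal T_{\ell_0}$.
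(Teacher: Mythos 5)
Your proposal is correct and follows essentially the same route as the paper's proof: the bilinear weak-type reduction, sparse domination via Theorem \ref{thm4.6}, the exceptional sets $H$ and $\tilde H$ built from the dyadic maximal operator, the kube/tent volume comparison to pass to $H^c$, and the final $L^2$ Cauchy--Schwarz with the truncated distribution-function estimate. No substantive differences to report.
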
 

\section{Directions for generalization}
\paragraph{1} 
The example in Section 6.1 showed the upper bound estimate in Theorem \ref{t:main} is sharp. It is not clear if the lower bound estimates given in Theorem \ref{t:main}, or in \cite{Pott} and \cite{Rahm} are sharp. It would be interesting to see what a sharp lower bound is in terms of the Bekoll\'e-Bonami type constant.
\vskip 5pt
\paragraph{2} Our lower bound estimate in Theorem \ref{t:main1} uses the asymptotic expansion of the Bergman kernel function and hence only works for bounded, smooth, strictly pseudoconvex domains. An interesting question would be whether similar lower bound estimates hold true for the Bergman projection when the domain is of finite type in $\mathbb C^2$, convex and of finite type in $\mathbb C^n$, or decoupled and of finite type in $\mathbb C^n$.
\vskip 5pt
\paragraph{3}  We focus on the weighted estimates for the Bergman projection for the simplicity of the computation. In \cite{Rahm}, Rahm, Tchoundja, and Wick obtained the weighted estimates for operators $S_{a,b}$ and $S^+_{a,b}$ defined by
\begin{align*}
S_{a,b}f(z)&:=(1-|z|^2)^a\int_{ \mathbb B_n }\frac{f(w)(1-|w|^2)^b}{(1-z\bar w)^{n+1+a+b}}dV(w);
\\S^+_{a,b}f(z)&:=(1-|z|^2)^a\int_{ \mathbb B_n }\frac{f(w)(1-|w|^2)^b}{|1-z\bar w|^{n+1+a+b}}dV(w),
\end{align*}
on the weighted space $L^p(\mathbb B_n,(1-|w|^2)^b\mu dV)$. Using the methods in this paper, it is possible to obtain weighted estimates for analogues of $S_{a,b}$ and $S^+_{a,b}$ in the settings we considered in this paper. 
\bibliographystyle{alpha}

\end{document}